\newcommand{\R}{{\Bbb R}}
\newcommand{\N}{{\Bbb N}}
\definecolor{awesome}{rgb}{1.0, 0.13, 0.32}
\definecolor{darkblue}{rgb}{0.0, 0.0, 0.55}
\newtheorem{thm}{Theorem}
\newtheorem{lemma}[thm]{Lemma}
\newtheorem{corollary}[thm]{Corollary}
\newtheorem{definition}[thm]{Definition}
\newtheorem{remark}[thm]{Remark}
\newtheorem{example}[thm]{Example}
\newproof{proof}{Proof}
\newcommand\mL{L\kern-0.08cm\char39}
\definecolor{crimson}{rgb}{0.86, 0.08, 0.24}
\definecolor{burgundy}{rgb}{0.5, 0.0, 0.13}
\begin{document}

\begin{frontmatter}


\title{The peak-and-end rule and differential equations with maxima:  a view on the unpredictability  of happiness}

\author[a]{Elena Trofimchuk}
\author[b]{Eduardo Liz}
\author[c]{Sergei Trofimchuk\corref{mycorrespondingauthor}}
\cortext[mycorrespondingauthor]{\hspace{-1mm} {\it e-mails addresses}: trofimch@imath.kiev.ua (Elena Trofimchuk); eliz@uvigo.es (Eduardo Liz);   trofimch@inst-mat.utalca.cl (Sergei Trofimchuk, corresponding author). \\}
 \address[a]{Department of Differential Equations, Igor Sikorsky Kyiv Polytechnic Institute, Kyiv, Ukraine}
\address[b]{Departamento de Matem\'atica Aplicada II, 
Universidade de Vigo, 36310 Vigo, Spain}
\address[c]{Instituto de Matem\'atica, Universidad de Talca, Casilla 747,
Talca, Chile }

\bigskip

\begin{abstract}
\noindent  In the 1990s, after a series of experiments,   the behavioral psychologist  and  economist    Daniel Kahneman and his colleagues formulated  the following 
Peak-End evaluation rule: {\it  the remembered utility of pleasant or unpleasant episodes is accurately predicted by averaging the Peak (most intense value) of instant utility (or disutility) recorded during an episode and the instant utility recorded near the end of the experience}
(D. Kahneman et al.,  1997,  QJE,  p. 381). Hence, the simplest mathematical model for time evolution of the  experienced utility function $u=u(t)$ can be given by the  scalar  differential equation $u'(t)=a u(t) + b \max \{u(s) : s\in [t-h,t]\}+f(t) \ (*),$
where $f$ represents exogenous stimuli, $h$ is the maximal duration of the experience, and $a,b \in \R$ are some averaging weights. In this work, 
we  study equation $(*)$ and  show that,  for a range of parameters $a, b, h$ and  a periodic sine-like term $f$, the dynamics of $(*)$ can be completely
described in  terms of an associated one-dimensional
dynamical system generated by a piece-wise continuous map
from a finite interval into itself. We illustrate our approach with  two examples. In particular, we show that  the hedonic utility $u(t)$  (`happiness')  can
exhibit chaotic behavior. 
\end{abstract}
\begin{keyword} Peak-and-end rule, differential equations with maxima, return map, complex (chaotic) behavior.  \\
{\it 2010 Mathematics Subject Classification}: {\ 34K13; 34K23; 37E05; 91E45.}
\end{keyword}

\end{frontmatter}

\centerline{\sc To the memory of Anatoly Samoilenko (1938-2020)}

\newpage


\section{Introduction}
\noindent  In the 1990s, after a series of experiments,  the behavioral psychologist  (and Nobel laureate in economics)   Daniel Kahneman with his colleagues formulated  the following 
Peak-End evaluation rule: {\it  the remembered utility of pleasant or unpleasant episodes is accurately predicted by averaging the Peak (most intense value) of instant utility (or disutility) recorded during an episode and the instant utility recorded near the end of the experience} (excerpt from \cite{PER2}, p. 381).  This rule obtained multiple applications (including   customer service,  price setting strategies, medical procedures,  education etc) and nowadays, the Peak-End theory 
has become one of  the active areas of research in the field of behavioral science, e.g. see \cite{CR, PER1,PER2,PER3,PER4,PZ} and references therein. 

Accordingly, the simplest mathematical model for time evolution of the  experienced utility function $u=u(t)$ can be given by the  scalar  differential equation
\begin{equation}\label{meq}
u'(t)=a u(t) + b \max \limits_{s\in [t-h,t]}u(s) +f(t)
\end{equation}
where $f$ represents exogenous stimuli, $h>0$ is the maximal duration of the experience and $a,b$ are some real coefficients. The determination of  qualitatively plausible psychological 
parameters $f, a, b$ seems to be a rather difficult task (which we do not address here); on the other hand, it is  reasonable to consider the case when  $f$ is a  $T$-periodic continuous sine-like function (see Definition~\ref{sine-like} below), assuming that  the rate of change of the utility $u(t)$ is affected by linear decay (with coefficient $\alpha >0$) and  is proportional (with coefficient $\beta >0$)  to the difference between its instant value and its peak on the precedent fixed time interval:  
\begin{equation} \label{meqo}
u'(t)=-\alpha u(t) + \beta  (u(t)-\max \limits_{s\in [t-h,t]}u(s)) +f(t).
\end{equation}
Akin evolutionary rules, with the term $u(t-h)$ instead of $\max \{u(s):s\in [t-h,t]\}$, can be found in other comparable cases: see, for instance,  the celebrated Kalecki difference-differential equation describing a macroeconomic model of business cycles \cite{RF,K35,AAK} or the mathematical model of emotional balance dynamics proposed in \cite{TB}.  The works \cite{BEW, Er} show how the psychology of agents trading the foreign currency generates a similar dynamical mechanism expressed by the equation 
 $$
 u'(t)=-b|u(t)|u(t)+ a(u(t)-u(t -1)), \quad a,b >0. 
 $$
Comparing equations (\ref{meq}) and (\ref{meqo}), we obtain that $a = \beta - \alpha,\  b =- \beta$. In this way, the situation when $b <0$ and $a+b <0$ might appear as  more interesting from the applied point of view, and, as we  manifest in the present paper, it is certainly more interesting by its mathematical implications. 
In particular,  we will show that for a range of parameters $a, b, h$ and a periodic  term $f$, the dynamics of  (\ref{meq})  can exhibit chaotic behavior.

Even assuming that equation (\ref{meqo})  is a phenomenological model, our study could be considered as another attempt to use mathematics to understand the behaviour of happiness, a topic that goes back at least to Edgeworth's calculus of pleasure or ``hedonimetry" in 1881 \cite{Edge}.
Indeed,  `Happiness' \cite{G} is one of the possible interpretations of the experienced utility, and, from their own individual experience, everyone knows that 
happiness is unpredictable \cite{JG}. Remarkably, there exist well-documented descriptions of visibly  chaotic time evolution of  happiness \cite{th}, see Figure~\ref{Fig3b} and compare it with a numerical solution  obtained for a particular case of (\ref{meq}) in Figure~\ref{Fig3}.

 \begin{figure}[htb] 
\centering 
\includegraphics[width=10cm]{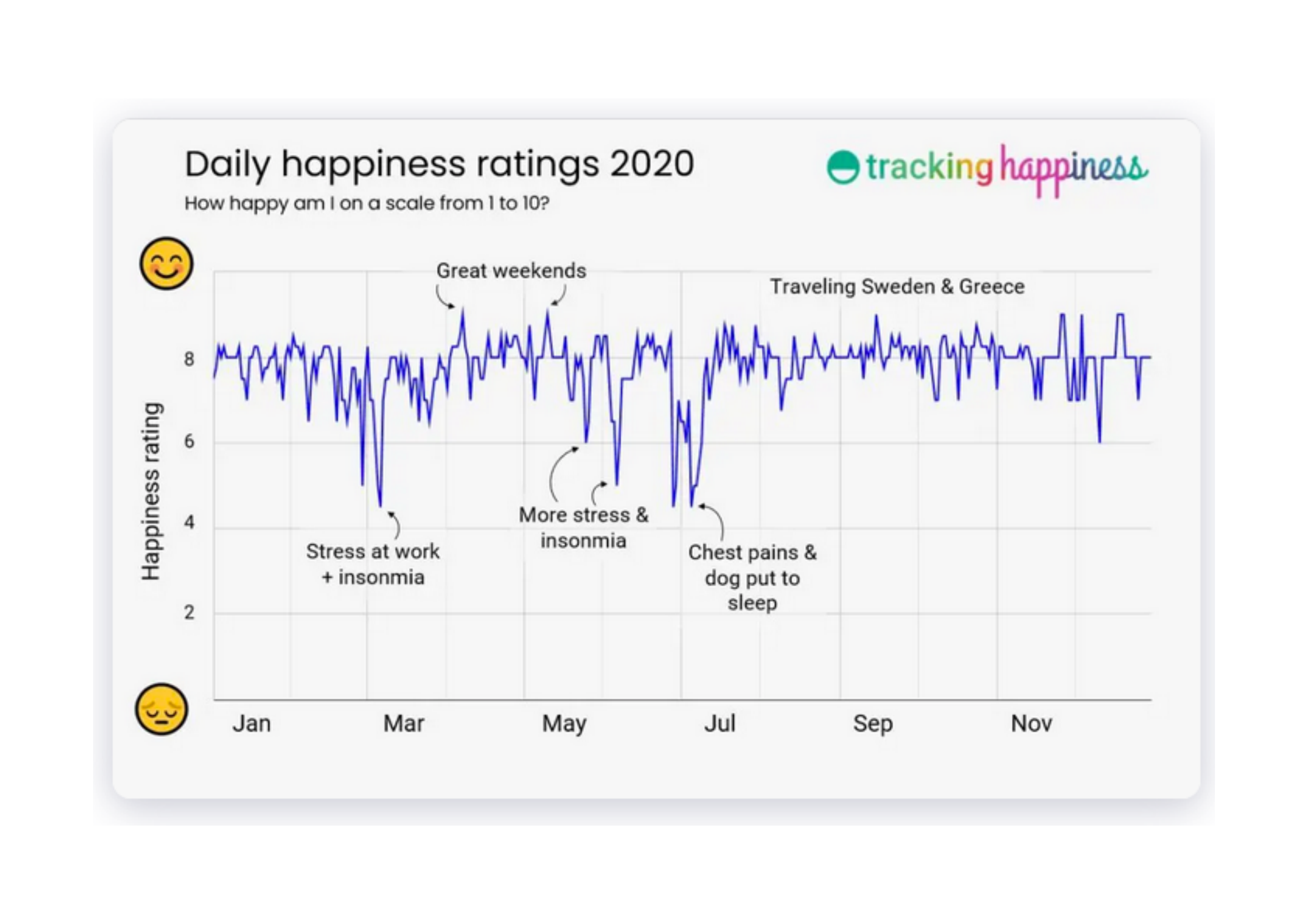}
\caption{\hspace{0cm}   The 2020 Happiness review  by H. Huijer  taken from \cite{th}. The figure is published with the kind permission of Hugo Huijer.} 
\label{Fig3b}
\end{figure}

In any event, the main goal of our studies is the elaboration of a satisfactory mathematical framework to deal with 
the quasilinear functional differential equation (\ref{meq}). As far as we know, the first article dedicated to equations with maxima  appeared in 1964 \cite{Pe} and in the survey \cite[Section 12]{my} on the theory
of functional differential equations, A. Myshkis
singled out systems with maxima as differential
equations with deviating argument of complex structure. Particularly he noted that {\it`the specific character of these questions is not yet sufficiently clear'} \cite[p. 199]{my}.  Denote by $C[-h,0]$ the set of continuous functions from $[-h,0]$ to $\R$. We notice that  the functional $f: \R \times C[-h,0] \to \R$ defined by $f(t,\phi)= a\phi(0) + b \max\{\phi(s):s\in [-h,0]\} +f(t)$, which corresponds to  the right-hand side of (\ref{meq}), is globally Lipshitzian in $\phi$, 
which guarantees the existence, uniqueness, global continuation and continuous dependence on initial data of the solutions to (\ref{meq}). However, this functional is not differentiable in $\phi$. By using the representation $\max\{\phi(s), s\in [-h,0]\} = \phi(-\tau(\phi))$ for some $\tau(\phi) \in [0,h]$, we see that (\ref{meq}) can be considered as a functional differential equation with state-dependent delay. Note that function $\tau: C[-h,0] \to [0,h]$ is clearly discontinuous at each constant element.\footnote{Even though $\tau(\phi)$ is not uniquely defined.}    The above considerations show that an appropriate functional space for  the evolutionary system (\ref{meq})  should be the space of continuously differentiable functions $C^1[-h,0]$ instead of $C[-h,0]$, cf. \cite{PT}. 

We will call (\ref{meq}) the Magomedov equation, in honor of the Azerbaijani mathematician who introduced this model in the late 70s 
and  since then has analyzed several  particular cases of it with periodic forcing term $f(t)$, see  \cite{BH,17,Mago,STB,SB}.  In his   monograph \cite{Mago} dedicated to equations with maxima, Magomedov explains how the periodic equation (\ref{meq}) can be used 
for modeling automatic control of voltage in a generator of constant current, see   \cite[pp. 4--7]{Mago}. 

Besides the above mentioned applications, the periodic equation (\ref{meq}) plays an important role in the stability theory for the  
delay differential equation
$$u'(t) = au(t)+ bf(t, u_t),$$
where $a, b<0$, $u_t(s)=u(t+s),\  s\in[-h,0]$, and the continuous functional $f: \R\times C[-h,0] \to \R$ satisfies either 
the following (sublinear) Yorke condition \cite{Y}
\begin{eqnarray}
-\max_{s \in [-h,0]}(- \phi(s)) \leq f(t, \phi) \leq  \max_{s \in [-h,0]} \phi(s), \
t \geq 0, \ \phi \in C[-h,0],
\label{73}
\end{eqnarray}
or its generalized (nonlinear) version introduced in \cite{ltt}. In this context,  model (\ref{meq}) is used as a key test 
equation whose analysis determines the optimal stability regions for equations satisfying  one of the  aforementioned Yorke conditions. 
For instance, in the simplest situation when $a=0$,  
equation (\ref{meq}) has a uniformly asymptotically stable 
periodic solution for every periodic function $f(t)$ if and only if $0 < -bh < 3/2$
(that constitutes a variant of the so-called Myshkis-Wright-Yorke 3/2-stability criterion, see  \cite{FLOT,kr, YW,PT}). 

Among other mathematical objects closely related to equation (\ref{meq}), we would like to mention the Hausrath equation
$$u'(t)= b(\max_{s \in [t-h,t]}|u(s)|-u(t)),$$
analyzed in \cite[pp. 73-74]{HD}  and   the Halanay  inequality 
$$u'(t)\leq a u(t)+b\max_{s \in [t-h,t]}u(s)$$
which  became an important tool in the stability theory of 
functional differential equations, see \cite{BT,17,ILT,H2} for the further references.

The present work extends previous studies  \cite{17, PT} where, in particular,
the existence of multiple periodic solutions to equation  (\ref{meq}) was established by 
using Krasnoselsky's rotation number and introducing a  substitute of the variational equation for  the non-smooth model (\ref{meq}). 
Our approach in this paper is cardinally different,  its workhorse is  an associated 
selfmap $\cal R$ of an interval called `the return map' in the paper. This function
allows us to reproduce the sequence of consecutive `qualified'  local maxima $q_j>q$ (i.e. having the property 
$u(q_j,p) = \max_{s \in [q_j-h,q_j]}u(s,p)$) of 
each solution $u(t,p)$ to (\ref{meq}) with initial condition $u(s,p)=p,$ $s\in[q-h,q]$ (actually, we will define $\cal R$  by ${\cal R}(p) = u(q_1,p)$). As we will show, the information stored in $\cal R$ is well enough to  describe the dynamics in (\ref{meq}). Now, analysing 
the dependence of the `qualified'  local maximum $u(q_1,p)$  on $p$, one can observe that at some specific values of 
$p$ this maximum disappears due to a cusp catastrophe.  
Accordingly, the return map $\cal R$ has a discontinuity  at each such point so that important efforts in Section 2 are focused on  the studies of the continuity and differentiability properties of  $\cal R$. 
In particular, while computing the derivative  ${\cal R}'(p)$, we have found another interpretation of the  aforementioned variational equation for (\ref{meq}).

Finally, in Section 3 we  show that, in spite of  the uniqueness of $T$-periodic solution to
(\ref{meq}) for all sufficiently
small and large values of $hT^{-1}$, in general 
equation  (\ref{meq}) possesses a global attractor ${\cal A}$ 
with rather complicated dynamical structure.\footnote{Curiously, the first working hypothesis  about equation (\ref{meq}) was that, due to the positive homogeneity 
of the $\max$-functional,  this equation  has a unique periodic solution for all choices of $a+b\not=0$ and periodic functions $f(t)$. Thus,   the possibility of complicated dynamics in  (\ref{meq}) was quite surprising  for the authors.}
Indeed, for a wide range of parameters $a, b$, the restriction of the map $\cal R$ on an appropriate compact subset of its continuity domain has a generalised horseshoe. 
Precisely this fact implies the existence of an infinite number of different periodic solutions to (\ref{meq}) as well as sensitive dependence on the initial values (chosen in some subset of continuous functions).
Our  example in Subsection \ref{sec:chaos} extends a relatively small set of delay differential equations coming from applications where 
the existence of `chaotic' behaviour has been proved analytically, cf. \cite{walther} and its references.   As usual, this requires elementary but laborious evaluations of some auxiliary smooth 
functions on compact sets. This work is realised in an Appendix. 

\section{Associated one-dimensional dynamics}

\subsection{Some properties of the solutions to (\ref{meq})}
\label{sec:prop}

For $a+b \not=0 $, let us consider the following family of  initial value problems
for periodic functional  differential equations:
\begin{eqnarray}
u'(t)=  a u(t) + b\max_{s\in[t-h,t]} u(s) + f(t+\tau), \ \tau \in \R,
\label{1}\\ \noalign{\medskip}
u(s+\tau)= \phi(s),  s \in [-h,0], \ \phi  \in C:=C[-h,0].
\label{2}
\end{eqnarray}
If $T$ is the minimal period  of $f$, it suffices to
consider values  $\tau \in [0,T)$. Identifying the points $0$ and $T$, we
can replace this interval with the circle $S^1$.  This means that, once $a, b, f$ are fixed in (\ref{1}),  we
can identify each pair (\ref{1}), (\ref{2})
with the point $x=(\tau, \phi)$ from the phase space
$X = S^1 \times C$.

Let $u(\cdot,\tau, \phi):[\tau - h, +\infty) \to \R$
be the solution to (\ref{1}),(\ref{2}). For every $\mu \geq 0$  and $(\tau, \phi) \in X$,
we consider the function $\psi(s) = u(\tau + \mu +s ,\tau, \phi),\, s\in[-h,0],$ and the
representation $\tau + \mu = \tau_1 + kT, \ \tau_1 \in [0,T)$.  For each $\mu\in\R_+$, we define the application
$F^{\mu}: X \to X$ by $F^{\mu}(\tau, \phi)=(\tau_1, \psi)$.
By  definition, $\tau_1 = (\tau + \mu) \pmod{T}$, $F^0 = Id$ and $F^{\mu}\circ
F^{\nu}= F^{\mu+ \nu}$ for all $\mu, \nu \geq 0$.  Here, for $x\in\R$, we 
define $x \pmod{T} \in [0,T)$ as the unique real number in $[0,T)$
such that $x = x\pmod{T} + kT$ for some 
integer $k$.

Moreover, the continuous
dependence of the solution  $u(t,\tau, \phi)$ on parameters $(\tau, \phi)$
implies that the map $F: X \times \R_+ \to X$ defined by $F(\tau, \phi,\mu)=F^{\mu}(\tau, \phi)$ is  continuous. Hence,
$F$ determines a skew-product  semidynamical system with
$C$ as the fibre space and $S^1$ as the base space.

In this subsection we  identify  a  subset of parameters $(a,b) \in \R^2$
for which $F^{\mu}$ has a compact global attractor ${\cal A}(F)$ (i.e.,  
a compact invariant connected subset of $X$ attracting every 
trajectory of the dynamical system). In view of  J. Hale's general theory in  
\cite{HMO},  ${\cal A}(F)$
attracts all bounded sets of $X$ and ${\cal A}(F) = \cap_{\mu \geq 0}F^{\mu}{\cal K}$
where ${\cal K}$ is any compact set which attracts all compact sets of $X$.
Note that the case $f\equiv0$ was  already studied in \cite{PT}, where
a criterion for the equality
${\cal A}(F) = S^1 \times \{0\}$ was  established.

We will consider sine-like $T$-periodic functions in the sense of the following definition:
\begin{definition}
\label{sine-like}
We say that a T-periodic continuous function $f:\R\to\R$ has sine-like shape if
there exist $t_0, t_1$ such that $0<t_1-t_0<T$, $f$ is strictly monotone on $[t_0,t_1]$ and on $[t_1,t_0+T]$, and $t_1$ is a turning point of $f$.
\end{definition}

Our proof of the existence
of a compact global attractor is based on the following lemmas
describing some properties of the solutions to equation  (\ref{meq}):
\begin{lemma}\label{L1} Assume that $h <T$,  $a+b \not=0 $ and the T-periodic continuous function $f$ has sine-like shape.
Let 
$u: [\alpha, + \infty) \to \R$ be 
a solution to (\ref{meq}). Then at least one of the following
options is satisfied:
\begin{enumerate}
\item[1)] there exists $\tau > \alpha$ such that $u$ strictly
increases on $[\tau, +\infty)$ and  $u(t) \to + \infty$ as
$t \to +\infty$;
\item[2)] there  exists $\tau_1 > \alpha + h$ such that
$U(t) := \max\limits_{s \in [t-h,t]}u(s)$ decreases
on $[\tau_1, +\infty)$ and $u(t) \to - \infty$ as
$t \to +\infty$;
\item[3)] there exist $\tau_2 > \alpha +h$ and $\varepsilon > 0$
such that
$\max\limits_{s \in [\tau_2-h,\tau_2 + \varepsilon]}u(s) = u(\tau_2)$.
\end{enumerate}
\end{lemma}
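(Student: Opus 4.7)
I argue by contradiction, assuming none of (1), (2), (3) holds, and study the auxiliary continuous sliding maximum $U(t):=\max_{s\in[t-h,t]}u(s)$. A short structural observation is that $U$ can only strictly increase at points $t$ of the coincidence set $V:=\{t:u(t)=U(t)\}$: on $V^c$, the current sliding maximum is realised at some $\sigma(t)\in[t-h,t)$ and $U$ stays equal to $u(\sigma(t))$ until $\sigma(t)$ exits the window, after which $U$ decreases.

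The first step is to show that any ``hill'' of $U$ in $(\alpha+h,\infty)$, i.e.\ a triple $t_1<t_2<t_3$ with $U(t_1)<U(t_2)>U(t_3)$, produces a point fulfilling (3). Let $M:=\max_{[t_1,t_3]}U$, attained at some interior point; then $u<M$ on $[t_1-h,t_1]$ (via $u(s)\le U(t_1)<M$) and $u\le M$ on $[t_1,t_3]$ (via $u\le U\le M$). Picking $\tau_2\in[t_1,t_3]$ with $u(\tau_2)=M$, and (by choosing the hill late and long enough) $[\tau_2-h,\tau_2+\varepsilon]\subseteq[t_1-h,t_3]$ for some $\varepsilon>0$, yields (3). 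Consequently, if (3) fails, $U$ has no hill eventually, and by a standard continuity argument $U$ is monotone on some ray $[\tau_1,\infty)$.

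It remains to dispose of the two monotonicity cases using the equation together with the sine-like shape of $f$. If $U$ is eventually non-increasing with $L_0:=\lim U\in[-\infty,+\infty)$, then $L_0=-\infty$ immediately gives $u(t)\le U(t)\to-\infty$, hence (2), a contradiction, while $L_0>-\infty$ is ruled out as follows: the equation $u'=au+bU+f(\cdot+\tau_0)$ with $U\to L_0$ and $u$ bounded forbids $u$ from converging (otherwise $u'\to 0$ while $(a+b)L_0+f(t+\tau_0)$ keeps oscillating nontrivially, where $a+b\ne 0$ and the sine-like non-constancy of $f$ are essential), so $\limsup u=L_0$ is approached along local maxima of $u$; an iterative argument (replacing each non-qualified local max by the argmax of $u$ on its preceding $h$-window, terminating due to boundedness and the initial-segment bound) then yields a local max of $u$ lying in $V$, which by the Step~1 argument gives (3), a contradiction. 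If $U$ is eventually non-decreasing with $U\to+\infty$, then $u$ is unbounded above; using the first-attainer $\sigma(n):=\min\{s\in[\alpha,\alpha+n]:u(s)=\max_{[\alpha,\alpha+n]}u\}$, either $\sigma(n)<\alpha+n$ for some large $n$ (so (3) holds at $\tau_2=\sigma(n)$, $\varepsilon=\alpha+n-\sigma(n)$), or $\sigma(n)=\alpha+n$ for all large $n$, forcing $u$ eventually strictly increasing and, being unbounded, $u\to+\infty$, i.e.\ (1). The remaining subcase $U\to L_0$ finite is handled as in the non-increasing branch.

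The most delicate part of the argument is the finite-$L_0$ subcases in Step~2, where the sine-like, non-constant character of $f$ together with the hypothesis $a+b\ne 0$ are indispensable to obstruct an asymptotic stabilisation of $u$ compatible with the limit of $U$: without these structural assumptions $u$ could smoothly approach a constant and the trichotomy would fail.
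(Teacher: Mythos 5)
Your Step 1 (a ``hill'' of $U$ yields option 3) and your treatment of the unbounded branches ($U\to-\infty$ gives option 2; $U\to+\infty$ gives option 1 or 3 via the leftmost-argmax device) are sound and essentially coincide with cases (III) and (II) of the paper's proof. The genuine gap is in the finite-limit branch, which you yourself flag as the delicate part but then dispatch with an unproven ``iterative argument''. The difficulty is that replacing a non-qualified local maximum $\tau$ of $u$ by the argmax of $u$ on $[\tau-h,\tau]$ produces a point $\tau'$ whose own trailing window $[\tau'-h,\tau']$ sticks out to the left of $[\tau-h,\tau]$, so the candidate can slip backwards indefinitely and the iteration need not terminate at a point satisfying $u(\tau_2)=\max_{s\in[\tau_2-h,\tau_2+\varepsilon]}u(s)$. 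This is not a removable technicality: your argument never uses the hypothesis $h<T$, yet that hypothesis is exactly what blocks the backward slippage. If $u$ were asymptotically a non-constant $T$-periodic profile plus a strictly decreasing tail and one had $h\geq T$, every late local maximum would be dominated inside its own window by the one a period earlier, $U$ would be eventually decreasing with finite limit, and none of the three options would hold; so any correct proof of the finite-limit case must exploit $h<T$ in an essential way.

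The paper closes this gap by a different route: when $U$ is eventually monotone with finite limit $U_*$, it derives the representation $u(t)=p_1(t)+g_1(t)$, where $g_1$ is monotone with $g_1(+\infty)=0$ and $p_1$ is the $T$-periodic solution of the limiting scalar equation $x'=ax+bU_*+f(t)$ (with a separate computation for $a=0$), and then proves via an isocline analysis that $p_1$ is itself sine-like. Since $h<T$, the sliding maximum over windows of length $h$ of a sine-like $T$-periodic function is a non-constant periodic function, so $U$ cannot be monotone --- the desired contradiction. Your observation that $u$ cannot converge (using $a+b\neq0$ and the non-constancy of $f$) is a strictly weaker substitute for this representation and does not by itself locate a qualified maximum. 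To repair the proof you would need to establish the asymptotic sine-like $T$-periodicity of $u$ (or an equivalent quantitative statement placing its late local maxima correctly relative to windows of length $h<T$), which is precisely the content of the paper's case (I).
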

\begin{proof} Consider  the function
$U:(\alpha + h, + \infty) \to \R$
defined in 2). We have the following three alternatives: 

(I)  $U$ is decreasing on some  interval $(\sigma, +\infty)$.  If, in addition  $U(+\infty) =-\infty$, then the second option of the lemma is satisfied.  So, suppose that $U(+\infty) = U_*$ is finite.  Then $u$ satisfies 
the differential equation 
$$
u'(t)=a u(t) + bU_* +b g(t) +f(t),
$$
where $g(t):=U(t)-U_* \geq 0$ for $t \geq \sigma$, $g(+\infty)=0$. 

If, in addition,  $a=0$, then $b\not=0$ and 
$$
u(t) = u(\sigma) + \int_\sigma^t (f(s)-\bar f)ds +b \int_\sigma^t (U_* +b^{-1} \bar f +g(s))ds, \quad \bar f:= T^{-1}\int_0^Tf(s)ds. 
$$
Since,  together with $U$, the solution $u$ is  bounded on $[\sigma, +\infty)$, we obtain   that $U_* +b^{-1} \bar f =0$ and 
\begin{equation}\label{rep}
u(t) = p_1(t) + g_1(t), \ t \geq \sigma, 
\end{equation}
 where  
 $$g_1(t)=C+b \int_\sigma^t g(s)ds$$
 is a bounded monotone function. Clearly, we can choose the real number $C$ in such a way that $g_1(+\infty)=0$.
 Moreover,
$$p_1'(t) = f(t) - f(\theta), \quad \min_{s\in\R} f(s) < f(\theta) < \max_{s\in\R} f(s), $$
 for some fixed $\theta\in [0,T]$, so that $p_1$ has exactly two critical points on each half-closed interval of length $T$. 
Thus $p_1$ is a sine-like $T$-periodic function.  However, since $h <T$, this implies 
that $U(t)$ can not be  monotone, a contradiction. 

Consider now the case when $a\not=0$. Similarly, we find that representation (\ref{rep}) is true in this situation, 
with  $g_1(+\infty)=0$ and $p_1$ being the unique $T-$periodic solution of the equation 
\begin{equation}\label{meqs}
x'(t)=a x(t) + f_1(t), \quad f_1(t):=  bU_*  +f(t).
\end{equation}

This will produce again a contradiction once  it is  established that the $T-$periodic function $p_1$ is sine-like. First consider $a<0$, 
then 
$$
-\frac{f_1(T_2)}{a}=\int^t_{-\infty}e^{a(t-s)} f_1(T_2)ds<  p_1(t)=\int^t_{-\infty}e^{a(t-s)}f_1(s)ds <  -\frac{f_1(T_1)}{a},
$$
where $f_1(T_1)= \max_\R f_1(s), \ f_1(T_2)= \min_\R f_1(s)$ for some $T_1<T_2<T_1+T$.  Therefore the graph of the solution $x$
belongs to the rectangle $[T_1,T_1+T] \times (-a^{-1}f_1(T_2), -a^{-1}f_1(T_1))$ of the extended phase plane. The zero isocline for 
(\ref{meqs}) is given by the graph of $x= -a^{-1}f_1(t)$.  In the open region  below this isocline, the solutions of (\ref{meqs}) are increasing, while they are decreasing above the
zero isocline.  Take any point $P_s=(s,-a^{-1}f_1(s))$ for $s \in (T_1,T_2)$; it is easy to see that each trajectory of (\ref{meqs}) through $P_s$ is strictly decreasing in the backward direction and therefore has a unique intersection with the zero isocline on $[T_1,s]$.    This proves that $x=p_1(t)$ has a unique intersection with $x= -a^{-1}f_1(t)$ on 
the interval $[T_1,T_2]$ (say, at some point $s_*\in (T_1,T_2)$), it is strictly increasing on $[T_1,s_*]$ and  strictly decreasing on some maximal interval $[s_*,s^*]$, 
where $s^* \in (T_2,T_1+T)$ and $p_1(s^*) = - a^{-1}f_1(s^*)$. By the same argument as before, we obtain that $x(t)=p_1(t)$ cannot cross the zero isocline for $t \in (s^*,T_1+T]$
a second time and therefore $p_1$ is strictly increasing on $[s^*,T_1+T]$. This means that $p_1$ has sine-like form.  

To complete the analysis of the first alternative, we should consider  $a>0$. This case can be easily reduced to the previous one since the periodic function $q(t):=p_1(-t)$ satisfies the equation 
$
q'(t)= - aq(t) - f_1(-t)
$
so that  $q$ has sine-like shape. \\

(II)  Next, we consider  the alternative when $U$ is increasing on some  interval $(\sigma, +\infty)$. Evidently, if $U$ is  eventually 
strictly increasing, then $U(t) = u(t)$ for all
sufficiently large values of $t$. This implies that $u$
satisfies the equation $u'(t) =  (a + b)u(t) + f(t)$. However, as we have seen in (I),  the described situation
can occur only if $a+b>0$, with $u(t) \to + \infty$.  This is the first option in the statement of  Lemma \ref{L1}. 

So assume that $U$ is increasing on $(\sigma, +\infty)$ and there exists a sequence of maximal intervals $[a_j,b_j]$, $a_j< b_j < a_{j+1}, \ \lim a_j =+\infty$, such that 
$U$  is constant  on each of them. Then clearly the third option of the lemma is satisfied for each $\tau_2=a_j$. \\

(III) Finally,  if $U$ is not eventually monotone then there exist $\alpha +2h < s_1<s_2<s_3$ such that $U(s_1) < U(s_2) > U(s_3)$. If $\hat s$ is the leftmost point where the absolute 
maximum of $u(t)$ on the interval $[s_1,s_3]$ is attained, then $\hat s\in (s_1,s_3)$  and  the third option of the lemma is satisfied with $\tau_2=\hat s$.

This completes the proof of Lemma \ref{L1}.\qed 
\end{proof}
\begin{lemma} \label{L2} Assume that the trivial solution of the delay equation
\begin{equation}
u'(t)= a u(t) + b\max_{s \in [t-h,t]}u(s)
\label{eq}
\end{equation}
is uniformly asymptotically stable,  that is, either of the following conditions holds \cite{VB}:
\begin{equation}
  b + a<0,\ ah \leq 1 \quad \mbox{or} \quad bh < -\exp(a h -1),\ ah \geq  1.
\label{ga}
\end{equation}

Then every solution of (\ref{1})
is bounded on each interval $[r, + \infty)$, $r \in \R$, belonging to  its domain. 
\end{lemma}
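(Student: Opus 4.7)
The strategy is proof by contradiction, combined with the trichotomy of Lemma \ref{L1}. I will assume that some solution $u$ of (\ref{1}) is unbounded on $[r,+\infty)$, apply Lemma \ref{L1}, and exclude each of its three options. As a preliminary, I verify that condition (\ref{ga}) implies $a+b<0$: the first alternative states this directly; for the second, the convexity estimate $e^{x-1}\ge x$ (valid for all real $x$), applied at $x=ah\ge 1$, gives $bh<-e^{ah-1}\le -ah$, hence $b<-a$.

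Option 1 is ruled out at once: if $u$ is eventually strictly increasing with $u(t)\to+\infty$, then $\max_{s\in[t-h,t]}u(s)=u(t)$ for $t$ large, so $u$ satisfies the scalar linear ODE $u'(t)=(a+b)u(t)+f(t+\tau)$. Since $a+b<0$ and $f$ is bounded, all its solutions are bounded, contradicting $u(t)\to+\infty$.

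Option 2 requires a short preliminary: a strict decrease of $U(t):=\max_{s\in[t-h,t]}u(s)$ forces the maximum in $[t-h,t]$ to be attained at the left endpoint, so $U(t)=u(t-h)$ off countably many plateaus of length at most $h$. Hence $u$ eventually satisfies the linear delay equation $u'(t)=au(t)+bu(t-h)+f(t+\tau)$, whose trivial solution is uniformly asymptotically stable under (\ref{ga}) (which I will verify through the characteristic equation $\lambda=a+be^{-\lambda h}$). Bounded forcing then keeps $u$ bounded, contradicting $u(t)\to-\infty$. Option 3 is handled by evaluating (\ref{1}) at a qualified local maximum $\tau_2$: since $u'(\tau_2)=0$ and $\max_{s\in[\tau_2-h,\tau_2]}u(s)=u(\tau_2)$, I obtain $(a+b)u(\tau_2)+f(\tau_2+\tau)=0$, hence $|u(\tau_2)|\le \|f\|_\infty/|a+b|$. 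Iterating Lemma \ref{L1} on forward restrictions of $u$ then either produces Option 1 or 2 at some stage (already excluded) or yields an increasing sequence $\tau_2^{(k)}\to\infty$ of qualified local maxima with uniformly bounded values.

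The main obstacle is the final step of the Option 3 case: converting the uniform bound on the discrete sequence $\{u(\tau_2^{(k)})\}$ into a uniform bound on $u$ on all of $[r,+\infty)$. The bound at $\tau_2^{(k)}$ controls the sliding maximum $U$ on $[\tau_2^{(k)},\tau_2^{(k)}+h]$ and, via the linear dynamics in (\ref{1}), propagates forward to the next qualified maximum; a symmetric argument applied to $-u$ gives the lower bound. A cleaner but less elementary alternative would bypass Lemma \ref{L1} entirely and invoke a Lyapunov functional for (\ref{eq}) whose exponential decay (granted by uniform asymptotic stability) absorbs the bounded forcing $f$, yielding input-to-state stability for (\ref{1}) directly.
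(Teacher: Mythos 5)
There are two genuine gaps, both at the points where the stability hypothesis must actually do work. First, your exclusion of Option 2 rests on the claim that (\ref{ga}) makes the trivial solution of the linear delay equation $u'(t)=au(t)+bu(t-h)$ uniformly asymptotically stable, to be ``verified through the characteristic equation.'' This is false: conditions (\ref{ga}) only exclude \emph{nonnegative real} zeros of $z-a-be^{-zh}$, not complex zeros with positive real part. For instance $a=0$, $b=-10$, $h=1$ satisfies the first condition in (\ref{ga}), yet $u'(t)=-10u(t-1)$ is unstable since $|b|h>\pi/2$. The paper is explicit about the asymmetry: exponential stability of (\ref{lde}) implies that of (\ref{eq}), but the converse direction needs the eventual monotonicity of solutions of (\ref{eq}) (from \cite{PT}), which is what reduces the stability question for the max-equation to real roots only. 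Your Option 2 argument uses exactly the implication that does not hold. Second, in Option 3 you correctly obtain the upper bound $u(\tau_2)=-f(\tau_2+\tau)/(a+b)$ at qualified maxima, but the lower bound is the genuinely hard half, and ``a symmetric argument applied to $-u$'' does not exist: the functional $\phi\mapsto\max\phi$ is positively homogeneous but not odd, so $-u$ satisfies a \emph{min}-type equation whose qualified minima carry no analogous a priori bound. The lower bound is precisely where (\ref{ga}) must enter, and your sketch does not supply an argument for it.

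The paper avoids both difficulties with a single rescaling/compactness argument that does not use Lemma~\ref{L1} at all: uniform exponential stability of (\ref{eq}) gives a uniform $\mu>2h$ with $\|v_\mu\|\le\frac12\|v_{2h}\|$ for all solutions $v$ of (\ref{eq}); if some solution of (\ref{1}) were unbounded, normalizing by the blowing-up sup-norm kills the bounded forcing $f$ in the limit (here the positive homogeneity of the max-functional is essential), and the limit function is a solution of (\ref{eq}) with $\|v_\mu\|=1\ge\|v_{2h}\|$, contradicting the decay estimate. If you want to keep your trichotomy-based outline, you would still need an argument of this type (or the ISS/Lyapunov route you mention, carried out for the max-equation rather than for (\ref{lde})) to handle Option 2 and the lower bound in Option 3; as written, neither step is established.
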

\begin{proof} 
 First, we notice that, by  \cite[Theorem 2.1]{PT},  every non-trivial solution of  (\ref{eq}) is eventually strictly monotone. This implies that  
the zero solution to (\ref{eq}) is uniformly exponentially stable if and only if  the 
characteristic function $z -a -b e^{-z h}$  associated to the linear delay-differential equation 
\begin{equation}\label{lde}
u'(t)= au(t)+bu(t-h)
\end{equation}
 does not have nonnegative real zeros (hence, the exponential stability of equation (\ref{lde})
 implies   the uniform exponential stability of (\ref{eq})). It can be proved that this property holds if and only if either of conditions in (\ref{ga}) holds,
 a stability result established in  \cite{VB}. 
 
In the following, we assume that equation (\ref{eq}) is uniformly exponentially stable.
Then, for every solution $v: \R_+ \to \R$ of (\ref{eq}) there is a real number $\mu$ such that  $\mu >2h > 0$  and
$\|v_{\mu}\| \leq 0.5 \|v_{2h}\|$, where $\|\phi\|=\max\{|\phi(s)|:s\in[-h,0]\}$, $v_d(s)= v(d+s), s\in[-h,0]$. 

Assume that there is an unbounded solution  $u$  of (\ref{1}). Then there 
exists  a sequence $t_n \to +\infty$ such that
$|u(t_n+\mu)|= \max\limits_{r \in [t_n,t_n+\mu]}|u(r)| \to \infty$
as $n \to \infty$. The sequence $\{v^{(n)}\}$ defined by
$$v^{(n)}(t)=\frac{u(t+t_n)}{\max\limits_{s \in [t_n,t_n+\mu]}|u(s)|}\, ,\; t\in[0,\mu],$$
is relatively compact in $C([0,\mu])$. Then,  there exists a subsequence   $\{v^{(n_j)}\}$ that converges uniformly to a function $v \in C([0,\mu])$. Finally, $v$
satisfies equation (\ref{eq}) on the interval $[h,\mu]$ and 
$1= |v(\mu)|=\|v_{\mu}\| = \max\limits_{r\in[0,\mu]}|v(r)| \geq \|v_{2h}\|$,
a contradiction with the definition of $\mu$. \qed
\end{proof}
\subsection{Construction of the return map}
\label{construction}
In the sequel,  we  assume that either of the two conditions in (\ref{ga}) holds, that $h <T$, and consider  $T$-periodic continuous functions $f$ with sine-like shape. Set also  
 \begin{equation}
\label{ftilde}
\tilde f(t) =\frac{f(t)}{|a +b|}.
\end{equation}
Notice that (\ref{ga}) implies that $a+b<0$.

After appropriate
change of variables  $u \to
v + \min_{t \in \R} \tilde f(t)$
and $t \to s + const$,   without loss of generality, we can assume that the following condition holds:\\

\noindent ({\bf H}) $f$ is a continuous $T-$periodic function, strictly decreasing on the
interval $I_1 = [0,\beta]$ and strictly increasing on  $I_2 =[\beta, T]$, with $\min_{t \in \R} f(t) =0.$\\

Clearly, if  $p \in \tilde f(I_1)$ then $p = \tilde f(q)$ for a unique
$q \in I_1$. Let $u(\cdot,p): [q, + \infty) \to \R$ be the solution
of the initial value problem $u(s,p) \equiv p, \ s \in [q-h,q]$ for equation (\ref{meq}). Then Lemmas \ref{L1} and \ref{L2} guarantee the existence of 
 $\nu = \nu (q) > q$ and $\varepsilon > 0$ such that
\begin{equation}
\label{condition}
u(\nu,p)= \max\limits_{r \in[\nu - h,\nu+\varepsilon]} u(r,p).
\end{equation}

Let $\nu^*$  be the  the smallest $\nu>q$ satisfying (\ref{condition})  and set  ${\cal R}(p) = u(\nu^*,p)$. We refer the reader to Figure~\ref{FigSc}  below for an illustration of the definition of ${\cal R}$ and some characteristic points involved in our results.
The next statement says that ${\cal R}(\tilde f([0,\beta])) \subset \tilde f([0,\beta))$, 
 in other words, that   ${\cal R}(p) >0$ for each $p \in  \tilde f([0,\beta])$ and  the application 
 \begin{equation}\label{R} 
 {\cal R}: \tilde f([0,\beta]) \to \tilde f([0,\beta])
\end{equation}
 is well defined. 
\begin{lemma}\label{Lr}{ Let $u: [-h, + \infty) \to \R$ be a solution
of (\ref{meq}),   and let  $\tau > 0$ be a point of local maximum  for
$u$; moreover, assume that, for some $\epsilon >0$,  $u(\tau) \geq u(t)$ for all $t \in [\tau - h, \tau+\epsilon)$.
Then $u(\tau) = f(\tau)/|a + b|$ and
$\tau ^*=\tau \pmod{T} \in [0,\beta)$.}
\end{lemma}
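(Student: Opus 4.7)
The plan is to exploit the local structure of $u$ around $\tau$ via equation (\ref{meq}), obtaining the identity $u(\tau)=f(\tau)/|a+b|$ from the vanishing of $u'$ at $\tau$, and then to rule out $\tau^{*}\in[\beta,T)$ by a short variation-of-constants argument on a right neighborhood of $\tau$.

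First I note that since $u$ is continuous on $[-h,+\infty)$, the function $t\mapsto \max_{s\in[t-h,t]}u(s)$ is continuous, so the right-hand side of (\ref{meq}) is continuous in $t$, giving $u\in C^{1}$. Because $\tau$ is an interior point of the domain and is a local maximum of $u$, we have $u'(\tau)=0$. The hypothesis $u(\tau)\ge u(t)$ for $t\in[\tau-h,\tau+\epsilon)$ forces $\max_{s\in[\tau-h,\tau]}u(s)=u(\tau)$, so substitution into (\ref{meq}) at $t=\tau$ yields
$$0=u'(\tau)=(a+b)\,u(\tau)+f(\tau),$$
and the relation $a+b<0$ (Lemma \ref{L2}) gives $u(\tau)=-f(\tau)/(a+b)=f(\tau)/|a+b|$, proving the first assertion.

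For the second assertion I argue by contradiction, assuming $\tau^{*}:=\tau\pmod T\in[\beta,T)$. Fix $\eta>0$ small enough that $\eta<\min(h,\epsilon)$ and $\tau^{*}+\eta<T$. For $t\in[\tau,\tau+\eta)$ the interval $[t-h,t]$ is contained in $[\tau-h,\tau+\epsilon)$ and contains $\tau$, so by the hypothesis $\max_{s\in[t-h,t]}u(s)=u(\tau)$. Therefore on $[\tau,\tau+\eta)$ equation (\ref{meq}) reduces to the linear ODE
$$u'(t)=a\,u(t)+b\,u(\tau)+f(t).$$
Setting $g(t)=u(t)-u(\tau)$, using $g(\tau)=0$ and $(a+b)u(\tau)+f(\tau)=0$, I get $g'(t)=a\,g(t)+(f(t)-f(\tau))$, and variation of constants yields
$$g(t)=\int_{\tau}^{t}e^{a(t-s)}\bigl(f(s)-f(\tau)\bigr)\,ds.$$
Because $f$ is $T$-periodic and, by (H), strictly increasing on $[\beta,T]$, and because for $s\in(\tau,\tau+\eta)$ one has $s\pmod T=\tau^{*}+(s-\tau)\in(\tau^{*},T)\subset[\beta,T)$, the integrand is strictly positive, hence $g(t)>0$ on $(\tau,\tau+\eta)$, contradicting $u(t)\le u(\tau)$. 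Thus $\tau^{*}\in[0,\beta)$.

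The only delicate point is paragraph three, where one must verify that the localization of $\tau^{*}$ in $[\beta,T)$ really translates into $f(s)>f(\tau)$ for $s$ in a one-sided neighborhood $(\tau,\tau+\eta)$ of $\tau$ itself; this is why I shrink $\eta$ so that $\tau^{*}+\eta$ stays below $T$ and the increasing branch of $f$ on $[\beta,T]$ can be invoked directly via periodicity. Note also that the case $\tau^{*}=\beta$ (a strict minimum of $f$) is covered by the same estimate, since $f(s)>f(\beta)$ for $s$ slightly to the right of $\beta$.
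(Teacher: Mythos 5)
Your proof is correct and follows essentially the same route as the paper's: the identity $u(\tau)=f(\tau)/|a+b|$ comes from $u'(\tau)=0$ together with $\max_{s\in[\tau-h,\tau]}u(s)=u(\tau)$, and the localization $\tau^*\in[0,\beta)$ is obtained by the identical contradiction argument, reducing (\ref{meq}) to a linear ODE on a right neighborhood of $\tau$ and applying variation of constants to show $u$ would strictly increase there if $f$ were increasing at $\tau^*$. The only cosmetic difference is that $a+b<0$ is a consequence of the standing assumption (\ref{ga}) rather than of Lemma \ref{L2} itself.
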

\begin{proof}The first conclusion of the lemma is evident.  We prove the
second one by contradiction. Suppose that
$\tau^* \not\in  [0,\beta)$. Then there exists an interval
$E=(0,  \varepsilon)$, $0<\varepsilon<h$, such that $ f(s + \tau)- f(\tau) > 0$ and
$M=\max\limits_{r \in[\tau - h,\tau]} u(r) \geq u(s + \tau)$
for all $s \in E$. This implies that the function
$d(s) = u(s+\tau) - u(\tau)$ satisfies the equation
$$d'(s) = a d(s) + f(s + \tau) - f(\tau)$$
for all $s \in E$. Using the variation of constants formula and
the equality $d(0) = 0$,
we get
$$0 \geq d(s) \exp (-a s) =
\int _0^s \exp(-a r)(f(r + \tau)- f(\tau))dr > 0,$$
for all $s \in E$. This contradiction proves that actually $\tau^* \in  [0,\beta)$. \qed 
\end{proof}
Note that a partial converse of Lemma \ref{Lr} is also true:  
\begin{lemma}\label{Lr2}
Let $u: [-h, + \infty) \to \R$ be a solution
of (\ref{meq}). If $u(\tau) = \tilde f(\tau) = \max\limits_{s \in[\tau - h,\tau]} u(s)$, 
where $\tau^*= \tau \pmod{T} \in [0,\beta)$, then $u$ strictly decreases on $I_r:= (\tau, \tau +r)$, where $r= \min\{h, \beta-\tau^*\}>0$. In particular,
 $\tau > 0$ is a point of local maximum  for
$u$. 
\end{lemma}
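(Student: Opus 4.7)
The strategy is to reduce equation~(\ref{meq}) on $[\tau,\tau+r)$ to a linear ODE by showing that the max term $M(t):=\max_{s\in[t-h,t]}u(s)$ remains equal to $u(\tau)$ on this interval, and then to analyse the resulting ODE by variation of constants. The identity $(a+b)u(\tau)+f(\tau)=0$, a rewriting of the hypothesis $u(\tau)=\tilde f(\tau)=f(\tau)/|a+b|$ together with $a+b<0$ from (\ref{ga}), is used throughout.

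The bootstrap step is to show $u(s)<u(\tau)$ on $(\tau,\tau+r)$. Let $t_0:=\inf\{s\in(\tau,\tau+r):u(s)\geq u(\tau)\}$, and suppose for contradiction that $t_0<\tau+r$. By continuity $u(t_0)=u(\tau)$, and because $r\leq h$ the delay window $[t-h,t]$ is contained in $[\tau-h,t_0]$ for every $t\in[\tau,t_0]$. Combined with the hypothesis $u\leq u(\tau)$ on $[\tau-h,\tau]$ and the fact that $\tau\in[t-h,t]$, this forces $M(t)=u(\tau)$, so $u$ solves $u'(t)=au(t)+bu(\tau)+f(t)$ on $[\tau,t_0]$. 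Writing $w(t):=u(t)-u(\tau)$ and invoking the key identity gives $w'(t)=aw(t)+(f(t)-f(\tau))$ with $w(\tau)=0$, and variation of constants yields
\[w(t)=\int_\tau^t e^{a(t-s)}\bigl(f(s)-f(\tau)\bigr)\,ds.\]
For $s\in(\tau,t_0]$, the residue $s\pmod{T}$ lies in $(\tau^*,\beta]$ because $t_0-\tau\leq r\leq\beta-\tau^*$; (H) then gives $f(s)<f(\tau)$ strictly, the integrand is strictly negative, and $w(t_0)<0$ contradicts $w(t_0)=0$. The degenerate case $t_0=\tau$ is excluded by a local analysis near $\tau$: assuming $u$ is increasing right after $\tau$ forces $M(t)=u(t)$, whence $u'(t)=(a+b)u(t)+f(t)$ and $u''(\tau^+)=f'(\tau)<0$ when $f\in C^1$ (with a density argument in general), contradicting the assumed increase.

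With $u<u(\tau)$ on the full interval $(\tau,\tau+r)$, the linear ODE and the formula for $w$ extend to all of $[\tau,\tau+r)$. The strict monotonicity of $u$ then follows from the representation, obtained by integration by parts in the formula for $w$ (for $f\in C^1$, with the general continuous case handled by $C^1$-approximation preserving strict negativity on compacta):
\[w'(t)=\int_\tau^t e^{a(t-s)}\,f'(s)\,ds<0,\]
since $f'<0$ on the shifted range $(\tau^*,\beta)$. I expect the main technical obstacle to be precisely this monotonicity step when $a<0$: in the direct form $w'(t)=aw(t)+(f(t)-f(\tau))$ the two summands have opposite signs (since $w<0$), so a pointwise comparison cannot succeed, and the integration-by-parts representation — which isolates $f'$ — is essential for transparent sign control. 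The final assertion that $\tau$ is a local maximum of $u$ is then immediate from $u(s)<u(\tau)$ on $(\tau,\tau+r)$ together with the hypothesis $u\leq u(\tau)$ on $[\tau-h,\tau]$.
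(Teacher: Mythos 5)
Your core computation coincides with the paper's: freeze the max term at $u(\tau)$, use the identity $(a+b)u(\tau)+f(\tau)=0$ to reduce to $w'(t)=aw(t)+(f(t)-f(\tau))$, and read off the sign from $f(s)<f(\tau)$ for $s\pmod{T}\in(\tau^*,\beta]$. Where you diverge is in justifying that $u$ itself obeys the frozen equation. The paper constructs the solution $v$ of the auxiliary problem $v'(t)=av(t)+bv(\tau)+f(t)$, $v(\tau)=\tilde f(\tau)$, checks from the integral formula that $v<v(\tau)$ on $I_r$ (so that $\max_{s\in[t-h,t]}v(s)=v(\tau)$, i.e.\ $v$ solves (\ref{meq}) with the same data as $u$), and concludes $u=v$ by uniqueness of solutions to (\ref{meq}). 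You instead bootstrap on $u$ via $t_0=\inf\{s\in(\tau,\tau+r):u(s)\ge u(\tau)\}$. This works in the main case $t_0>\tau$, but it forces you to confront the degenerate case $t_0=\tau$, and your treatment of it is the genuine gap: $t_0=\tau$ only says there are points $s_n\downarrow\tau$ with $u(s_n)\ge u(\tau)$; it does not make $u$ increasing to the right of $\tau$, so you may not set $M(t)=u(t)$ there. Moreover the clincher $u''(\tau^+)=f'(\tau)<0$ fails on its own terms: in this lemma $f$ is only continuous (sine-like, condition (H)), and even for $C^1$ sine-like $f$ one only gets $f'(\tau)\le 0$, with equality possible (e.g.\ at the turning point $\tau^*=0$). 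The paper's construct-and-identify route avoids this case altogether, which is precisely why it is preferable. If you insist on the bootstrap, the case can be excluded honestly: setting $g(t)=\max_{s\in[\tau,t]}(u(s)-u(\tau))$, variation of constants gives $u(t)-u(\tau)\le b\int_\tau^te^{a(t-s)}g(s)\,ds$, which for $b\le0$ forces $g\equiv0$ at once and in general yields $g(t)\le C(t-\tau)\,g(t)$ near $\tau$, hence $g\equiv0$.

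Your monotonicity step has a second, smaller defect: it leans on $f\in C^1$ through $w'(t)=\int_\tau^te^{a(t-s)}f'(s)\,ds$, and the unspecified approximation argument for merely continuous $f$ only yields $w'\le0$ in the limit, since strict inequalities do not survive uniform convergence. You correctly identified that $a<0$ is the delicate sign case, but no smoothness is needed: from $w(t)=\int_\tau^te^{a(t-s)}(f(s)-f(\tau))\,ds$ and the strict decrease of $f$ one gets, for $a<0$, $aw(t)<(f(t)-f(\tau))\bigl(e^{a(t-\tau)}-1\bigr)$ and hence $w'(t)<(f(t)-f(\tau))e^{a(t-\tau)}<0$, the cases $a\ge0$ being immediate from $w<0$. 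Alternatively, $w'\le0$ together with the observation that $w$ cannot be constant on a subinterval (that would force $f$ to be constant there) already gives strict decrease. With these two repairs your argument becomes a correct, if more laborious, alternative to the paper's proof.
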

\begin{proof} Indeed, consider the initial value problem $v(\tau)= \tilde f(\tau)$ for the equation  $v'(t)=av(t)+bv(\tau)+f(t)$. 
The difference 
$m(t) = v(t) - v(\tau)$ satisfies the equation
$$m'(t) = a m(t) + f(t) - f(\tau), \quad m(\tau)=0.$$
Thus, by  the variation of constants formula, for all $t \in I_r$, 
$$(v(t) - v(\tau))\exp (-at) =
\int _\tau^t \exp(-a s)(f(s)- f(\tau))ds <0,$$
proving that  $\tau > 0$ is a point of local maximum  for
$v$ and therefore $u(t)=v(t)$ for all $t \in I_r$. The same computation shows that $u(t_1)=v(t_1) > v(t_2)=u(t_2)$ if $t_1, t_2 \in I_r$ and $t_1 < t_2$, and therefore $u$ is strictly decreasing on $I_r$.
\qed 
\end{proof}

The first recurrence  map ${\cal R}$ plays the same role as the Poincar\'e map in the case of  periodic differential equations.
The following evident statement summarizes the relations between the
delay differential equation (\ref{meq}) and the one-dimensional dynamical system defined by (\ref{R}).
\begin{lemma} \label{L5} For a given solution $u$ of
equation (\ref{meq}), the set of all points satisfying
the third property of Lemma~\ref{L1} forms  a strictly increasing unbounded sequence  $\{\tau_j, j \in \N\}$. Furthermore,  $u(\tau_{n+j}) = {\cal R}^j (u(\tau_n))$ for all $j \geq 0$ and $n \geq 1$. 
\end{lemma}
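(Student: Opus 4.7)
The plan is to construct the sequence $\{\tau_j\}$ by iterated application of Lemma~\ref{L1}, and then to identify the successor map $u(\tau_n) \mapsto u(\tau_{n+1})$ with ${\cal R}$ by combining the $T$-periodicity of $f$ with a uniqueness argument on intervals where the $\max$-term freezes.

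\emph{Existence and discreteness of $\{\tau_j\}$.} Under hypothesis (\ref{ga}), Lemma~\ref{L2} guarantees that $u$ is bounded on $[\alpha,+\infty)$, so options~1) and 2) of Lemma~\ref{L1} are excluded and option~3) delivers a first qualified maximum $\tau_1 > \alpha + h$. I then define $\tau_{n+1}$ inductively as the infimum of the qualified maxima of $u$ strictly greater than $\tau_n$. The set is nonempty because Lemma~\ref{L1} applied to $u|_{[\tau_n,\infty)}$ supplies at least one qualified maximum past $\tau_n + h$. That this infimum is attained and no accumulation occurs follows from Lemma~\ref{Lr2}, which yields a positive-length interval $(\tau_n,\tau_n+r_n)$ on which $u$ strictly decreases, together with a continuity argument (a limit of qualified maxima of the $C^1$ function $u$ is again a qualified maximum). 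Unboundedness is obtained by contradiction: a finite accumulation point $\tau^*$ of $\{\tau_j\}$ would itself be a qualified maximum, and then Lemma~\ref{Lr2} would produce a decreasing interval past $\tau^*$, precluding nearby qualified maxima.

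\emph{The return-map identity.} Fix $n \geq 1$. By Lemma~\ref{Lr}, $q_n := \tau_n \pmod{T} \in [0,\beta)$ and $p_n := u(\tau_n) = \tilde f(q_n) \in \tilde f([0,\beta))$; set $k_n := (\tau_n - q_n)/T \in \Z$. By $T$-periodicity of $f$, the shifted function $v(t) := u(t + k_n T)$ is a solution of (\ref{meq}) with $v(q_n) = p_n$ and $\max_{s \in [q_n-h,q_n]} v(s) = p_n$. The constant-initial-data solution $u(\cdot,p_n)$ defined in Subsection~\ref{construction} shares these two properties. While $v(t)$ and $u(t,p_n)$ remain at most $p_n$ on a forward interval starting at $q_n$, the $\max$-term collapses to $p_n$ for both, so each satisfies the same scalar linear ODE $w'(t) = a w(t) + b p_n + f(t)$ with $w(q_n) = p_n$. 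Standard uniqueness then gives $v(t) \equiv u(t,p_n)$ on $[q_n,\nu^*(p_n)]$, where $\nu^*(p_n)$ is the first qualified maximum of $u(\cdot,p_n)$ after $q_n$. Consequently, the first qualified maximum of $u$ after $\tau_n$ is $\tau_{n+1} = \nu^*(p_n) + k_n T$, and $u(\tau_{n+1}) = v(\nu^*(p_n)) = u(\nu^*(p_n),p_n) = {\cal R}(p_n) = {\cal R}(u(\tau_n))$. The identity $u(\tau_{n+j}) = {\cal R}^j(u(\tau_n))$ then follows by induction on $j$.

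\emph{Main obstacle.} The delicate step is to confirm that both $v$ and $u(\cdot,p_n)$ stay at most $p_n$ throughout $[q_n,\nu^*(p_n)]$, so that the $\max$-term genuinely equals $p_n$ and the uniqueness argument is valid on the whole interval; equivalently, no qualified maximum arises strictly inside $(q_n,\nu^*(p_n))$ for either trajectory. This is forced by the minimality in the definition of $\nu^*(p_n)$ together with the structural fact that $U(t)$ is constant on any interval between consecutive qualified maxima. The same structural observation is useful in patching the discreteness step when the length $r_n$ from Lemma~\ref{Lr2} becomes small (which happens as $\tau_n \pmod{T}$ approaches $\beta$).
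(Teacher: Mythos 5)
The paper itself offers no proof of Lemma~\ref{L5} (it is declared ``evident''), so your write-up supplies more detail than the source; your overall strategy --- iterate Lemma~\ref{L1} to generate the sequence, use Lemmas~\ref{Lr} and \ref{Lr2} to rule out accumulation, then identify the successor map with ${\cal R}$ by shifting by $k_nT$ and invoking uniqueness --- is the intended one and is essentially sound. One remark on the discreteness step: instead of appealing to limits of qualified maxima, you get a uniform gap directly from Lemma~\ref{Lr}. Consecutive qualified maxima $\tau<\tau'$ satisfy $u(\tau)=\tilde f(\tau^*)$, $u(\tau')=\tilde f(\tau'^*)$ with $\tau^*,\tau'^*\in[0,\beta)$; if $\tau'-\tau<h$ then $\tau\in[\tau'-h,\tau']$ forces $u(\tau')\ge u(\tau)$, hence $\tau'^*\le\tau^*$ (as $\tilde f$ decreases on $[0,\beta]$) and therefore $\tau'-\tau\ge T-\beta$. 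So the gap between consecutive qualified maxima is at least $\min\{h,\,T-\beta\}>0$, which settles both attainment of the infimum and unboundedness at once.

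The one genuine defect is your ``main obstacle'' paragraph, which asserts something false and then resolves it with another false statement. The solutions $v$ and $u(\cdot,p_n)$ do \emph{not} stay at most $p_n$ throughout $[q_n,\nu^*(p_n)]$: a $U$-shaped trajectory crosses the level $p_n$ at $\lambda(q_n)$ and rises above it (indeed ${\cal R}(p)>p$ near $p=0$ by Corollary~\ref{Cor20}), and $U(t)$ is not constant between consecutive qualified maxima (it is constant only on $I_1$, then equals $u(t-h,p)$ on $I_2$ and $u(t,p)$ on $I_3$). Fortunately, no such claim is needed. Since $q_n$ is an argmax of $v$ over $[q_n-h,q_n]$, for every $t\in(q_n,q_n+h]$ one has $\max_{s\in[t-h,q_n]}v(s)=p_n=\max_{s\in[t-h,q_n]}u(s,p_n)$, while for $t>q_n+h$ the window $[t-h,t]$ lies entirely in $(q_n,\infty)$. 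Hence the two max-functionals agree for all $t\ge q_n$ whenever the forward segments agree, whether or not the solutions exceed $p_n$, and the standard continuation and uniqueness argument yields $v\equiv u(\cdot,p_n)$ on all of $[q_n,+\infty)$, with qualified maxima of $v$ on $(q_n,\infty)$ corresponding exactly to those of $u(\cdot,p_n)$. With that replacement your identification $u(\tau_{n+1})={\cal R}(u(\tau_n))$, and the induction on $j$, go through.
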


We emphasize that, clearly,  there is a
correspondence between the periodic solutions of (\ref{meq})
and the set of periodic points of ${\cal R}$. In particular, by \cite{17}, 
${\cal R}$ has at least one fixed point.

\subsection{Existence of a compact global attractor}
Next, we show how the stability assumptions (\ref{ga}) imply that the semiflow $F^{\mu}: X \times \R_+ \to X$ defined in Subsection~\ref{sec:prop} possesses a compact global attractor. 
\begin{lemma} \label{Lem7}
Assume that either of the  two conditions in (\ref{ga}) holds, that $h <T$, and that the T-periodic continuous function $f$ has sine-like shape. 
Then, for each $\phi\in C[-h,0]$, there exist  $K$, which does not depend on $\phi$, and  $t_0=t_0(\phi)$ such that 
\begin{equation}
\label{bounds} K \leq u(t,\tau,\phi) \leq  f^*:= \frac{1}{|a+b|} \max\limits_{t\in [0,T]} f(t),
\end{equation}
for all $t \geq t_0$ and $\tau\in[0,T)$, where $u(t)=u(t,\tau, \phi)$  denotes the solution
of (\ref{1}), (\ref{2}). 
\end{lemma}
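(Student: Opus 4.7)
The proof splits into the explicit upper bound $u(t)\le f^{*}$ and a uniform lower bound $u(t)\ge K$.

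For the upper bound, I would invoke Lemmas~\ref{L1}, \ref{L2}, \ref{L5} and \ref{Lr}. By Lemma~\ref{L2} every solution of (\ref{1}) stays bounded on $[\tau,+\infty)$, so alternatives 1) and 2) of Lemma~\ref{L1} (which would force $u(t)\to\pm\infty$) are excluded, and alternative 3) must hold. By Lemma~\ref{L5} the solution then possesses a strictly increasing unbounded sequence $\tau_{1}<\tau_{2}<\cdots$ of qualified local maxima, and Lemma~\ref{Lr} gives $u(\tau_{n})=\tilde f(\tau_{n}+\tau)\le f^{*}$. To extend this pointwise inequality to the entire interval $[\tau_{n},\tau_{n+1}]$, I pick a point $t^{*}\in[\tau_{n},\tau_{n+1}]$ where $u$ attains its maximum on that compact set. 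If $t^{*}$ coincides with an endpoint we are done. Otherwise, for every $s\in[t^{*}-h,t^{*}]$ with $s\ge\tau_{n}$ the choice of $t^{*}$ yields $u(s)\le u(t^{*})$, while for $s\in[t^{*}-h,\tau_{n})\subset[\tau_{n}-h,\tau_{n})$ the qualifiedness of $\tau_{n}$ together with $u(\tau_{n})\le u(t^{*})$ gives the same bound. Combined with local-maximality at $t^{*}$, this shows $t^{*}$ is itself a qualified maximum strictly inside $(\tau_{n},\tau_{n+1})$, contradicting the minimality of $\tau_{n+1}$. Hence $u(t)\le\max\{u(\tau_{n}),u(\tau_{n+1})\}\le f^{*}$ for every $t\ge\tau_{1}$, so I may take $t_{0}=\tau_{1}(\phi)$.

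For the uniform lower bound I would run a perturbation-and-Gronwall argument based on uniform exponential stability (UES) of the homogeneous equation (\ref{eq}). By \cite{VB}, condition (\ref{ga}) is equivalent to UES of (\ref{eq}); together with the positive $1$-homogeneity $v\mapsto cv$, $c\ge 0$, of (\ref{eq}), this provides constants $\mu>0$ and $\rho\in(0,1)$, independent of the initial data, such that every solution $v$ of (\ref{eq}) satisfies $\|v_{\tau+\mu}\|\le\rho\|v_{\tau}\|$. Given a solution $u$ of (\ref{meq}), I let $v$ solve (\ref{eq}) with $v_{\tau}=u_{\tau}$ and set $w=u-v$; then $w_{\tau}\equiv 0$ and
\[
w'(t)=aw(t)+b\bigl(\max u_{t}-\max v_{t}\bigr)+f(t+\tau),\qquad \bigl|\max u_{t}-\max v_{t}\bigr|\le\|w_{t}\|.
\]
A routine Gronwall estimate on the fixed window $[\tau,\tau+\mu]$ produces a constant $C=C(\mu,a,b,\max f)$, independent of $\phi$, with $\|w_{\tau+\mu}\|\le C$. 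Combining with the contraction of $v$ gives $\|u_{\tau+\mu}\|\le\rho\|u_{\tau}\|+C$, and iteration yields $\|u_{\tau+n\mu}\|\le\rho^{n}\|u_{\tau}\|+C/(1-\rho)$. Thus after a finite time $t_{1}(\phi)$ the norm $\|u_{t}\|$ enters the ball of radius $K^{*}:=C/(1-\rho)+1$, which gives $u(t)\ge -K^{*}=:K$, uniformly in $\phi$.

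I expect the principal technical obstacle to be the extraction of uniform (in $v_{\tau}$) contraction constants $\mu,\rho$ for the nonlinear equation (\ref{eq}). UES is typically a local property for nonlinear systems, and the positive $1$-homogeneity of the $\max$-functional is essential here to upgrade the small-data exponential decay, guaranteed by the absence of nonnegative real zeros of $z-a-be^{-zh}$, into a globally uniform $\rho$-contraction of the shift map after time $\mu$. The Gronwall loop for $w$ must also be closed despite the nonsmoothness of the $\max$-functional, but the elementary Lipschitz bound $|\max u_{t}-\max v_{t}|\le\|w_{t}\|$ and iteration on a window of fixed length $\mu$ (not depending on $u$) suffice to complete the argument.
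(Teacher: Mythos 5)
Your argument is correct. For the upper bound $u\le f^*$ you follow essentially the same path as the paper: both proofs use Lemmas~\ref{L1}, \ref{L2}, \ref{L5} and \ref{Lr} to produce the sequence of qualified maxima $\tau_j$ with $u(\tau_j)\le f^*$, and both show that a maximizer of $u$ interior to $[\tau_n,\tau_{n+1}]$ would itself satisfy property 3) of Lemma~\ref{L1} and hence belong to $\{\tau_j\}$, which is impossible strictly between consecutive terms. For the lower bound your route genuinely differs. The paper argues by contradiction and compactness: assuming no uniform $K$ exists, it takes a sequence of solutions with constant initial data (the reduction to constant data being supplied by Lemma~\ref{L5}) whose segment norms blow up, normalizes them, and extracts via Arzel\`a--Ascoli a limit function that solves the homogeneous equation (\ref{eq}) and violates the contraction $\|v_\mu\|\le 0.5\,\|v_{2h}\|$ already set up in the proof of Lemma~\ref{L2}. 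You instead compare $u$ directly with the homogeneous solution sharing its segment at time $\tau$, close a Gronwall estimate for the difference $w$ on a window of fixed length $\mu$, and iterate the affine recursion $\|u_{\tau+(n+1)\mu}\|\le\rho\|u_{\tau+n\mu}\|+C$. This is constructive (it produces an explicit $K$ and in fact reproves Lemma~\ref{L2} as a by-product), whereas the paper's soft argument is shorter because the compactness machinery was already built for Lemma~\ref{L2}. The uniform pair $(\mu,\rho)$ that you correctly identify as the crux is exactly what the paper extracts in the proof of Lemma~\ref{L2} from the \cite{VB} criterion (\ref{ga}) together with the positive $1$-homogeneity of the $\max$-functional, so you may simply cite that step rather than re-derive it. One small point to tighten: your recursion controls $\|u_{\tau+n\mu}\|$ only at the discrete times $\tau+n\mu$, so to conclude (\ref{bounds}) for \emph{all} large $t$ you need one further Gronwall bound on each window $[\tau+n\mu,\tau+(n+1)\mu]$ (growth by at most a factor $e^{(|a|+|b|)\mu}$ plus an additive constant depending on $\max f$), which only enlarges the universal constant $K$.
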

\begin{proof} In view of Lemmas~\ref{Lr} and \ref{L5}, there is a sequence $\{\tau_j\}_{j\in\N}$ such that
$u(\tau_j) = -f(\tau_j)/(a+b) \leq f^*$ and $\max\limits_{s \in [\tau_j-h,\tau_j+ \varepsilon_j]}u(s) = u(\tau_j)$, for some $\varepsilon_j>0$.   Take now two consecutive points $\tau_k, \tau_{k+1}$ and suppose that 
$$ u(\tau^*): = \max\{u(t): t \in [\tau_k, \tau_{k+1}]\} > f^*. $$ If $\tau^*$ is the leftmost point with such a property, then necessarily $\tau^* \in \{\tau_j, j \in \N\}$, 
a contradiction.  Therefore $u(t,\tau,\phi) \leq  f^*$ for all sufficiently large $t$. 

By the proof of Lemma~\ref{L2}, we know that for every solution $v: [\tau-h,\infty) \to \R$ of (\ref{eq}) there is a   real number $\mu$ such that  $\mu >2h > 0$  and
$\|v_{\mu}\| \leq 0.5 \|v_{2h}\|$, where $\|\phi\|=\max\{|\phi(s)|:s\in[-h,0]\}.$

On the other hand, in view of Lemma \ref{L5},   if the `universal' constant $K$ in (\ref{bounds}) does not exist, then
there are sequences $s_n \in [0,T), \ \sigma_n \in \R, \
\phi _n(s) \equiv \phi_n \in [\min\limits_{t\in \R} \tilde f(t),
\max\limits_{t\in \R} \tilde f(t)]$, such that  the solutions $u(t,s_n, \phi_n):[s_n-h, +\infty) \to \R$
to (\ref{meq})
satisfy $|u(\sigma _n,s_n, \phi_n)|= \max\limits_{r \in [\sigma_n -\mu,
\sigma_n]}|u(r, s_n, \phi_n)| \to \infty$
as $n \to \infty$. Now, the sequence
$$w^{(n)}(s)= \frac{u(\sigma _n-\mu +s,s_n, \phi_n)}{\max\limits_{r \in [\sigma_n -\mu,
\sigma_n]}|u(r, s_n, \phi_n)|}\, ,\, s\in[0,\mu],$$
is relatively compact in $C([0,\mu])$. Moreover, every limit function $w$
satisfies equation (\ref{eq}) and
$1= |w(\mu)|=\|w_{\mu}\| = \max\limits_{r\in[0,\mu]}|w(r)| \geq \|w_{2h}\|$,
a contradiction with the definition of $\mu$.
 \qed
\end{proof}
\begin{corollary} \label{Co1}Assume all the conditions of Lemma \ref{Lem7} hold. 
Then the  dynamical system $F^{\mu}$ is point
dissipative (see \cite{HMO}). In other words, there exists a positive constant $K_*$ 
 such that for each $x \in X$ we can find $t_0(x)$ satisfying that $F^tx \in S^1 \times \{\phi: \|\phi\| < K_*\}$ for all $t \geq t_0(x)$.
\end{corollary}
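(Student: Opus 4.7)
The plan is to read this off directly from Lemma~\ref{Lem7}, which already provides the hard content: a \emph{uniform} two-sided bound $K\le u(t,\tau,\phi)\le f^*$ for all sufficiently large $t$, where crucially both $K$ and $f^*$ are independent of the initial data $(\tau,\phi)\in X$. Point dissipativity is then essentially a translation of this pointwise bound on $u$ into a bound on the segment $\psi\in C[-h,0]$ that the semiflow $F^\mu$ uses as its fibre coordinate.

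Concretely, I would fix any constant $K_*>\max\{|K|,f^*\}$ and show this choice works. Given $x=(\tau,\phi)\in X$, Lemma~\ref{Lem7} furnishes $t_1(\phi)$ such that $K\le u(t,\tau,\phi)\le f^*$ whenever $t\ge t_1(\phi)$. Recall from Subsection~\ref{sec:prop} that $F^\mu(\tau,\phi)=(\tau_1,\psi)$ with
\[
\psi(s)=u(\tau+\mu+s,\tau,\phi),\qquad s\in[-h,0].
\]
Hence, setting
\[
t_0(x)=\max\bigl\{0,\;t_1(\phi)-\tau+h\bigr\},
\]
one has $\tau+\mu+s\ge\tau+t_0(x)-h\ge t_1(\phi)$ for every $\mu\ge t_0(x)$ and every $s\in[-h,0]$. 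Therefore $\psi(s)\in[K,f^*]$ on $[-h,0]$, which gives $\|\psi\|\le\max\{|K|,f^*\}<K_*$, i.e.\ $F^\mu x\in S^1\times\{\phi:\|\phi\|<K_*\}$. This is exactly the point dissipativity statement.

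Since Lemma~\ref{Lem7} does all the analytic work, there is no real obstacle here. The only bookkeeping to watch is the relation between the time $t_1(\phi)$ after which the pointwise bound begins to hold and the time after which the \emph{entire segment} of length $h$ that defines $\psi$ lies inside the good regime; this is what forces the extra $+h$ in the definition of $t_0(x)$. The conclusion also makes clear why the lower bound $K$ must be universal in $\phi$ (not merely solution-dependent): without that uniformity one would only get bounded trajectories, not point dissipativity.
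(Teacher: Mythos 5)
Your argument is correct and is exactly the reasoning the paper leaves implicit: Corollary~\ref{Co1} is stated without proof as an immediate consequence of Lemma~\ref{Lem7}, and your write-up simply makes explicit the translation from the uniform pointwise bounds $K\le u(t,\tau,\phi)\le f^*$ to a bound on the segment $\psi$, including the necessary $+h$ shift so that the whole history interval lies in the good regime. Nothing further is needed.
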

\begin{thm}  Assume all the conditions of Lemma \ref{Lem7}  hold. 
Then there exists a compact global attractor ${\cal A}(F)$ for $F^{\mu}$. 
\end{thm}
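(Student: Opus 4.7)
The plan is to verify the three standard hypotheses of J.\ Hale's existence theorem for a compact global attractor in a skew-product semiflow (\cite{HMO}): \emph{point dissipativity}, \emph{eventual compactness} of $F^{\mu}$, and \emph{boundedness of positive orbits of bounded sets}. Point dissipativity is already delivered by Corollary \ref{Co1}: the set $S^1 \times \{\phi : \|\phi\|<K_*\}$ absorbs every individual trajectory.

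For eventual compactness, fix any $\mu \geq h$ and any bounded $B \subset X$. For each $(\tau,\phi)\in B$, a Gronwall estimate applied to \eqref{1} bounds $|u(t,\tau,\phi)|$ on $[\tau,\tau+\mu]$ uniformly in $(\tau,\phi)\in B$; substituting this bound back into the right-hand side of \eqref{1} produces a uniform bound on $u'(t,\tau,\phi)$ on $[\tau,\tau+\mu]$. Hence the translated profiles $\psi(s):=u(\tau+\mu+s,\tau,\phi),\ s\in[-h,0]$, form a uniformly bounded and equicontinuous family in $C[-h,0]$, which is relatively compact by Arzel\`a–Ascoli. Combined with compactness of $S^1$, this shows that $F^{\mu}(B)$ is relatively compact in $X$, i.e., $F^{\mu}$ is completely continuous for every $\mu \geq h$.

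The third hypothesis amounts to proving that $\bigcup_{\mu\geq 0}F^{\mu}(B)$ is bounded in $X$ for every bounded $B$. The short-time part (for $t$ in an interval of length $2h$) is again controlled by Gronwall. For the long-time part one revisits the normalization-and-compactness argument in the second half of the proof of Lemma \ref{Lem7}: if the bound \eqref{bounds} failed uniformly on $B$, rescaling along a blow-up subsequence would yield in the limit a nontrivial solution $w$ of the unperturbed equation \eqref{eq} with $\|w_{\mu}\|\geq \|w_{2h}\|$, contradicting the uniform exponential decay guaranteed by \eqref{ga}. Thus uniform bounds hold for all $t$ large enough, uniformly on $B$.

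With the three ingredients in place, Hale's theorem in \cite{HMO} yields the existence of a compact, invariant, connected global attractor ${\cal A}(F)=\bigcap_{\mu\geq 0}F^{\mu}{\cal K}$ attracting every bounded subset of $X$, where ${\cal K}$ is any compact set attracting compact sets. The main delicate point is the uniformity in $B$ of the constants $\mu$ and $t_0$ that appear in Lemma \ref{Lem7}; these are however uniform because the exponential decay rate for \eqref{eq} from \eqref{ga} does not depend on the initial datum, so the Arzel\`a–Ascoli/normalization arguments transfer verbatim from single solutions to bounded sets of solutions.
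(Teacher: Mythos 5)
Your proposal is correct and follows essentially the same route as the paper: the published proof simply invokes Corollary \ref{Co1} (point dissipativity) together with Theorem 5.3 of \cite{HMO1}, the abstract existence theorem for compact global attractors of eventually compact semiflows of retarded functional differential equation type. What you add --- the Arzel\`a--Ascoli verification of complete continuity of $F^{\mu}$ for $\mu\geq h$ and the blow-up/normalization argument for boundedness of orbits of bounded sets --- is precisely the standard content packaged inside that cited theorem for retarded equations, so your write-up is a correct unpacking of the same argument rather than a different one.
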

\begin{proof} The existence of the compact global attractor ${\cal A}(F)$
with the mentioned properties is a direct consequence of Corollary \ref{Co1}
and Theorem 5.3 from \cite{HMO1}.\qed
\end{proof}
It was shown in \cite{17, ILT, kr, PT} that { if $a+b\not =0$, then equation (\ref{meq}) has at least one $T$-periodic solution or, equivalently, 
${\cal A}(F)$ always contains  at least one simple closed curve $\Sigma ^1$
trivially covering the base $S^1$. }Moreover,  
${\cal A}(F)= \Sigma ^1$ under some additional assumptions (e.g. if one of the following three conditions is satisfied: 
i) $|b|+a <0$; \ ii) $(|a|+|b|)h < 1, \ b < -a < 0$; \
iii) $0 < - bh < 3/2$ and $ \ a = 0$). 
In  general, ${\cal A}(F)$ 
does not coincide with $\Sigma ^1$: as it was proved in \cite{PT} (see also Section \ref{S31} below), the global 
attractor can have several periodic orbits.  Moreover, in Section \ref{sec:chaos} of the paper
we will show that ${\cal A}(F)$  can even possess an infinite set of periodic 
solutions as well as some solutions with `chaotic' behavior.

\subsection{Continuity of the return map}

Again, we will assume all the conditions of Lemma \ref{Lem7} hold. We begin by considering the initial value problem $u(\tau)=\tilde f(\tau)$ for 
the ordinary differential equation
\begin{equation}
\label{ode}
u'(t) = (a+b)u(t) + f(t). 
\end{equation}
Clearly, there exists some $\delta >0$ such that $f(t)-f(\tau)$ does not change sign on each of the open  intervals $(\tau-\delta, \tau)$,   $(\tau, \delta +\tau)$. 
A straightforward computation shows that the solution $u$ of the mentioned initial value problem satisfies, for all $0<|t-\tau|<\delta$,
$$
\frac{u(t)-u(\tau)}{t-\tau}(f(t)-f(\tau))= \frac{1}{t-\tau} \int_\tau^te^{(a+b)(t-s)}(f(t)-f(\tau))(f(s)-f(\tau))ds >0. 
$$
This relation implies the following result:
\begin{lemma} \label{LL9} If $\tau \in(0,\beta)$ [respectively, $\tau \in (\beta, T)$] then the solution $u$ of  the initial value problem $u(\tau)=\tilde f(\tau)$ for (\ref{ode}) has a strict local maximum  [respectively, strict local minimum]  at $\tau$. Moreover, $\tau$ is the unique critical point of $u$ in some open neighbourhood of $\tau$. 
If $\tau = \beta$, then $u'(t)>0$ for all $t$ in some punctured  neighbourhood of $\beta$. If $\tau = T$, then $u'(t)<0$ for all $t$ in some punctured  neighbourhood of $T$. 
\end{lemma}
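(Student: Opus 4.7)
The plan is to combine the identity displayed just before the lemma with the first-order relation
\[
u'(t)=(a+b)(u(t)-u(\tau))+(f(t)-f(\tau)),
\]
which follows from (\ref{ode}) since $(a+b)u(\tau)+f(\tau)=0$; in particular $u'(\tau)=0$. Setting $v(t):=u(t)-u(\tau)$, the identity of the excerpt can be read as the variation of constants representation
\[
v(t)=\int_\tau^t e^{(a+b)(t-s)}(f(s)-f(\tau))\,ds,
\]
so that $u'(t)=(f(t)-f(\tau))[1+(a+b)\rho(t)]$, where $\rho(t):=v(t)/(f(t)-f(\tau))$, valid for $t\ne\tau$ with $f(t)\ne f(\tau)$.

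The key estimate is $|\rho(t)|\le e^{|a+b|\delta}|t-\tau|$. Since $s$ lies strictly between $\tau$ and $t$ in the integral defining $v(t)$, strict monotonicity of $f$ on the relevant side of $\tau$---inside $I_1$, or $I_2$, or, near $\tau=T$, on the periodic extension $[T,T+\delta]\cong [0,\delta]\subset I_1$---yields $|f(s)-f(\tau)|\le|f(t)-f(\tau)|$, while $e^{(a+b)(t-s)}\le e^{|a+b|\delta}$. Consequently $\rho(t)\to 0$ as $t\to\tau$, so $\operatorname{sgn} u'(t)=\operatorname{sgn}(f(t)-f(\tau))$ on a sufficiently small punctured neighborhood of $\tau$.

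All four cases of the lemma then follow uniformly. If $\tau\in(0,\beta)$, strict monotonicity of $f$ on $I_1$ makes $f(t)-f(\tau)$ positive for $t<\tau$ and negative for $t>\tau$, so $u'$ changes sign from $+$ to $-$ at $\tau$, yielding a strict local maximum and uniqueness of the critical point in the neighborhood. The case $\tau\in(\beta,T)$ is symmetric and produces a strict local minimum. For $\tau=\beta$, hypothesis \textbf{(H)} gives $f(\beta)=\min f=0$, so $f(t)-f(\beta)>0$ on both sides and $u'(t)>0$ throughout the punctured neighborhood. For $\tau=T$, periodicity yields $f(T)=f(0)=\max f$ and $f(t)<f(T)$ nearby, so $u'(t)<0$.

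The main technical obstacle is the estimate $|\rho(t)|\le e^{|a+b|\delta}|t-\tau|$ under the sole assumption that $f$ is continuous: differentiability of $f$ at $\tau$ is unavailable, and one has to lean exclusively on the strict monotonicity of $f$ on each half of a period; for $\tau=T$ one must additionally invoke periodicity to reach the monotone piece $I_1$ lying across the period boundary.
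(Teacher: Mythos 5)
Your proof is correct. You start from the same variation--of--constants identity that the paper displays just before the lemma, but you then push it one step further: writing $u(t)-u(\tau)=\rho(t)\,(f(t)-f(\tau))$ with $|\rho(t)|\le e^{|a+b|\delta}|t-\tau|$ (which indeed only needs continuity plus one--sided strict monotonicity of $f$, since $s$ lies between $\tau$ and $t$) and substituting back into $u'(t)=(a+b)(u(t)-u(\tau))+(f(t)-f(\tau))$, you obtain $\operatorname{sgn}u'(t)=\operatorname{sgn}(f(t)-f(\tau))$ near $\tau$, which settles all four cases at once, including the uniqueness of the critical point. The paper instead uses the displayed identity only to determine the sign of $u(t)-u(\tau)$, then argues the sign of $u'$ from the equation itself (using $a+b<0$ and $f\ge 0$) and rules out a second critical point by a qualitative zero--isocline/monotonicity argument; moreover it writes out only the case $\tau=\beta$ and declares the others similar. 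What your route buys is uniformity and a quantitative statement (the factor $1+(a+b)\rho(t)$ stays positive), at the price of the extra estimate on $\rho$; what the paper's route buys is brevity and consistency with the isocline picture already developed in the proof of Lemma~\ref{L1}. Both arguments are sound, and yours is, if anything, the more self-contained of the two.
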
 
\begin{proof}Suppose, for example, that $\tau = \beta$. Then $u(\tau) =0$ and $u(t)<0$ for $t \in (\tau-\delta, \tau)$. Since $a+b<0$ this implies that 
$u'(t)>0$ for $t \in (\tau-\delta, \tau)$. Similarly, $u(t)>0$ for $t \in (\tau,\delta+ \tau)$.   If we suppose that $u'(s_0)=0$ for some $s_0 \in (\tau,\delta+ \tau)$ then 
$u(t)$  is strictly decreasing on $(\beta, s_0)$ (since $f(t)$ is strictly increasing on the same interval),
a contradiction. The other cases can be established in a similar fashion. 
\qed 
\end{proof}

Set $K= \tilde f([0,\beta])$. The goal of this subsection
is to describe the continuity properties of the map ${\cal R}: K \to K$.
First, we will analyse the trajectory  of the solution $u(s,p)$
on the interval $(q,\nu^*)$  {(see Subsection~\ref{construction} and Figure~\ref{FigSc} for the notation related to the definition of ${\cal R}$). We next state an assumption that will guarantee good continuity properties 
of ${\cal R}$ and admits practical verification:}

\vspace{2mm}

\noindent ({\bf M}) For each $p \in K$ it holds that $u(s,p) < u(\nu^*,p)$ for all $s \in [\nu^*-h,\nu^*)$. 

\vspace{2mm}

Note that the last assumption needs to be checked only for  those $p$ satisfying  ${\cal R}(p) < p$ because of the following simple  result:
\begin{lemma} \label{LL11} Suppose that $p\in K$ and ${\cal R}(p) \geq p$. Then 
\begin{equation}
\label{ineqM}
u(s,p) < u(\nu^*,p)\, ,\ s \in [\nu^*-h,\nu^*).
\end{equation}
\end{lemma}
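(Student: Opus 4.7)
The plan is to argue by contradiction. Suppose that there exists $\bar{s}\in[\nu^*-h,\nu^*)$ with $u(\bar{s},p)=u(\nu^*,p)$; the defining property (\ref{condition}) of $\nu^*$ already gives $u(s,p)\leq u(\nu^*,p)$ on $[\nu^*-h,\nu^*+\varepsilon]$, so equality at some $\bar s$ is the only way (\ref{ineqM}) can fail. I will then exhibit a point $\sigma\in(q,\nu^*)$ which also satisfies (\ref{condition}), contradicting the minimality of $\nu^*$ as the smallest $\nu>q$ with that property.

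Set $M:=\max_{s\in[q,\nu^*]} u(s,p)$. From $u(\bar s,p)=u(\nu^*,p)={\cal R}(p)\geq p=u(q,p)$ I obtain $M\geq {\cal R}(p)\geq p$. Let $\sigma$ be the infimum of the (closed) set of points in $(q,\nu^*]$ at which $u(\cdot,p)$ attains $M$, so that $u(\sigma,p)=M$. That $\sigma>q$ follows either from $u(q,p)=p<M$ when $M>p$, or, when $M=p$, from Lemma~\ref{Lr2} applied at $q$, which forces $u(s,p)<p$ on a right-neighbourhood of $q$ and so keeps the argmax bounded away from $q$. That $\sigma<\nu^*$ follows because either $M=u(\nu^*,p)$ and then $\sigma\leq\bar s<\nu^*$, or $M>u(\nu^*,p)$ and continuity keeps $u<M$ on a left-neighbourhood of $\nu^*$.

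It remains to verify that $\sigma$ satisfies (\ref{condition}) with $\varepsilon:=\nu^*-\sigma>0$. On $[\sigma,\sigma+\varepsilon]=[\sigma,\nu^*]\subset[q,\nu^*]$ the bound $u(\cdot,p)\leq M=u(\sigma,p)$ is immediate from the definition of $M$. For the left window $[\sigma-h,\sigma]$, the portion contained in $[q,\sigma]$ is again bounded by $M$, and since $\sigma>q$ we have $\sigma-h>q-h$, so the remaining portion lies inside $[q-h,q]$ where $u\equiv p\leq M$ by the initial condition. Thus $\sigma$ is a qualified local maximum lying strictly between $q$ and $\nu^*$, which contradicts the minimality of $\nu^*$ and proves the lemma. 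The one delicate step, and the reason I take the infimum over the open interval $(q,\nu^*]$ rather than $[q,\nu^*]$, is the borderline case ${\cal R}(p)=p$ in which $u(q,p)$ itself attains $M$; handling this is the main obstacle and relies essentially on Lemma~\ref{Lr2} to push the argmax off of the initial endpoint.
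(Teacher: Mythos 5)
Your construction of $\sigma$ works in the case ${\cal R}(p)>p$ (and in the sub-case $M>u(\nu^*,p)$), but there is a genuine gap in the borderline case ${\cal R}(p)=p$ — exactly the case your last paragraph flags. When $M=u(\nu^*,p)=p$, your argument that $\sigma<\nu^*$ rests on the inequality $\sigma\leq\bar s$, which presupposes that $\bar s$ belongs to the set $(q,\nu^*]$ over which you take the infimum, i.e.\ that $\bar s>q$. Nothing in your proof excludes the possibility that the offending point lies in the initial interval, $\bar s\in[\nu^*-h,q]$, where $u(\bar s,p)=p=u(\nu^*,p)$ holds automatically by the initial condition. In that scenario one may perfectly well have $u(t,p)<p$ for all $t\in(q,\nu^*)$, so that the set of points of $(q,\nu^*]$ attaining $M$ reduces to $\{\nu^*\}$, your $\sigma$ equals $\nu^*$, and no contradiction with the minimality of $\nu^*$ is obtained. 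Lemma~\ref{Lr2} only pushes the argmax away from $q$ on the right; it does not prevent the window $[\nu^*-h,\nu^*)$ from reaching back into $[q-h,q]$.

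The missing ingredient — which is essentially the entire content of the paper's proof in this case — is Lemma~\ref{Lr}: since $u(\nu^*,p)=\tilde f(\nu^*)$ with $\nu^*\pmod{T}\in[0,\beta)$, the equality ${\cal R}(p)=p=\tilde f(q)$ together with the injectivity of $\tilde f$ on $[0,\beta)$ forces $\nu^*\equiv q \pmod{T}$, hence $\nu^*-q\geq T>h$. This places $[\nu^*-h,\nu^*)$ inside $(q,\nu^*)$ and rules out $\bar s\leq q$; equivalently, the paper observes that a failure of (\ref{ineqM}) with ${\cal R}(p)=p$ would force $\nu^*-q\leq h$, contradicting $\nu^*-q\geq T>h$. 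Once $\bar s>q$ is secured, your leftmost-argmax argument does close the proof, and for ${\cal R}(p)>p$ it is a reasonable way of making precise what the paper dismisses as ``a consequence of the definition of ${\cal R}$''.
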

\begin{proof} 
If ${\cal R}(p) > p$, the result is a consequence of the definition of ${\cal R}$. Assume that ${\cal R}(p) = p$ and (\ref{ineqM})  does not hold. This means that, if $p= \tilde f(q)$, then $\nu^*-q\leq h$. But this is a contradiction, because Lemma~\ref{Lr} ensures that $\nu^*-q\geq T>h$ if ${\cal R}(p) = p$.
 \qed
\end{proof} 
\begin{lemma} \label{LL12}   Assume that  {\rm({\bf M})} holds. Then for each $p=\tilde f(q) \in K$ there exists a maximal  non-empty 
interval $(\mu (q), \nu^*)$ such that 
$u(t,p) > u(s,p), s \in [t-h,t)$ for  all $t \in (\mu(q), \nu^*)$. In particular, $u'(t,p)> 0$ (possibly, except for one point $s = \beta + jT$   where $u(s,p)=0$) and  $u(t,p)$ satisfies  (\ref{ode}) on $(\mu(q), \nu^*)$. 
\end{lemma}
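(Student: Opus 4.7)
The plan is to build the interval $(\mu(q),\nu^*)$ by a ``seed + maximality'' scheme: first exhibit a small left-neighborhood of $\nu^*$ on which the defining condition holds, then take the maximal such interval, and finally read off the ODE reduction and the sign of $u'$. For the seed, I want to produce $\delta>0$ with $u(t,p)>u(s,p)$ for every $t\in(\nu^*-\delta,\nu^*)$ and every $s\in[t-h,t)$. I would split the window $[t-h,t)$ into a \emph{far} part $[t-h,\nu^*-\varepsilon]$, on which assumption ({\bf M}) and compactness furnish a uniform gap $u(\nu^*,p)-\max_{s\in[\nu^*-h,\nu^*-\varepsilon]}u(s,p)>0$ that propagates to $t$ close to $\nu^*$ by continuity of $u(\cdot,p)$, and a \emph{near} part $(\nu^*-\varepsilon,t)$, on which the inequality reduces to strict monotone increase of $u$ on a one-sided neighborhood of $\nu^*$. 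For the latter, I would couple with (\ref{ode}): whenever the max condition $\max_{s\in[t-h,t]}u(s,p)=u(t,p)$ holds, the DDE collapses to the ODE (\ref{ode}), and Lemma \ref{LL9} (applied to $\nu^*\bmod T\in(0,\beta)$, the generic subcase) gives strict increase just to the left of $\nu^*$ for the ODE solution $v$ with $v(\nu^*)=\tilde f(\nu^*)$; the strict-gap information from ({\bf M}) then makes the hypothesis $M=u$ self-sustaining on a left neighborhood, closing the bootstrap.

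With the seed in place, set
\[
\mu(q):=\inf\bigl\{\mu'<\nu^*:\,u(t,p)>u(s,p)\text{ for all }t\in(\mu',\nu^*),\,s\in[t-h,t)\bigr\}.
\]
The set is nonempty by the previous step, so $\mu(q)<\nu^*$; the defining condition propagates to the union $(\mu(q),\nu^*)$, and maximality is immediate from the infimum. On this interval $\max_{s\in[t-h,t]}u(s,p)=u(t,p)$, so the DDE collapses to (\ref{ode}). For strict monotone increase, take $t_1<t_2$ in $(\mu(q),\nu^*)$: if $t_2-t_1\le h$, the defining condition at $t_2$ applied to $s=t_1$ yields $u(t_2,p)>u(t_1,p)$; if $t_2-t_1>h$, chain via intermediate points of step at most $h$. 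Hence $u(\cdot,p)$ is strictly increasing on $(\mu(q),\nu^*)$, and therefore $u'(\cdot,p)\ge 0$ there.

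For the $u'>0$ refinement: by (\ref{ode}), $u'(t_0)=0$ iff $u(t_0)=\tilde f(t_0)$; applying Lemma \ref{LL9} to $\tau=t_0\bmod T$, every option except $t_0\bmod T=\beta$ yields a strict local extremum or punctured strict decrease of the ODE solution at $t_0$, contradicting the strict increase just established. Hence $t_0\bmod T=\beta$ and $u(t_0)=\tilde f(\beta)=0$; strict monotonicity then admits at most one such $t_0$ in $(\mu(q),\nu^*)$. The main obstacle is the seed step: ({\bf M}) only controls $u(s,p)<u(\nu^*,p)$ on $[\nu^*-h,\nu^*)$ without a uniform margin as $s\uparrow\nu^*$, so upgrading this to strict monotone increase of $u$ on a left neighborhood of $\nu^*$ requires ruling out $C^1$ oscillations near the strict maximum, which is precisely where the coupling with (\ref{ode}) through Lemma \ref{LL9} does the work.
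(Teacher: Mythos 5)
Your overall architecture (seed on a left neighbourhood of $\nu^*$, then pass to the maximal interval, then read off the ODE reduction, strict increase, and the characterization of critical points via Lemma \ref{LL9}) is sound, and everything from the maximality step onward is correct. The far-part estimate is also fine: ({\bf M}) plus compactness gives a uniform gap on $[\nu^*-h,\nu^*-\varepsilon]$, and continuity of $u$ near $\nu^*-h$ lets you absorb the short overhang $[t-h,\nu^*-h)$. The problem is the near part, which you yourself flag as ``the main obstacle'': the bootstrap you propose is circular. To say that the DDE ``collapses to (\ref{ode})'' near $\nu^*$ and then invoke Lemma \ref{LL9} to get strict increase of $u$ to the left of $\nu^*$, you must already know that $\max_{s\in[t-h,t]}u(s,p)=u(t,p)$ on a left neighbourhood of $\nu^*$ --- which is exactly the statement being proved. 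Declaring the hypothesis ``self-sustaining'' does not start the bootstrap: at a ``bad'' time $t$ where $U(t):=\max_{[t-h,t]}u>u(t)$ the equation is $u'=au+bU+f$, not (\ref{ode}), so Lemma \ref{LL9} says nothing there, and there is no differential inequality in your argument that propagates the max condition backward in time.

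The ingredient you are missing is the \emph{minimality} of $\nu^*$, which is what the paper's one-line proof rests on. If bad times $t_n\uparrow\nu^*$ existed, take $s_n$ a (leftmost) maximizer of $u$ on $[t_n-h,t_n]$ with $u(s_n)\geq u(t_n)$ and $s_n<t_n$; your far-part gap forces $s_n\to\nu^*$ and, together with the continuity control near $\nu^*-h$, yields $u(s_n)=\max\{u(s):s\in[s_n-h,t_n]\}$. Then each $s_n\in(q,\nu^*)$ satisfies condition (\ref{condition}) with $\varepsilon=t_n-s_n>0$, contradicting that $\nu^*$ is the \emph{smallest} such point. Nowhere in your write-up is the minimality of $\nu^*$ invoked, and without it the seed cannot be established; ({\bf M}) alone does not exclude $C^1$ oscillations accumulating at the strict maximum, and Lemma \ref{LL9} cannot be brought to bear until the max condition is already known. (A further minor point: inside this lemma you cannot yet exclude $\nu^*\equiv 0\pmod T$ by appealing to Corollary \ref{C12a}, since that corollary is derived from the present lemma; so the ``generic subcase'' remark also needs care.)
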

\begin{proof} Indeed, otherwise there exist increasing  sequences $s_n< t_n< \nu^*, \ s_n, t_n \to \nu^*$ such that 
$
u(t_n) \leq u(s_n) = \max\{u(s), s \in [s_n-h,t_n]\}.  
$
However, this contradicts the definition of $\nu^*$. 

As a consequence, $u(t,p)$ satisfies (\ref{ode}) on $(\mu(q), \nu^*)$. By Lemma \ref{LL9},  $u'(s,p)=0$ if and only if $s = \beta + jT$ for some integer $j$
and  $u(s,p)=\tilde f(\beta)=0$.  \qed
\end{proof}
\begin{corollary}\label{C12a} For each $p \in K$, the inequality 
 ${\cal R}(p) < \max \{\tilde f(t), \ t \in \R\}$ holds.
\end{corollary}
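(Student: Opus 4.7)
The plan is to argue by contradiction, exploiting the fact that under hypothesis (H), the unique point in $[0,\beta]$ where $\tilde f$ attains its global maximum is $t=0$, together with the fine information about the ODE (\ref{ode}) provided by Lemma~\ref{LL9}.

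First I would suppose, for the sake of contradiction, that ${\cal R}(p) = \max_{t\in\R}\tilde f(t) = \tilde f(0)$ for some $p=\tilde f(q) \in K$. By Lemma~\ref{Lr} applied at $\tau=\nu^*$, we have ${\cal R}(p)=u(\nu^*,p)=\tilde f(\nu^*)$ with $\nu^*\pmod T \in [0,\beta)$. Since $f$ is strictly monotone on $[0,\beta]$ (so that $\tilde f$ attains $\tilde f(0)=\max \tilde f$ on $[0,\beta)$ only at the left endpoint), this forces $\nu^*\pmod T = 0$, i.e.\ $\nu^* = kT$ for some integer $k\ge 1$.

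Next I would invoke Lemma~\ref{LL12}: there is a maximal non-empty interval $(\mu(q),\nu^*)$ on which $u(t,p)$ coincides with the solution of the ordinary differential equation (\ref{ode}) and satisfies $u'(t,p)>0$ except possibly at the isolated zeros of $u(\cdot,p)$. In particular, $u(t,p)$ is strictly increasing on some one-sided neighbourhood $(\nu^*-\eta,\nu^*)$.

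The contradiction now comes from the last clause of Lemma~\ref{LL9}. Because $\nu^* \equiv 0 \equiv T \pmod T$ and $u(t,p)$ solves (\ref{ode}) through the point $(\nu^*,\tilde f(\nu^*))$ on the interval $(\mu(q),\nu^*)$, Lemma~\ref{LL9} (applied to the ODE solution pinned at $\tau=\nu^*$, using periodicity of $f$) yields $u'(t,p)<0$ on some punctured neighbourhood of $\nu^*$; in particular on a left-hand interval $(\nu^*-\eta',\nu^*)$. This directly contradicts the conclusion drawn from Lemma~\ref{LL12}, and therefore ${\cal R}(p)<\max_{t\in\R}\tilde f(t)$, which is exactly the assertion.

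The main (and essentially only) subtlety is the correct alignment of the periodic labelling: one must check that the statement ``$\tau=T$'' in Lemma~\ref{LL9} transfers verbatim to any $\nu^*=kT$, which follows from the $T$-periodicity of $f$ in equation (\ref{ode}). Everything else is bookkeeping.
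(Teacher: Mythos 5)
Your argument follows the paper's own route: assume ${\cal R}(p)=\max_{t\in\R}\tilde f(t)$, use Lemma~\ref{Lr} to pin $\nu^*\pmod T=0$, extract from Lemma~\ref{LL12} that $u(\cdot,p)$ solves (\ref{ode}) and increases on a left neighbourhood of $\nu^*$, and contradict the last clause of Lemma~\ref{LL9}. The one step you gloss over is the application of Lemma~\ref{LL12}: that lemma is stated under hypothesis ({\bf M}), which is not among the standing assumptions of the corollary, and you invoke it with no justification. The paper closes this gap by noting that $\max_{t}\tilde f(t)=\tilde f(0)\geq \tilde f(q)=p$ (since $\tilde f$ decreases on $[0,\beta]$), so the contradiction hypothesis forces ${\cal R}(p)\geq p$, and then Lemma~\ref{LL11} delivers exactly the inequality $u(s,p)<u(\nu^*,p)$ for $s\in[\nu^*-h,\nu^*)$ that the proof of Lemma~\ref{LL12} requires at this particular $p$. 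With that one line inserted your proof coincides with the paper's; the remaining bookkeeping (identifying $\nu^*=kT$ and transferring the clause ``$\tau=T$'' of Lemma~\ref{LL9} to $\tau=kT$ by $T$-periodicity of $f$) is correct as you present it.
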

\begin{proof} Indeed, if ${\cal R}(p)=  \max \{\tilde f(t), \ t \in \R\} \geq p$ for some $p\in K$, 
 Lemmas~\ref{LL11} and  \ref{LL12} imply that $u(t, p)$ satisfies (\ref{ode}) and 
increases on some left-hand side neighbourhood
of $\nu^*= 0\pmod{T}$. However, this contradicts the last assertion of    Lemma \ref{LL9}. \qed \end{proof}

\begin{corollary}\label{C12}
  With the notations of Lemma~\ref{LL12}, $\mu(\beta)=\beta$. 
\end{corollary}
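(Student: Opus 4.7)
The plan is to exploit the fact that when $q=\beta$, hypothesis~(\textbf{H}) forces $p=\tilde f(\beta)=0$, so the initial function is $u(\cdot,p)\equiv 0$ on $[\beta-h,\beta]$, and to identify $u(\cdot,p)$ on a right-neighbourhood of $\beta$ with the solution $v$ of the auxiliary ODE~(\ref{ode}) starting from $v(\beta)=0$.

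First I would apply Lemma~\ref{LL9} with $\tau=\beta$ to obtain $\epsilon>0$ such that the ODE solution $v$ satisfies $v'(t)>0$ on $(\beta,\beta+\epsilon)$; combined with $v(\beta)=0$, this makes $v$ strictly increasing and strictly positive on that interval. Extending $v$ by zero to $[\beta-h,\beta]$ yields a continuous function, and I would then verify that this extension solves the DDE~(\ref{meq}) on $[\beta,\beta+\epsilon]$. The key observation is that for $t\in(\beta,\beta+\epsilon)$ one has $\max_{s\in[t-h,t]} v(s)=v(t)$, since on $[t-h,\beta]$ the extension vanishes and on $[\beta,t]$ the function $v$ grows strictly up to $v(t)>0$; hence (\ref{meq}) collapses to (\ref{ode}) on that interval, which $v$ satisfies by construction. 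Lipschitz uniqueness of solutions to (\ref{meq}) (recalled in the Introduction) then forces $u(\cdot,p)\equiv v$ on $[\beta-h,\beta+\epsilon]$.

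To conclude, I would read off the two one-sided bounds on $\mu(\beta)$. For $t\in(\beta,\beta+\epsilon)$ and $s\in[t-h,t)$, either $s\le\beta$ and $u(s,p)=0<u(t,p)$, or $s>\beta$ and $u(s,p)<u(t,p)$ by the strict monotonicity of $v$; consequently $(\beta,\beta+\epsilon)$ lies in the maximal interval of Lemma~\ref{LL12}, giving $\mu(\beta)\le\beta$. Conversely, $u(\beta,p)=0=u(s,p)$ for every $s\in[\beta-h,\beta)$, so the strict-maximum condition defining $(\mu(\beta),\nu^*)$ fails at $t=\beta$, whence $\mu(\beta)\ge\beta$, and equality follows. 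The only mildly delicate step is the identification $u(\cdot,p)\equiv v$: it is essential that the $\max$-functional collapse to $v(t)$ itself, which is precisely what the strict monotonicity granted by Lemma~\ref{LL9} provides; the rest of the argument is routine.
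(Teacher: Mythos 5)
Your setup is sound and uses the same key ingredient as the paper (Lemma~\ref{LL9} applied at $\tau=\beta$, where $p=\tilde f(\beta)=0$, plus the identification of $u(\cdot,0)$ with the solution of (\ref{ode}) via uniqueness), and your argument that $\mu(\beta)\geq\beta$ is correct. But there is a genuine gap in the other inequality. The interval $(\mu(q),\nu^*)$ of Lemma~\ref{LL12} is the \emph{maximal interval with right endpoint $\nu^*$} on which the property $u(t,p)>u(s,p)$, $s\in[t-h,t)$, holds throughout; it is not the union of all intervals where the property holds. Having verified the property only on a small right-neighbourhood $(\beta,\beta+\epsilon)$ does not place that neighbourhood inside $(\mu(\beta),\nu^*)$: if the monotonicity of $u(\cdot,0)$ broke down at some $t_1\in(\beta+\epsilon,\nu^*)$, you would get $\mu(\beta)\geq t_1>\beta$. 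So the inference ``$(\beta,\beta+\epsilon)$ lies in the maximal interval, hence $\mu(\beta)\le\beta$'' is a non sequitur; you must show the property on \emph{all} of $(\beta,\nu^*(\beta))$.

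This is exactly what the paper's (terse) proof supplies: after Lemma~\ref{LL9} gives the initial increase, one argues that the graph of $u(t,0)$ keeps increasing until its first intersection, at some $z\in(\beta,\beta+T)$, with the decreasing branch of $\tilde f$ on $[T,\beta+T)$ (that intersection point is $(\nu^*(\beta),{\cal R}(0))$). The mechanism is the isocline argument already used in Lemma~\ref{L1}(I): as long as $u$ is increasing and positive it satisfies (\ref{ode}), where $u'=(a+b)\bigl(u-\tilde f(t)\bigr)$ with $a+b<0$, so $u$ increases precisely while it stays below the curve $x=\tilde f(t)$; a first touching of that curve at some $t_0\in(\beta,T]$ is excluded by Lemma~\ref{LL9} (the ODE solution through $(t_0,\tilde f(t_0))$ is decreasing immediately to the left of $t_0$, whereas yours is increasing), so $u$ cannot stop increasing before it reaches the decreasing branch of $\tilde f$ past $T$. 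Adding this continuation step closes your proof; without it the conclusion $\mu(\beta)\le\beta$ is unsupported.
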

\begin{proof}
 Lemma \ref{LL9} implies that  the
solution of the initial problem $u(\beta,0) = 0$ 
increases on some right-hand side neighbourhood
of $\beta$. Therefore the graph of $u(t,0)$ increases until
its first intersection at some point $(z, {\cal R}(0)), \ z \in (\beta, \beta + T),$
with the decreasing part of the graph  of the function
$\tilde f:[T,\beta +T) \to (0, +\infty)$. \qed
 \end{proof}

\begin{thm} \label{T15} Assume  that {\rm({\bf M})} holds and  {$\beta<h$}. Let $p_0=\tilde f(q_0)$ be a point of discontinuity
for ${\cal R}$. Then $u(\beta +jT, p_0) = 0$ for some $\beta + jT \in [\mu(q_0), \nu^*(q_0))$  with $j \in \N$.
Furthermore, 
$$
{\cal R}(p_0)= {\cal R}(0), \quad \mbox{$\displaystyle\liminf_{p \to p_0}{\cal R}(p)=0.$} 
$$
\end{thm}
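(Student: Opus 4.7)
My plan is to prove the three claims in order, using the ODE structure on $(\mu(q_0),\nu^*(q_0))$ from Lemma~\ref{LL12} as the common backbone. By that lemma, $u(\cdot,p_0)$ solves \eqref{ode} and is strictly increasing on $(\mu(q_0),\nu^*(q_0))$, with $u'(\cdot,p_0)$ vanishing at most at a single point, which necessarily has the form $\beta+jT$ and satisfies $u(\beta+jT,p_0)=0$. For the first assertion I argue the contrapositive: suppose no such $s_0=\beta+jT$ with $u(s_0,p_0)=0$ lies in $[\mu(q_0),\nu^*(q_0))$. Then $u'(\cdot,p_0)>0$ strictly on $(\mu(q_0),\nu^*(q_0))$, the maximum at $\nu^*(q_0)$ is a transversal tangential touch with $\tilde f$, and hypothesis (\textbf{M}) keeps $u(\cdot,p_0)$ strictly below $u(\nu^*,p_0)$ on $[\nu^*-h,\nu^*)$. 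Continuous dependence of DDE solutions on initial data then shows that for $p$ near $p_0$ the trajectory $u(\cdot,p)$ remains strictly increasing on an interval close to $(\mu(q_0),\nu^*(q_0))$ and attains its first qualified maximum close to $\nu^*(q_0)$ with value close to $\mathcal{R}(p_0)$, contradicting the discontinuity of $\mathcal{R}$ at $p_0$. Hence some $s_0=\beta+jT\in[\mu(q_0),\nu^*(q_0))$ with $u(s_0,p_0)=0$ must exist.

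For the identity $\mathcal{R}(p_0)=\mathcal{R}(0)$, I use that on $[s_0,\nu^*(q_0))$ the solution $u(\cdot,p_0)$ satisfies \eqref{ode} with $u(s_0,p_0)=0$. The time-translation $t\mapsto t-jT$ together with $T$-periodicity of $f$ reduces this to the initial value problem $v(\beta)=0$ for \eqref{ode}, whose solution on $(\beta,\nu^*(\beta))$ coincides with $u(\cdot,0)$ by Corollary~\ref{C12} (since $\mu(\beta)=\beta$). Uniqueness for the linear ODE \eqref{ode} identifies $\nu^*(q_0)$ with $\nu^*(\beta)+jT$, and therefore $\mathcal{R}(p_0)=u(\nu^*(\beta),0)=\mathcal{R}(0)$.

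For $\liminf_{p\to p_0}\mathcal{R}(p)=0$, I first observe that continuity of the map $p\mapsto u(s_0,p)$ together with the discontinuity of $\mathcal{R}$ at $p_0$ forces a sequence $p_n\to p_0$ with $\varepsilon_n:=u(s_0,p_n)>0$ and $\varepsilon_n\to 0$: indeed, if $u(s_0,p)\le 0$ throughout a neighborhood of $p_0$, then $u'(s_0,p)\ge 0$ there and every nearby trajectory continues past $s_0$ to the big peak, which, combined with the identification of the previous paragraph, would give continuity of $\mathcal{R}$ at $p_0$. Since $a+b<0$ and $f(\beta)=0$, equation \eqref{ode} yields $u'(s_0,p_n)=(a+b)\varepsilon_n<0$, so $u(\cdot,p_n)$ attains a local maximum at some $\tau_n<s_0$ with $u(\tau_n,p_n)=\tilde f(\tau_n)\to 0$. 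Confirming that $\tau_n$ is the first qualified maximum of $u(\cdot,p_n)$ after $q(p_n)$ then gives $\mathcal{R}(p_n)\le u(\tau_n,p_n)\to 0^+$, and hence the liminf.

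This last qualification is where I expect the main obstacle. Both hypotheses $\beta<h$ and (\textbf{M}) enter here: $\beta<h$ ensures that for $j\ge 1$ the $h$-window $[\tau_n-h,\tau_n]$ lies strictly to the right of $q(p_n)$, so the initial constant value $p_n$ itself cannot compete with the small $u(\tau_n,p_n)$, while (\textbf{M}) --- applied to $p_0$ and transferred to nearby $p_n$ by uniform convergence on compacta --- supplies both the strict domination needed on the remainder of the window and the absence of earlier qualified maxima on $[q(p_n),\tau_n)$.
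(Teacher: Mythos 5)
Your plan rests on the same machinery as the paper (the ODE structure of Lemma~\ref{LL12}, continuous dependence, and the special role of the points $(\beta+jT,0)$), but it leaves unproved exactly the step that carries the whole theorem: that for $p$ near $p_0$ the solution $u(\cdot,p)$ has no qualified maximum \emph{earlier} than a neighbourhood of $\nu^*(q_0)$. In the contrapositive of the first claim you assert that nearby trajectories ``attain their first qualified maximum close to $\nu^*(q_0)$''; persistence of \emph{a} qualified maximum near $\nu^*(q_0)$ does follow from ({\bf M}) and continuous dependence, but nothing you say prevents a new qualified maximum from appearing somewhere on $[q(p),\nu^*(q_0)-\delta]$, and ${\cal R}(p)$ is defined through the \emph{first} one. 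The same issue reappears in the third claim, where you flag it yourself: without knowing $\nu^*(q_n)=\tau_n$ you cannot write ${\cal R}(p_n)\le u(\tau_n,p_n)$, because ${\cal R}(p_n)=\tilde f(\nu^*(q_n)\bmod T)$ need not be small if $\nu^*(q_n)$ sits in an earlier period. Condition ({\bf M}) does not supply ``the absence of earlier qualified maxima''; it only controls the window $[\nu^*-h,\nu^*)$ at the first qualified maximum of $u(\cdot,p_0)$.

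The missing ingredient is the accumulation argument that the paper makes the centre of its proof. If $\tau_k$ are qualified maxima of $u(\cdot,p_k)$ with $p_k\to p_0$ and $\tau_k\to\mu^*$, then passing to the limit in $u(\tau_k,p_k)=\tilde f(\tau_k)=\max_{s\in[\tau_k-h,\tau_k]}u(s,p_k)$ gives $u(\mu^*,p_0)=\tilde f(\mu^*)=\max_{s\in[\mu^*-h,\mu^*]}u(s,p_0)$, and Lemma~\ref{Lr} forces $\mu^*\bmod T\in[0,\beta]$ (now the closed interval). If $\mu^*\bmod T\in[0,\beta)$, Lemma~\ref{Lr2} makes $u(\cdot,p_0)$ decrease after $\mu^*$, so $\mu^*$ is itself a qualified maximum of $u(\cdot,p_0)$ and the minimality of $\nu^*(q_0)$ (together with the persistent maximum near $\nu^*(q_0)$, which bounds $\tau_k$ from above) yields $\mu^*=\nu^*(q_0)$ and ${\cal R}(p_k)\to{\cal R}(p_0)$ along that subsequence; the only other possibility is $\mu^*=\beta+jT$ with $u(\mu^*,p_0)=\tilde f(\beta)=0$. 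This dichotomy simultaneously produces the zero claimed in the first assertion, closes the gap in your contrapositive, and gives ${\cal R}(p_k)=\tilde f(\tau_k\bmod T)\to 0$ for the liminf, making your separate construction of a sequence with $u(s_0,p_n)>0$ unnecessary (as written it is also shaky: $u(s_0,p)\le 0$ does not by itself give $u'(s_0,p)\ge 0$). Your derivation of ${\cal R}(p_0)={\cal R}(0)$ by time translation and ODE uniqueness is correct and matches the paper's intent.
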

\begin{proof} Indeed, due to the continuous dependence of solutions on the initial values, 
 $u'(t,p)$ converges uniformly to $u'(t,p_0)$ on $[\mu(q_0), \nu^*(q_0)]$
 as $p\to p_0$.  By Lemma \ref{Lr} and Corollary \ref{C12a}, $\nu^*(q_0) \in (j_0T,j_0T+\beta)$ for some $j_0$. Set $\mu_*(q_0)= \max\{j_0T, \mu(q_0)\}$. 
Then Lemma \ref{LL12} and Corollary \ref{C12} assure that for every $\delta >0$ it holds that 
$u'(t,p)>0$ for all  $t \in [\mu_*(q_0)+\delta, \nu^*(q_0)-\delta)$  if $p$ is sufficiently close to $p_0$. 
This implies that $u(t,p)> u(s,p), $ $ \ s \in   [t-h,t)$, for  $t \in (\mu_*(q_0)+\delta, \nu^*(q_0)-\delta)$ and therefore $u(t,p)$ has a local maximum point $\hat \nu(p)$ such that 
$\hat\nu(p) \to \nu^*(p_0)$ as $p \to p_0$. In addition, $u(t,p)$ is  strictly  monotone in some left and in some right neighbourhoods of  $\hat\nu(p)$ and $u(\hat\nu(p),p)=\max\{u(s,p), \ s \in$  $[\hat\nu(p)-h,\hat\nu(p)]\}$. 

Now, since $\beta <h$, by Lemma~\ref{Lr}, $\hat\nu(p)$ is the absolute maximum point of $u(t,p)$ on the interval $(j_0T, \beta +j_0T)$ containing $\nu^*(q_0)$. 
Therefore, if we suppose that ${\cal R}$ has a discontinuity at $p_0$, it should exist a sequence $p_k=\tilde f(q_k) \to p_0=\tilde f(q_0)$ and a positive integer $j_1< j_0$ such that 
 $\nu^*(q_k) \in [j_1T, \beta +j_1T)$ and  $\nu^*(q_k) \to \mu^*$ $\neq\nu^*(q_0)\pmod{T}.$  Since $u(\nu^*(q_k),p_k)=\max\{u(s,p_k), \ s \in$  $[\nu^*(q_k)-h,\nu^*(q_k)]\}$, in view of  the continuous
 dependence of $u(t,p)$ on the initial data, we  conclude that   $$u(\mu^*,p_0)=\max\{u(s,p_0), \ s \in  [\mu^*-h,\mu^*]\}.$$ 
 
Now,  if $\mu^* < \beta +j_1T$ (i.e. $u(\mu^*, p_0) \not= 0$), then by Lemma \ref{Lr}, 
$u(\mu^*,p_0)=\max\{u(s,p_0), \ s \in  [\mu^*-h,\beta+j_1T]\}$ and $[j_1T, \beta +j_1T)\ni \mu^*=\nu^*(q_0)\in [j_0T, \beta +j_0T)$, a contradiction.  

 Therefore $\mu^* = \beta +j_1T$, 
$$\max\{u(s,p_0), \ s \in  [\mu^*-h,\mu^*]\} = u(\mu^*,p_0) = u(\beta +j_1T,p_0)=0,$$
so that ${\cal R}(p_0)= {\cal R}(0)$  and ${\cal R}(p_k) = u(\nu^*(q_k),p_k)  \to u(\mu^*,p_0) = u(\beta +j_1T,p_0)=0$. 
\qed 
\end{proof}

Theorem~\ref{T15} allows us to find sufficient conditions for the continuity of ${\cal R}$ in some subsets of its domain $K$. We address this task in the next three colloraries.

\begin{corollary} If  $h \in (\beta,T)$, ${\cal R}(p_0) \geq p_0,$  and ${\cal R}(p_0) \not= {\cal R}(0)$,
then ${\cal R}$ is continuous at $p_0$.
\end{corollary}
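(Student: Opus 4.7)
The plan is to prove the corollary by contradiction, invoking Theorem~\ref{T15} as the main tool; the hypotheses are tailor-made to put us into its setup.

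First, I would verify that condition (\textbf{M}) holds at $p_0$. This is exactly the content of Lemma~\ref{LL11}: the hypothesis ${\cal R}(p_0)\geq p_0$ yields $u(s,p_0)<u(\nu^*(q_0),p_0)$ for $s\in [\nu^*(q_0)-h,\nu^*(q_0))$. This is the only place where the assumption ${\cal R}(p_0)\geq p_0$ enters; it is what lets us apply the machinery of Lemma~\ref{LL12} (strict growth on $(\mu(q_0),\nu^*(q_0))$) and hence the conclusions of Theorem~\ref{T15} to the particular orbit starting from $p_0$. Next, note that the hypothesis $h\in(\beta,T)$ gives $\beta<h$, so the other standing requirement of Theorem~\ref{T15} is also met.

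Now suppose, for contradiction, that ${\cal R}$ is discontinuous at $p_0$. Theorem~\ref{T15} then delivers two pieces of information: that there exists some $j\in\N$ with $u(\beta+jT,p_0)=0$ inside $[\mu(q_0),\nu^*(q_0))$ (a cusp-type catastrophe as described in the introduction), and, crucially, the equality
\[
{\cal R}(p_0)={\cal R}(0).
\]
This directly contradicts the hypothesis ${\cal R}(p_0)\neq {\cal R}(0)$. Hence no such discontinuity can occur and ${\cal R}$ is continuous at $p_0$.

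The only potentially delicate point is whether the proof of Theorem~\ref{T15} really applies when we have established (\textbf{M}) only at the single point $p_0$ rather than on all of $K$. Inspecting that proof, the role of (\textbf{M}) is to guarantee the strict-monotonicity interval $(\mu(q_0),\nu^*(q_0))$ for the base orbit $u(\cdot,p_0)$ via Lemma~\ref{LL12}; the behaviour of $u(\cdot,p)$ for nearby $p$ is then recovered by $C^1$-continuous dependence on initial data and does not require (\textbf{M}) at those $p$. So Lemma~\ref{LL11} at $p_0$ is genuinely sufficient, which is the main (mild) obstacle to be addressed explicitly in writing the proof.
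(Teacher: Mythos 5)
Your proof is correct and follows exactly the paper's route: the paper's own proof is the one-line observation that the corollary follows from Lemma~\ref{LL11} (which converts ${\cal R}(p_0)\geq p_0$ into the condition {\rm({\bf M})} at $p_0$) combined with Theorem~\ref{T15} (whose conclusion ${\cal R}(p_0)={\cal R}(0)$ at any discontinuity point contradicts the hypothesis). Your closing remark that {\rm({\bf M})} is only needed at the single point $p_0$, since the argument of Theorem~\ref{T15} uses it solely to obtain strict monotonicity of $u(\cdot,p_0)$ on $(\mu(q_0),\nu^*(q_0))$ before passing to nearby $p$ by continuous dependence, is an accurate reading of that proof and makes explicit a point the paper leaves implicit.
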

\begin{proof} It is a consequence of   Lemma~\ref{LL11} and Theorem~\ref{T15}. \qed
\end{proof}

\begin{corollary} \label{cor17} Suppose that $h \in (\beta, T)$,  and let  $p = \tilde f(q)\in K.$ If the following inequality holds:
  \begin{equation}
 \int_0^h e^{a s}(f(q + h - s)-f(q))ds \geq 0,
\label{fs}
\end{equation}
then ${\cal R}$ is continuous at   $p$,   ${\cal R}(p) \geq p,$ and  $\nu^*(q)$ $\in (T,T+\beta).$ Moreover, there exists $r \in (q,q+h]$
such that $u(r,p)=p$, $u'(r,p) >0$, and $u(t,p) < p$ for all $t \in (q,r]$. 
\end{corollary}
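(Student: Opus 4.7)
The plan is to compare $u(\cdot,p)$ with the solution $\tilde u$ of the auxiliary linear ODE $\tilde u'(t)=a\tilde u(t)+bp+f(t)$ with $\tilde u(q)=p$, whose trajectory coincides with $u(\cdot,p)$ as long as the functional maximum in (\ref{meq}) equals $p$. Using variation of constants and $p=-f(q)/(a+b)$, one derives
$$\tilde u(t)-p=\int_q^t e^{a(t-s)}(f(s)-f(q))\,ds,$$
which at $t=q+h$ is exactly the integral appearing in (\ref{fs}). First I would note that the strict monotonicity of $f$ on $[0,\beta]$ (together with $q\in[0,\beta)$) forces $\tilde u(t)<p$ just after $q$; combined with $\tilde u(q+h)\ge p$ coming from (\ref{fs}), continuity produces a smallest $r\in(q,q+h]$ with $\tilde u(r)=p$ and $\tilde u<p$ on $(q,r)$. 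Since $u\le p$ on $[q,r]$, the maximum in (\ref{meq}) equals $p$ there via the constant initial condition, so $u(\cdot,p)=\tilde u$ on $[q,r]$, yielding $u(r,p)=p$ and $u(t,p)<p$ on $(q,r)$.

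The main technical obstacle is to upgrade the crossing at $r$ to a transversal one, i.e.\ to prove $u'(r,p)=f(r)-f(q)>0$ strictly. The weak inequality $\ge 0$ is automatic from $\tilde u<p$ on $(q,r)$ and $\tilde u(r)=p$. To rule out the tangential case $f(r)=f(q)$, I would use the structure of $f$: since $r\in(q,q+T)$ (because $h<T$) and $f$ is strictly decreasing on $[0,\beta]$ and strictly increasing on $[\beta,T]$, the only point $r$ in this range with $f(r)=f(q)$ is the symmetric point $r_*\in(\beta,T)$; but at $r_*$ one has $\tilde u''(r_*)=f'(r_*)>0$, making $r_*$ a strict local minimum of $\tilde u$ with value $p$, which contradicts $\tilde u<p$ on a left neighbourhood of~$r_*$. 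This step is the delicate part because it depends on the sign of $f'$ precisely at the single place where tangency could occur.

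Once $u'(r,p)>0$ is in hand, $u(t,p)>p$ just after $r$, and since $u\equiv p$ on $[q-h,q]$ and $u\le p$ on $[q,r]$ the functional maximum on $[t-h,t]$ coincides with $u(t,p)$ itself, so $u$ satisfies (\ref{ode}) on $[r,\nu^*]$. Because $u(r,p)=\tilde f(q)<\tilde f(r)$, the trajectory starts strictly below the isocline $u=\tilde f(t)$. I would then track $g(t):=\tilde f(t)-u(t,p)$, which satisfies $g'+|a+b|g=\tilde f'(t)$: the forcing is nonnegative on $[r,T]$, keeping $g>0$ there, and then becomes negative on $(T,T+\beta)$ while $u$ continues to increase, driving $g$ to zero at some first $\nu^*\in(T,T+\beta)$. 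By Lemma~\ref{LL9}, $\nu^*$ is a local maximum of $u$, giving $\nu^*(q)\in(T,T+\beta)$ and ${\cal R}(p)=\tilde f(\nu^*)\ge p$ (strict because $u$ is increasing on $(r,\nu^*)$).

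For continuity of ${\cal R}$ at $p$, the transversality at $r$ combined with continuous dependence of solutions of (\ref{meq}) on initial data yields, for every $p'=\tilde f(q')$ close to $p$, a nearby transversal crossing $r'\in(q',q'+h]$; afterwards $u(\cdot,p')$ follows (\ref{ode}) to a local maximum at some $\nu^*(q')$ close to $\nu^*(q)$ (the implicit function theorem applies since $u''(\nu^*,p)<0$), and hence ${\cal R}(p')=\tilde f(\nu^*(q'))\to{\cal R}(p)$ as $p'\to p$.
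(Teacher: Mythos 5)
Your overall strategy coincides with the paper's: variation of constants on $(q,q+h]$ to show that (\ref{fs}) forces a first return $r$ of $u(\cdot,p)$ to the level $p$, exclusion of a tangential crossing, then the ordinary differential equation (\ref{ode}) phase carrying the solution up to its first qualified maximum in $(T,T+\beta)$. However, two of your steps rely on differentiating $f$, which is not available here: in this subsection only the hypotheses of Lemma~\ref{Lem7} are in force, so $f$ is merely a continuous sine-like function (condition ({\bf T}), which gives $C^1$-smoothness, is introduced only later). Concretely, your exclusion of the tangency $f(r)=f(q)$ via $\tilde u''(r_*)=f'(r_*)>0$ is not justified — and even for $C^1$ sine-like $f$ the derivative may vanish at the single point $r_*$, since strict monotonicity does not give $f'>0$ pointwise. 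The paper avoids this by the sign argument of Lemma~\ref{Lr2}: writing $(u(t)-u(r))e^{-at}=\int_r^t e^{-as}(f(s)-f(r))\,ds$ backward from $r$ shows that $u(t,p)>p$ on a left neighbourhood of $r_*\in(\beta,T)$, contradicting the minimality of $r$; this uses only continuity and strict monotonicity of $f$.

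The same issue affects your continuity argument: the implicit function theorem applied to $u''(\nu^*,p)<0$ is unavailable, and, more importantly, continuous dependence only produces \emph{a} qualified maximum of $u(\cdot,p')$ near $\nu^*(q)$; it does not by itself rule out that the \emph{first} qualified maximum $\nu^*(q')$ jumps to an earlier window $(jT,jT+\beta)$ — which is exactly the discontinuity mechanism of ${\cal R}$ identified in Theorem~\ref{T15} (the graph passing through a point $(\beta+jT,0)$). The paper closes this by observing that $u(t,p)>p\geq 0$ on $(r,\nu^*(q))$ and $h<T$, so the graph avoids the set $\{(\beta+jT,0):j\geq1\}$, and then invoking Theorem~\ref{T15} (with Lemma~\ref{LL11} supplying condition ({\bf M}) at $p$ since ${\cal R}(p)\geq p$). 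Your $g(t)=\tilde f(t)-u(t,p)$ bookkeeping for locating $\nu^*$ in $(T,T+\beta)$ is fine in substance (it is the isocline argument of Lemma~\ref{LL9} and Corollary~\ref{C12a}), though you should note that it presumes $r\geq\beta$ and $p>0$, both of which need a word of justification.
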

\begin{proof} Since ${\cal R}(p)>0$ for all $p \in K$, it suffices to take $p>0$.  Consider the solution $u(t,p)$ on the
interval $I_q=(q,q + h]$. If $u(t,p) \leq p$ for all $t \in I_q$,
and (\ref{fs}) holds,	
we obtain 
\begin{align*}u(q+h,p) &= e^{a h}p    + \int_q^{q + h}
e^{a (h+ q - s)}[b p + f(s)]ds =\\
&=e^{a h}p   + p\frac{b}{a}(e^{ah}-1)+ \int_0^h
e^{as}f(q+h-s)ds \geq p.
\end{align*}

This shows that $u(r,p)=p$ at some leftmost point $r\in (q,q+h]$ where $u'(r,p)\geq 0$. 
 If $u'(r,p)=0$, then 
$p= u(r,p)= \tilde f(r)$, so that $r \in (\beta, T)$ and therefore, repeating the computation in the proof of Lemma \ref{Lr2}, we find that  
$u(t,p) >p$ for  $t \in (\beta, r)$, a contradiction.  
 Thus $u'(r,p)>0$ so that, by the definition of $ \nu^*(q)$, the solution $u(t,p)$ is increasing on the interval $(r, \nu^*(q))$.

Moreover, 
since $h < T$ and $u(t,p) > p \geq 0$ on $(r, \nu^*(q))$, the graph of the solution
does not intersect the set $\{(\beta + jT,0): j \geq 1\}$.  Hence,
by Theorem \ref{T15}, ${\cal R}$ is continuous at $p$ if (\ref{fs}) holds.\qed 
\end{proof} 

\begin{corollary}  \label{cor18}  Suppose that $h \in (\beta, T)$,  $b <0$. Then   ${\cal R}$ is continuous at the point $p$,  if 
\begin{equation}
{\cal R}(p) \not= {\cal R}(0)\quad\mbox{and}\quad
 \int_0^h e^{a s} (f(\nu^*(q) - s)-f(\nu^*(q)))ds <0.
\label{ss}
\end{equation}
\end{corollary}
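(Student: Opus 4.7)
The plan is to invoke the contrapositive of Theorem~\ref{T15}. Since $h>\beta$, Theorem~\ref{T15} applies, and its conclusion identifies every point of discontinuity $p_0$ of ${\cal R}$ (under assumption~({\bf M}) at $p_0$) with ${\cal R}(p_0)={\cal R}(0)$. Thus ${\cal R}(p)\neq{\cal R}(0)$ yields continuity at $p$ as soon as~({\bf M}) is known to hold at $p$. By Lemma~\ref{LL11},~({\bf M}) is automatic when ${\cal R}(p)\geq p$, so it suffices to verify~({\bf M}) in the case ${\cal R}(p)<p$, which is where the sign condition $b<0$ and the integral inequality~(\ref{ss}) come into play.

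To verify~({\bf M}) in that case, I would introduce the auxiliary linear function $v$ solving the IVP
\[
v'(t)=a\,v(t)+b\,u(\nu^*(q),p)+f(t),\qquad v(\nu^*(q))=u(\nu^*(q),p),
\]
which models $u(\cdot,p)$ under the (fictitious) hypothesis that the lagged maximum remains frozen at $u(\nu^*(q),p)$. A variation-of-constants calculation, using the identity $(a+b)\,u(\nu^*(q),p)=-f(\nu^*(q))$, delivers the closed form
\[
v(\nu^*(q)-h)-u(\nu^*(q),p)=-e^{-ah}\int_0^h e^{a\sigma}\bigl[f(\nu^*(q)-\sigma)-f(\nu^*(q))\bigr]\,d\sigma,
\]
which is strictly positive by~(\ref{ss}); hence $v(\nu^*(q)-h)>u(\nu^*(q),p)$.

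Next I would argue by contradiction. Suppose~({\bf M}) fails, so $u(s_0,p)=u(\nu^*(q),p)$ for some $s_0\in[\nu^*(q)-h,\nu^*(q))$. Setting $\phi:=u(\cdot,p)-v$ and $M(t):=\max_{r\in[t-h,t]}u(r,p)$, one has $\phi(\nu^*(q))=0$ and
\[
\phi'(t)=a\,\phi(t)+b\bigl(M(t)-u(\nu^*(q),p)\bigr)
\]
on $[\nu^*(q)-h,\nu^*(q)]$. The existence of $s_0$ forces $M(t)\geq u(\nu^*(q),p)$ on $[s_0,\nu^*(q)]$ (since $s_0\in[t-h,t]$ for such $t$); and the regime ${\cal R}(p)<p$ forces $\nu^*(q)>q+h$ (otherwise $u(q,p)=p$ would lie in the lag window at $\nu^*(q)$, contradicting ${\cal R}(p)<p$), so the initial value $p>u(\nu^*(q),p)$ can be used, via a short bookkeeping argument on the ordering of the intermediate maxima of $u(\cdot,p)$, to conclude that $M(t)\geq u(\nu^*(q),p)$ also on $[\nu^*(q)-h,s_0)$. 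Since $b<0$, the forcing term $b(M(t)-u(\nu^*(q),p))$ is then non-positive throughout $[\nu^*(q)-h,\nu^*(q)]$, and the variation-of-constants formula applied backward from $\nu^*(q)$ gives $\phi(\nu^*(q)-h)\geq 0$. Combined with the strict inequality from~(\ref{ss}), this yields
\[
u(\nu^*(q)-h,p)\geq v(\nu^*(q)-h)>u(\nu^*(q),p),
\]
contradicting the defining property of $\nu^*(q)$ as the point attaining the maximum of $u(\cdot,p)$ on $[\nu^*(q)-h,\nu^*(q)]$. Hence~({\bf M}) holds at $p$, and Theorem~\ref{T15} delivers continuity at $p$.

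The main obstacle is the intermediate bookkeeping step extending $M(t)\geq u(\nu^*(q),p)$ from $[s_0,\nu^*(q)]$ to the whole lag window $[\nu^*(q)-h,\nu^*(q)]$; this relies on locating where in the past the trajectory has exceeded $u(\nu^*(q),p)$, with the initial value $p$ and the structural bound $\nu^*(q)>q+h$ playing the key role. The sign $b<0$ is essential throughout to preserve the direction of the backward differential inequality.
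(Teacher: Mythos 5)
Your argument is essentially the paper's own proof: the paper also verifies condition ({\bf M}) at $p$ by assuming it fails and running a backward variation-of-constants estimate over the full window $[\nu^*-h,\nu^*]$, using $b<0$, the bound $\max_{w\in[s-h,s]}u(w,p)\geq{\cal R}(p)$, the identity $(a+b){\cal R}(p)=-f(\nu^*)$ and (\ref{ss}) to reach the contradiction ${\cal R}(p)<{\cal R}(p)$, and then invokes Theorem~\ref{T15}; your auxiliary function $v$ and the difference $\phi$ merely unpack that same chain of inequalities. The one step you flag as an obstacle --- extending $M(t)\geq{\cal R}(p)$ from $[s_0,\nu^*]$ back to all of $[\nu^*-h,\nu^*]$ --- is asserted without comment in the paper, so you are if anything more careful; it closes by taking $s_0$ leftmost and observing that if some $t_1<s_0$ had $\max_{[t_1-h,t_1]}u<{\cal R}(p)$, then $u<{\cal R}(p)$ on $[s_0-h,s_0)$ and $s_0>q$ (via $\nu^*>q+h$) would itself satisfy the defining property of $\nu^*$, contradicting its minimality.
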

\begin{proof}
We claim that  $u(s,p) < u(\nu^*,p)$ for all $s \in [\nu^*-h,\nu^*)$. 
Indeed, otherwise $\max\{u(s,p), s \in [t-h,t]\} \geq {\cal R}(p)$ for all $t  \in [\nu^*-h,\nu^*]$ so that 
\begin{align*}{\cal R}(p) &= e^{ah} u(\nu^* -h,p)  + \int_{\nu^* -h}^{\nu^*}
e^{a (\nu^* - s)}[b\max_{w \in [s-h,s]}u(w,p) + f(s)]ds \leq\\
&\leq e^{ah} {\cal R}(p)  + \int_{\nu^* -h}^{\nu^*}
e^{a (\nu^* - s)}[b{\cal R}(p) + f(s)]ds =\\
&=  e^{ah} {\cal R}(p)  + \int_{0}^{h}
e^{a s}[b{\cal R}(p) + f(\nu^* - s)]ds < {\cal R}(p),
\end{align*}
a contradiction.  
Thus condition ({\bf M}) is satisfied and Theorem \ref{T15} implies the continuity of ${\cal R}$ at $p$ once ${\cal R}(p) \not= {\cal R}(0)$.
\qed 
\end{proof}

We illustrate  the application of Corollaries~\ref{cor17} and  \ref{cor18} with an example that we will consider in Section~\ref{sec:chaos}.

\begin{example} 
\label{example32}
{\rm Consider 
 the equation
\begin{equation}
\label{ex2}
u'(t)= 0.32 u(t) - \max_{t-3\pi/2\leq s\leq t}u(s)+1 - \sin t.
\end{equation}
We can easily check that the second condition in  (\ref{ga}) holds. Since $\beta =\pi < h =1.5\pi < T =2\pi$, we can apply Corollaries~\ref{cor17} and \ref{cor18} to conclude that ${\cal R}$ is continuous on the interval $[0,0.9] \subset K=[0,2(a+b)^{-1}]\approx[0,2.94118]$, where ${\cal R}(p)> p$  (by Corollary~\ref{cor17})
and ${\cal R}$ is continuous at each point of the set ${\cal R}^{-1}([1.43\dots, 2.94\dots]\setminus{{\cal R}(0)})$
(by Corollary \ref{cor18}).  The graph of the return map for equation~(\ref{ex2}) is numerically plotted in Figure~\ref{Fig2}. The next result shows that the graph in this figure is a continuous curve at least till its first intersection with the diagonal. Theorem \ref{T24} in Section~\ref{sec:examples}  describes in more detail  the main continuity properties of the return map for (\ref{ex2}). 
}
\end{example}
\begin{corollary} \label{Cor20}
Assume that either of   the stability conditions in 
(\ref{ga}) holds and suppose that $h \in (\beta,T)$.
Then there are $\delta >0$ and  $p^*$ such that ${\cal R} (p^*) = p^*$
and ${\cal R}(p)> p$ for all $p \in [0,p^*)$. Furthermore,  ${\cal R}$ is continuous on $[0, p^* + \delta)$. If, in addition, ({\bf M}) is satisfied and  $[0,c) \subset K$ is the  maximal half-open interval where  ${\cal R}$ is continuous then either $[0,c] = K$ and $\inf{\cal R}(K) >0$ or ${\cal R}(c-)=0, \ {\cal R}(c)= {\cal R}(0)>0$. 
\end{corollary}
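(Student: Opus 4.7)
The plan is to use Corollary~\ref{cor17} to secure an initial interval where ${\cal R}$ is continuous and above the diagonal, locate the leftmost fixed point $p^*$ inside it, and then extend continuity past $p^*$ by combining Lemma~\ref{LL11} with Theorem~\ref{T15}. First, the integral $I(q):=\int_0^h e^{as}(f(q+h-s)-f(q))\,ds$ is continuous in $q\in[0,\beta]$; at $q=\beta$ (where $f(\beta)=0$ is the strict minimum of $f$) one has $I(\beta)=\int_0^h e^{as}f(\beta+h-s)\,ds>0$, since by ({\bf H}) the function $f\geq 0$ vanishes only on the discrete set $\{\beta+kT\}$. The variation-of-constants calculation at the heart of Corollary~\ref{cor17} actually delivers the clean identity
\[
u(q+h,p)=p+I(q)\quad\text{whenever}\ u(t,p)\leq p\ \text{on}\ (q,q+h],
\]
so in particular $u(\beta+h,0)=I(\beta)>0$ and hence ${\cal R}(0)>0$. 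Continuity of $I$ produces a maximal interval $[q_0,\beta]\subset[0,\beta]$ on which $I\geq 0$, corresponding via $p=\tilde f(q)$ to a $p$-interval $[0,p_1]$ with $p_1=\tilde f(q_0)$; on $[0,p_1]$ Corollary~\ref{cor17} gives that ${\cal R}$ is continuous and that ${\cal R}(p)\geq p$.

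To produce $p^*$, I would observe that Lemma~\ref{LL11} makes condition ({\bf M}) automatic wherever ${\cal R}(p)\geq p$, so ({\bf M}) holds throughout $[0,p_1]$ and, by continuous dependence of the solution of (\ref{meq}) on its constant initial datum, on a right-hand neighborhood. Theorem~\ref{T15} then applies on that extended region, and any discontinuity point $p_0$ there would satisfy ${\cal R}(p_0)={\cal R}(0)$ together with $\liminf_{p\to p_0}{\cal R}(p)=0$; but as long as the extension still satisfies ${\cal R}(p)>p$, the liminf is bounded below by $p_0>0$, a contradiction. Thus continuity and the inequality ${\cal R}(p)\geq p$ propagate until, inevitably (by Corollary~\ref{C12a} together with ${\cal R}(K)\subset K$), the graph of ${\cal R}$ first meets the diagonal at some $p^*\in(0,\max K)$, where ${\cal R}(p^*)=p^*$ and ${\cal R}(p)>p$ on $[0,p^*)$. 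Running the same ({\bf M})-automatic-plus-Theorem~\ref{T15} argument on a right-neighborhood of $p^*$, now using that ${\cal R}(p^*)=p^*>0$ keeps the liminf bounded away from zero, yields $\delta>0$ with ${\cal R}$ continuous on $[0,p^*+\delta)$.

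For the second statement, assume ({\bf M}) globally and let $[0,c)$ be the maximal half-open subinterval of $K$ on which ${\cal R}$ is continuous. By Lemma~\ref{Lr}, $\nu^*\!\!\pmod T\in[0,\beta)$ for every $p\in K$, hence ${\cal R}(p)=\tilde f(\nu^*\!\!\pmod T)>0$. If ${\cal R}$ is continuous also at $c=\max K$, so that $[0,c]=K$, then strict positivity of ${\cal R}$ on the compact set $K$ implies $\inf{\cal R}(K)>0$. Otherwise $c$ is itself a discontinuity point of ${\cal R}$: Theorem~\ref{T15} immediately gives ${\cal R}(c)={\cal R}(0)>0$, and the continuous variation of $q\mapsto\nu^*(q)$ on $[0,c)$ combined with the mechanism isolated in the proof of Theorem~\ref{T15} (the break can occur only via $\nu^*(q)\to\beta+j_0T$ for some $j_0$ as $p\to c^-$) forces $\lim_{p\to c^-}{\cal R}(p)=\tilde f(\beta)=0$, i.e.\ ${\cal R}(c{-})=0$.

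The main obstacle, in my view, lies in the bootstrap extension of continuity past the explicit Corollary~\ref{cor17} region: the automatic validity of ({\bf M}) on the closed set $\{{\cal R}(p)\geq p\}$ must be transported to a genuine open one-sided neighborhood by continuous dependence of the entire trajectory $u(\cdot,p)$ on $[q,\nu^*(q)]$, and the Theorem~\ref{T15} argument has to rule out abrupt jumps in the integer index labelling the period containing $\nu^*(q)$ as $p$ moves.
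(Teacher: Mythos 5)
Your overall strategy --- continuing the continuity of ${\cal R}$ and the inequality ${\cal R}(p)>p$ rightward from $p=0$ until the graph first meets the diagonal, then relaxing the diagonal condition under ({\bf M}) --- is the same as the paper's, and your handling of the final dichotomy is in the right spirit. The genuine gap is in the propagation mechanism. You exclude discontinuities by invoking Theorem~\ref{T15}, but that theorem presupposes condition ({\bf M}), and at the frontier point $e^*$ of the interval on which continuity and ${\cal R}(p)>p$ have already been secured, ({\bf M}) is available through Lemma~\ref{LL11} only if ${\cal R}(e^*)\geq e^*$ --- an inequality you cannot assert without the very (left-)continuity at $e^*$ that you are trying to prove; the argument is circular exactly where it has to bite. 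Moreover, even where Theorem~\ref{T15} does apply, its conclusion $\liminf_{p\to p_0}{\cal R}(p)=0$ is two-sided: knowing ${\cal R}(p)>p$ only to the left of a candidate discontinuity $p_0$ does not rule out a $\liminf$ realized from the right, so the contradiction you draw is not forced. Transporting ({\bf M}) to a one-sided neighbourhood ``by continuous dependence,'' which you flag yourself as the main obstacle, is precisely the step that cannot be waved through: the inequality in ({\bf M}) degenerates as $s\to\nu^*$, and $\nu^*(q)$ itself may a priori jump.

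The paper closes this hole by arguing directly on the solution curves rather than through Theorem~\ref{T15}: by Lemma~\ref{LL9} and Corollary~\ref{C12a}, $u(t,0)$ increases until it meets the decreasing branch $\Gamma$ of $\tilde f$ at some $z\in(T,\beta+T)$, where it has a \emph{strict} maximum dominating $u(\cdot,0)$ on $[z-h,z+\epsilon]\setminus\{z\}$; this strict domination persists under perturbation of $p$ whenever ${\cal R}(p)\geq p$, and Lemma~\ref{Lr2} forbids $\nu^*(q)$ from jumping backward, so the point $(\nu^*(q),{\cal R}(p))$ slides continuously along $\Gamma$. This makes the good set of $p$ simultaneously relatively open and closed, carries the continuation past the first fixed point $p^*$, and, once ({\bf M}) is assumed, continues until the closure of the graph first meets the axis at $c$ with ${\cal R}(c-)=0$, ${\cal R}(c)={\cal R}(0)$. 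Your base step via Corollary~\ref{cor17} is a legitimate (if unnecessary) alternative entry point, but note a small slip there: the identity $u(q+h,p)=p+I(q)$ is valid only under the hypothesis $u(t,p)\leq p$ on $(q,q+h]$, which fails at $p=0$ since $u(t,0)$ increases immediately to the right of $\beta$; the fact ${\cal R}(0)>0$ should instead be read off Lemma~\ref{Lr}, which gives ${\cal R}(p)=\tilde f(\nu^*\!\!\pmod{T})>0$ for every $p\in K$.
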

\begin{proof}
In view of Lemma \ref{LL9} and Corollary \ref{C12a},  the graph of $u(t,0)$ increases until
its first intersection at some point $(z, {\cal R}(0)), \ z \in (T, \beta + T)$
with the decreasing part of the graph $\Gamma$ of
$\tilde f:(T,\beta +T) \to (0, +\infty)$.  Since $u(t,0)$
has a strict maximum at $z$, it follows that
$$
u(z,0) > u(s,0), \ \mbox{for all} \ s \in [z-h,z+\epsilon]\setminus\{z\}, 
$$
for all small $\epsilon >0$, 
and hence we conclude  that for $p>0$ close to
$0$ the solutions $u(t,p)$ have also strict maxima at some points 
close to $z$.  Therefore the continuous dependence of $u(t,p)$
on the variables $(t,p)$ implies that the point $(\nu^*(q),{\cal R}(p))$ changes
continuously belonging to $\Gamma$ while ${\cal R}(p) \geq p$. By the continuity
of $u(t,p)$, our argument still works when $p$ belongs to some right-hand 
neighbourhood of the  least fixed point $p^*$. 

Finally, if ({\bf M}) holds,  then condition ${\cal R}(p)\geq p$  
can be omitted in the above argumentation and ${\cal R}$ has a continuous graph 
until  the first intersection of its closure with the real axis at some point $c$, where
${\cal R}(c -) = 0, \ {\cal R}(c) = {\cal R}(0) >0$. \qed 
\end{proof}
Corollary \ref{Cor20} provides an alternative proof of the existence of at least one 
$T-$periodic solution for equation (\ref{meq}) with sine-like $T$-periodic continuous function $f(t)$. 
In \cite{17} this result was obtained by using the topological degree method.

\subsection{Differentiability of the return map}
 Hereafter, we again assume that all the conditions of Lemma \ref{Lem7} hold and  $\beta<h$.
It is not difficult to prove the differentiability (possibly, one-side differentiability) of the return map ${\cal R}$ in the case when the graph of 
$u=u(t,p)$ on the interval $(q, \nu^*(q))$  is  
$U$-shaped  in the following sense:
\begin{definition}
We will say that the solution
$u(t,p)$ is $U$-shaped if on
the interval $\Omega_q = (q, \nu^* (q))$ it has only
one critical point, in which it reaches
its minimal value, and if in some left-side
neighborhood of $\nu^*$, $u(t,p)$ satisfies
the ordinary differential equation (\ref{ode}). Set $U(t,p)= \max\{u(s,p), s \in [t-h,t]\}.$
If $u(t,p)$ is $U$-shaped, then the
interval $\Omega_q$ can be represented
as the disjoint union of the subintervals $I_1 = (q, \lambda (q)]$,
$I_2= (\lambda (q), \mu (q)]$ and  $I_3 = (\mu (q), \nu^* (q))$,
where
either $\lambda (q)=q + h,$ or $\lambda(q)= \mu(q)$ and $I_2=\emptyset$, such that $U(t,p) = p$ on $I_1$,
$U(t,p) = u(t-h,p)$ on $I_2$, and
$U(t,p) = u(t,p)$ on $I_3$.
\end{definition}
\vspace{-2mm}
\begin{figure}[htb]
\centering \includegraphics[width=6cm]{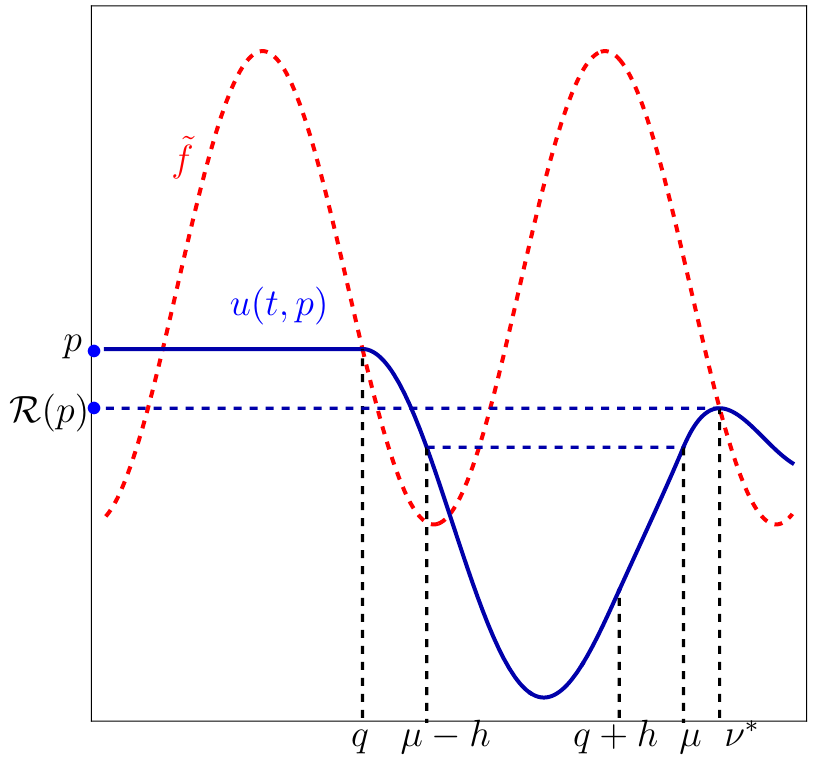}
\caption{Schematic representation of a $U$-shaped solution and its characteristic points.}
\label{FigSc}
\end{figure}

Due to Theorem \ref{T15},
${\cal R}$ is continuous at $p$ if
$u(t,p)$ is $U$-shaped and its graph does not intersect 
the set $\{(\beta + kT,0), k \geq 1 \} \subset \R^2$.

Assuming  $u(t,p)$ is $U$-shaped, we  introduce the following variational  equation along 
$u(t,p)$:

\begin{equation}
w'(t) = 
\begin{cases}  
a w(t) + bw(q), & \text{if $q \leq t \leq  \lambda (q)$,}\\
a w(t) + bw(t-h),& \text{if $ \lambda (q) \leq t < \mu (q)$,}\\
(a +b) w(t), & \text{if $\mu (q) \leq t < \nu^* (q),$}
\end{cases}
\label{ch}
\end{equation}
where $\lambda(q)\in\{q+h,\mu(q)\}$.

Let $v$ denote the fundamental solution of the linear delay-differential equation (\ref{lde}).
Then, if $w(t)$ satisfies the  variational equation (\ref{ch}), we  obtain (see  \cite[Chapter 1, Theorem 6.1]{HD})
hat 
$w(\nu^* (q))= \Delta(q)w(q),$ where
$$\Delta(q)= \left(v(\mu-q)+b\int_{\mu-q-h}^{\mu-q}v(s)ds\right)e^{(a+b)(\nu^*-\mu)}.
$$

To simplify and shorten our proofs, hereafter we assume the following additional assumption that  is fulfilled in the example considered in Subsection~\ref{sec:chaos}:\\

\noindent ({\bf T}) $f$ is  a $C^1$-smooth $T$-periodic function having exactly two critical points on each half-open interval of  length $T$. Moreover, 
$a>0$, $b<0$ and  $h \in (\beta, T)$. \\

Using ({\bf T}), we  can easily establish that $u(t,p)$ has at most one critical point on
 the time interval $(q,T]\cap (q,q+h)$. If $p=0$, this fact follows from Lemma \ref{LL9}.  Next,  Lemma \ref{Lr2} shows that $u(t,p)$ with $p >0$ decreases on some maximal  non-empty  interval $I\supset (q,  \min\{q+h, \beta\})$. In fact, if 
$\hat q> \beta$ is the leftmost point satisfying $\tilde f(\hat q)=p$, then  
$$
u'(t,p) = au(t,p)+ bp+f(t) < (a+b)p +f(q)=0,
$$
for all $t \in (q,  \min\{q+h, \hat q\}]$.

If  $u(t,p)$ has a leftmost critical point $t_m\in (\hat q, q+h)$ then $0\leq u''(t_m)=f'(t_m)$ implying that $\beta < t_m \leq T$ and $0< u''(t_m)$ if $t_m<T$. In particular,  $u(t,p)$  can have at most one critical point 
on $(q,T)$. Now,  suppose that  $u'(T,p)=0$ and  $t_m<T < q+h$. Then $u(t,p) < p$ for all $t \in (q,T]$ so that $v(t)=u'(t,p)$ satisfies $v'(t)=av(t)+f'(t), \ v(T)=0,$ in some small neighbourhood of $T$. Since $f'(t)$ 
is changing its sign at $T$ from positive to negative, $v(t)$ is negative in some small punctured vicinity of $T$. Thus $u(t,p)$ should have an additional local maximum point between $t_m$ and $T$, a contradiction. In this way, $u(t,p)$  can have at most one critical point on $(q,T]\cap (q,q+h)$.

The above reasoning  is useful in proving the following result: 
\begin{lemma}  \label{LL22}Assume that {\rm({\bf T})} and  all the conditions of Lemma \ref{Lem7} hold.  If ({\ref{fs}}) is true then  the graph of
$u=u(t,p)$ is $U$-shaped. 
\end{lemma}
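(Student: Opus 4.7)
The plan is to verify the two defining properties of a $U$-shaped solution on $\Omega_q = (q, \nu^*(q))$: (a) $u(t,p)$ has exactly one critical point, a strict local minimum, and (b) $u(t,p)$ satisfies (\ref{ode}) on a left-hand neighborhood of $\nu^*(q)$. I will first set up the geometry using already-proved results, and then reduce (a) to a critical-point counting argument on the initial sub-interval $(q, r]$.

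Since (\ref{fs}) is assumed, Corollary~\ref{cor17} supplies a point $r \in (q, q+h]$ with $u(r, p) = p$, $u'(r, p) > 0$, $u(t, p) < p$ on $(q, r)$, $\nu^*(q) \in (T, T + \beta)$, and ${\cal R}(p) \geq p$. The last inequality allows me to apply Lemma~\ref{LL11} to verify condition ({\bf M}) at this $p$, after which Lemma~\ref{LL12} produces a maximal open interval $(\mu(q), \nu^*(q))$ on which $u(t,p)$ is strictly leading, satisfies (\ref{ode}), and has $u' > 0$ (the possible isolated zero at $u = 0$ is irrelevant here since $u > p \geq 0$ on this interval). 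This immediately yields (b). I next identify $\mu(q) = r$: for $t$ slightly above $r$, monotone increase gives $u(t) > p \geq u(s)$ for all $s \in [t-h, r]$, so $\mu(q) \leq r$; while at $t = r$ the equality $u(r) = p = u(s)$ for $s \in [r - h, q]$ precludes strict leadership, giving $\mu(q) = r$. Hence the $U$-shape decomposition realises the alternative $\lambda(q) = \mu(q) = r$ with $I_2 = \emptyset$.

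The crux is to show that $u$ has exactly one critical point on $(q, r]$ and that it is a strict minimum. Since for $t \in (q, r] \subset (q, q+h]$ the window $[t-h, t]$ meets $[q - h, q]$ where $u = p$, and $u \leq p$ on $(q, r]$, the maximum in the window equals $p$, so $u$ solves $u'(t) = au(t) + bp + f(t)$ on $(q, r]$. Rolle's theorem applied to $u(q) = u(r) = p$ with $u < p$ strictly on $(q, r)$ produces at least one critical point. At any critical point $t_c$ one has $u''(t_c) = f'(t_c)$, so by the sine-like shape of $f$ from ({\bf T}) a minimum forces $t_c \in (\beta, T)$ and a maximum forces $t_c \in (T, T + \beta)$ (both strictly). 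The analysis immediately preceding Lemma~\ref{LL22} then identifies the leftmost critical point $t_m$ as a strict minimum in $(\hat q, T)$ and rules out $t_c = T$ via the $v(t) = u'(t)$ argument recalled there. A second minimum in $(t_m, T)$ is impossible: it would force, through the intermediate value theorem applied to $u'$, an intervening maximum in $(\beta, T)$, impossible because a maximum requires $f' \leq 0$ but $f' > 0$ on $(\beta, T)$.

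The main obstacle is the residual case $r > T$, which can arise under ({\bf T}) when $h > T - \beta$. Here a second critical point $t_2 \in (T, r] \subset (T, T + \beta)$ would be a strict local maximum with $u(t_2) < p$; then $u$ decreases strictly just after $t_2$ and cannot recover to $u(r) = p$ without turning back, forcing one more critical point (a minimum) in $(t_2, r] \subset (T, T + \beta)$ --- impossible because $f' < 0$ there is incompatible with the minimum condition. Therefore $t_m$ is the unique critical point on $(q, r]$, and combined with strict monotone increase on $(r, \nu^*(q))$ from Lemma~\ref{LL12}, $u(t, p)$ has exactly one critical point in $\Omega_q$, namely the minimum $t_m$. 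This, together with (b), establishes the $U$-shape.
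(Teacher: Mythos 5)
Your proof is correct and takes essentially the same route as the paper's: both rest on Corollary~\ref{cor17} (the point $r$ with $u'(r,p)>0$), the identity $u''(t_c)=f'(t_c)$ at critical points combined with the sign pattern of $f'$, and the discussion immediately preceding the lemma that settles the interval $(q,T]$. The only differences are cosmetic: the paper rules out a maximum in $(T,r)$ by concluding $u'(r,p)<0$ (contradicting Corollary~\ref{cor17}) where you instead produce an impossible minimum in $(T,T+\beta)$, and you make explicit, via Lemmas~\ref{LL11} and~\ref{LL12}, the verification that $u$ obeys (\ref{ode}) near $\nu^*$ and that $\mu(q)=r$, which the paper leaves implicit.
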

\begin{proof} With the notations of  Corollary~\ref{cor17} and the above comments, it suffices to establish that $t_m$ is  the unique critical point of $u(t,p)$ on the interval $(q,r)$. Indeed, 
if $u(t,p)$ has a different critical point $t_* \in (T,r)\subset (T, \beta+T)$, then $u''(t_*,p) = f'(t_*) <0$ and therefore $t_*$ is the unique critical point of $u(t,p)$
on $(T,r)$ where a local maximum is reached. Thus $u'(r,p) <0$,  which is impossible by   Corollary~\ref{cor17}.
\qed 
\end{proof}
By the same arguments, if $q+h >T$ then $u(t,p)$ can have at most one additional critical point on $(T,q+h]$ where a local maximum is reached. Clearly, this can happen only 
when $u(t,p)< p$ for $t \in (q,q+h]$. Furthermore, suppose that  there exists the leftmost point $r \in (q,q+h]$ such that $u(r,p)= p$. We claim that then the  inequality ({\ref{fs}}) is necessarily satisfied. Indeed, otherwise the solution $u_p(t)$ of the initial value problem $u'(t)=au(t)+bp+f(t),$ $u_p(q)=p$ satisfies $u_p(q+h)<p$
(observe that the inequality $u_p(q+h)\geq p$ amounts to ({\ref{fs}})). Thus $u_p(t)$ reaches its absolute maximum on $[r,q+h]$ at some point $t^* \in (T,q+h]$. 
Since $h< T$ this implies that $f(t^*)>f(q)$ and, consequently, $0=u'_p(t^*)=au(t^*)+bp+f(t^*)>ap+bp+f(t^*) = f(t^*)-f(q) >0$, a contradiction. 

Hence, under the assumptions of Lemma \ref{LL22}, $\mu(q) \leq q+h$ if and only if the  inequality ({\ref{fs}}) holds. For simplicity, it is convenient to consider the following assumption: 

\vspace{2mm}

\noindent ({\bf C}) The set of all $q\in [0,\beta)$ satisfying inequality (\ref{fs}) is a nonempty interval  $S= (\beta_1,\beta)$. 

\vspace{2mm}

For example, condition ({\bf C}) holds for equation (\ref{ex2})  introduced in  Example~\ref{example32} with  $\beta_1\approx 0.39289$ (see Example~\ref{example24}).

By the implicit function theorem, if ({\ref{fs}}) and ({\bf C})  hold then  the equation $u(t,p(q))=p(q)$,  where we denote $p=\tilde f(q)=p(q)$, has a unique solution $t=\lambda(q) \in (q,  q+h],$ smoothly depending on $q \in [\beta_1, \beta)$.  Also $\lambda(q)=\mu(q)$ if $q \in [\beta_1, \beta)$ and $\lambda(q) =q+h$, if $q \leq  \beta_1$.

Next, if  $q \in  (\beta_1,\beta)$, then 
\begin{equation}\label{pp}
p= e^{a( \lambda(q)-q)}p+ \int_q^{ \lambda(q)}e^{a( \lambda(q)-s)}(bp+f(s))ds,
\end{equation}
so that 
$$
1=  \left(p(a+b) +f(\lambda(q))\right)\partial_p\lambda(q)+\left(1+ \frac ba\right)e^{a( \lambda(q)-q)}-\frac b a, 
$$
where $\partial_p$ denotes the partial derivation with respect to $p$.  Next, for $q \in (\beta_1, \beta)$,  
$$
{\cal R}(p)= u(\nu^*(q),p) = pe^{(a+b)(\nu^*(q)-\lambda(q))} +\int^{\nu^*(q)}_{\lambda(q)}e^{(a+b)(\nu^*(q)-s)}f(s)ds,
$$
where $\nu^*$ is $C^1$-smooth function of $q$ as the solution of the equation 
$
F(\nu,q)=0, 
$
where $F(\nu, q)= au(\nu, p(q))+bp(q)+f(\nu), \ \partial_\mu F(\nu, p)=au'(\nu, p)+f'(\nu)=f'(\nu) <0$. 
A straightforward computation shows  that 
\begin{align*}
{\cal R}'(p)=  &u'(\nu^*(q),p)\partial_p\nu^*(q)+ e^{(a+b)(\nu^*(q)-\lambda(q))} -\\
\noalign{\medskip}
&p(a+b)e^{(a+b)(\nu^*(q)-\lambda(q))}\partial_p\lambda(q)  -
e^{(a+b)(\nu^*(q)-\lambda(q))}f(\lambda(q))\partial_p\lambda(q) =\\
\noalign{\medskip}
& e^{(a+b)(\nu^*(q)-\lambda(q))} -e^{(a+b)(\nu^*(q)-\lambda(q))}\left(p(a+b) +f(\lambda(q))\right)\partial_p\lambda(q)=\\
\noalign{\medskip}
&e^{(a+b)(\nu^*(q)-\lambda(q))} \left(\left(1+ \frac ba\right)e^{a( \lambda(q)-q)}-\frac b a \right)=\Delta (q(p)).
\end{align*}
As a  consequence, we have the following result: 
\begin{thm} \label{T23} Assume {\rm({\bf T})} and {\rm({\bf C})} hold, and let $q \in (\beta_1, \beta)$. Then $\mu(q)=\lambda(q)$  and 
\begin{equation}\label{fr}
{\cal R}'(p)= e^{(a+b)(\nu^*(q)-\mu(q))} \left(\left(1+ \frac ba\right)e^{a( \mu(q)-q)}-\frac b a \right)=\Delta (q(p)).
\end{equation}
Therefore, $b > a/(e^{-a h}-1)$ implies that ${\cal R}'(p)>0$ for all $p\in (0, \tilde f(\beta_1))\subset (0,p^*)$. 
On the other hand, if 
$b < a/(e^{-a h}-1)$ and  
the only root of  equation
\begin{equation}
f(\tau) + b \int_0^{\frac{1}{a}\ln\frac{b}{a+b}}e^{-a u}
f(u +\tau)du = 0 
\label{m1}
\end{equation}
is $\tau =q_0\in (\beta_1,\beta)$, then 
 ${\cal R}'(p)>0$  for $p \in (0, \tilde f(q_0))$ and 
 ${\cal R}'(p)<0$ for  $p \in (\tilde f(q_0), \tilde f(\beta_1))$. 
 \end{thm}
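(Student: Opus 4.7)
The identity $\mu(q)=\lambda(q)$ and the formula for $\mathcal{R}'(p)$ are already established in the calculation preceding the statement: assumption (C) combined with Lemma~\ref{LL22} gives $U$-shapedness of $u(\cdot,p)$ on $(q,\nu^*(q))$ with $\mu(q)=\lambda(q)\in (q,q+h]$, and the implicit function theorem applied to (\ref{pp}) and to $F(\nu,q)=0$ produces (\ref{fr}). The remaining work is thus the sign analysis of the factor $g(x):=(1+b/a)e^{ax}-b/a$ with $x=\mu(q)-q\in(0,h]$.

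Since (H) together with either alternative of (\ref{ga}) forces $a+b<0$, and (T) gives $a>0$, the coefficient $1+b/a=(a+b)/a$ is strictly negative; hence $g$ is strictly decreasing on $\mathbb{R}$ with $g(0)=1$ and unique zero $x_{\ast}=a^{-1}\ln(b/(a+b))>0$. A short computation shows $g(h)\ge 0$ iff $b\ge a/(e^{-ah}-1)$. Consequently, if $b>a/(e^{-ah}-1)$, then $g>0$ on $[0,h]$, and (\ref{fr}) yields $\mathcal{R}'(p)>0$ for every $q\in(\beta_1,\beta)$, i.e.\ every $p\in(0,\tilde f(\beta_1))$. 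If instead $b<a/(e^{-ah}-1)$, then $x_\ast\in(0,h)$ and $\mathrm{sign}\,g(\ell)=\mathrm{sign}\,(x_\ast-\ell)$ for $\ell\in[0,h]$, so everything reduces to locating where $\ell(q):=\mu(q)-q$ crosses $x_\ast$.

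The main algebraic step is to reduce the equation $\ell(q)=x_\ast$ to (\ref{m1}). Starting from (\ref{pp}), setting $\ell=\lambda(q)-q$ and using $p=\tilde f(q)=-f(q)/(a+b)$, one obtains after dividing by $e^{a\ell}$
\[
\frac{f(q)\bigl(1-e^{-a\ell}\bigr)}{a}=\int_0^\ell e^{-au}f(q+u)\,du.
\]
Substituting $\ell=x_\ast$ and using $e^{-ax_\ast}=(a+b)/b$, which gives $(1-e^{-ax_\ast})/a=-1/b$, the identity collapses precisely to
\[
f(q)+b\int_0^{x_\ast}e^{-au}f(q+u)\,du=0,
\]
i.e.\ equation (\ref{m1}) with $\tau=q$. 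Thus $\ell(q)=x_\ast$ iff $q$ is a root of (\ref{m1}), and by hypothesis the only such root in $(\beta_1,\beta)$ is $q_0$.

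To conclude, combine this with the two boundary values: at $q=\beta_1$ inequality (\ref{fs}) becomes equality so $\ell(\beta_1)=h>x_\ast$, while $\ell(\beta)=0<x_\ast$ by Corollary~\ref{C12}. Since $q\mapsto \ell(q)$ is continuous on $[\beta_1,\beta]$ (implicit function theorem applied to (\ref{pp})) and vanishes at $x_\ast$ only at $q_0$, one deduces $\ell(q)>x_\ast$ on $(\beta_1,q_0)$ and $\ell(q)<x_\ast$ on $(q_0,\beta)$, hence $g(\ell(q))<0$ and $>0$ respectively. Translating $q$-intervals into $p$-intervals through the decreasing bijection $p=\tilde f(q)$ on $[0,\beta]$ yields the claimed sign pattern. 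The main obstacle, essentially bookkeeping rather than conceptual, is the algebraic reduction above; once it is done, the monotonicity of $g$ and the two boundary values do all the remaining work.
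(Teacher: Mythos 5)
Your proposal is correct and follows essentially the same route as the paper: both reduce the sign of ${\cal R}'(p)$ to the sign of the factor $(1+b/a)e^{a(\lambda(q)-q)}-b/a$, identify the critical length $a^{-1}\ln(b/(a+b))$, use (\ref{pp}) to show that $\lambda(q)-q$ equals this length exactly at roots of (\ref{m1}), and conclude from the uniqueness hypothesis together with the boundary behaviour of $\lambda(q)-q$ at $\beta_1$ and $\beta$. Your write-up merely makes explicit the algebraic reduction of (\ref{pp}) to (\ref{m1}) that the paper states without computation.
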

\begin{proof} Since $\lambda(q) - q < h$  for all  $q \in (\beta_1,\beta)$, it follows that  $\Delta (q) > 0$ for all  $q \in (\beta_1,\beta)$ if $ h \leq  (1/a)\ln(b/(b+a)).$

  On the other hand, if $ h >   (1/a)\ln(b/(b+a))$, then by the intermediate value theorem, there is $q_0 \in (\beta_1,\beta)$
such that 
$$\lambda(q_0) - q_0 = \frac{1}{a}\ln\left(\frac{b}{b+a}\right)$$
and therefore  ${\cal R}'(p(q_0))=0$. Furthermore, it follows from (\ref{pp}) that $q_0$ satisfies (\ref{m1}),
which  proves the uniqueness of $q_0$ (under our assumptions). Hence, 
$$\lambda^*(q) - q <  \frac{1}{a}\ln\left(\frac{b}{b+a}\right),  \ \mbox{for} \ \,q \in (q_0,\beta); \quad \lambda(q) - q >  \frac{1}{a}\ln\left(\frac{b}{b+a}\right),   \ \mbox{for} \  q \in (\beta_1,q_0),$$ 
which finalises the proof. \qed
\end{proof}
\begin{example}
\label{example24} 
For the equation (\ref{ex2}) considered in  Example~\ref{example32}, we get $\beta =0.5\pi$,  $b<a/(e^{-ah}-1)$, and we numerically find $\beta_1\approx 0.39289$, 
$\tilde f(\beta_1)\approx0.90754$, $q_0\approx1.18459422$,  $\tilde f(q_0)\approx0.10831425.$ Thus, the return map ${\cal R}$ is $C^1$-smooth on the interval 
$[0,0.9]$, where it has a unique critical point $p_c=  \tilde f(q_0)$.  
Moreover, ${\cal R}$ reaches its absolute maximum at $p_c$. See Figure~\ref{Fig2}. 
\end{example}
It is quite remarkable  that the expression for ${\cal R}'(p)$ in (\ref{fr}) does not depend on the derivatives $\partial_q\nu^*(q)$ and $\partial_q\mu(q)$. As one can see in the proof of our next result,  it is due to the following three circumstances: a) that $u'(s,p)=0$ for all $s \in [q-h,q]$ (this eliminates the dependence on  $\partial_q\mu(q)$); b) that $u'(\nu^*(q),p)=0$ (this eliminates the dependence on  $\partial_q\nu^*(q)$); and c) that the graph of $u(t,p)$ is $U-$shaped.  

The next result can be viewed as a natural extension of Theorem \ref{T23} for $q \leq \beta_1$. 
\begin{thm} 
\label{T25} 
Suppose that assumptions {\rm({\bf T}) }and  {\rm({\bf C})} are satisfied,  
equation (\ref{m1}) has a unique  root $\tau \in (\beta_1,\beta)$, and there is $\alpha\in(0,\beta)$ such that the solutions $u(t,p(q))$ of equation (\ref{meq}) 
are  $U$-shaped for all $q \in (\alpha, \beta]$. If  $\Delta(q)<0$ for $q \in (\alpha, \beta_1]$,  then there is an increasing sequence (either finite or infinite) of real numbers $p_i$, with 
$0 < \tilde f(\tau)< p_1 < \dots <p_j < \dots < \tilde f(\alpha),$ 
such that  ${\cal R}$ is differentiable on the intervals 
$D_1= [0,p_1), \dots, D_j= [p_{j-1},p_j), \dots$,
strictly increasing on the interval
$(0, \tilde f(\beta_1))$,
and strictly decreasing on the interval $(\tilde f(\beta_1),p_1 )$
and on every $D_j, \ j > 1.$ Moreover, ${\cal R}(p)$ is right continuous at $p_i$ and ${\cal R}(p_j -) = 0, \ {\cal R}(0) = {\cal R}(p_j )$. 
Finally,
${\cal R}'$ is continuous on every $D_j$ and  ${\cal R}'(p) = \Delta (q(p))$,  ${\cal R}'(p_j-) < {\cal R}'(p_j+)$. 
\end{thm}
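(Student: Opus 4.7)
The plan is to split $[0, \tilde f(\alpha))$ into two regimes. On $(0, \tilde f(\beta_1))$, i.e.\ $q \in (\beta_1, \beta)$, Theorem \ref{T23} already provides the smoothness and monotonicity structure of ${\cal R}$. On $[\tilde f(\beta_1), \tilde f(\alpha))$, i.e.\ $q \in (\alpha, \beta_1]$, inequality (\ref{fs}) fails so $\lambda(q) = q + h < \mu(q)$ and the middle interval $I_2$ of the variational equation (\ref{ch}) is nonempty. By the assumed $U$-shape on $(\alpha, \beta]$, $\mu(q)$ and $\nu^*(q)$ depend $C^1$-smoothly on $q$ via the implicit function theorem, and the general formula
$$\Delta(q) = \left(v(\mu - q) + b \int_{\mu - q - h}^{\mu - q} v(s)\,ds\right) e^{(a+b)(\nu^* - \mu)}$$
derived from (\ref{ch}) through the fundamental solution of (\ref{lde}) extends the identity ${\cal R}'(p) = \Delta(q(p))$ to this region by the same variation-of-parameters argument as in Theorem \ref{T23} (the boundary conditions $u'(s,p)=0$ on $[q-h,q]$ and $u'(\nu^*,p)=0$ eliminate the dependence on $\partial_q\mu$ and $\partial_q\nu^*$). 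The hypothesis $\Delta(q) < 0$ on $(\alpha, \beta_1]$ then forces strict decrease of ${\cal R}$ wherever it is differentiable in the second regime.

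The sequence $\{p_j\}$ would be defined inductively: $p_1$ is the smallest $p > \tilde f(\beta_1)$ at which the graph of $u(\cdot, p(q))$ first touches the set $\{(\beta + kT, 0) : k \geq 1\}$ (if no such $p$ exists below $\tilde f(\alpha)$, the sequence is finite), and $p_{j+1}$ is the smallest $p > p_j$ at which an additional touching occurs. For $p$ just above $\tilde f(\beta_1)$ the $U$-shape persists with strict positivity on the interior of $(q, \nu^*)$ by continuous dependence on initial data, so $p_1 > \tilde f(\beta_1)$. On each open interval $(p_{j-1}, p_j)$, Theorem \ref{T15} gives continuity of ${\cal R}$, the $U$-shape and the derivative formula give ${\cal R} \in C^1$ with ${\cal R}'(p) = \Delta(q(p)) < 0$, and the monotone limit $\lim_{p \to p_j^-} {\cal R}(p)$ exists. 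Combined with $\liminf_{p \to p_j} {\cal R}(p) = 0$ from Theorem \ref{T15}, this forces ${\cal R}(p_j-) = 0$. At $p_j$ itself, Theorem \ref{T15} gives ${\cal R}(p_j) = {\cal R}(0)$; right-continuity ${\cal R}(p_j+) = {\cal R}(0)$ follows because, for $p$ slightly above $p_j$, the trajectory has ``skipped'' the zero at $\beta + k_j T$ and the new $U$-shape starts essentially at height $0$ in a configuration continuously close to the one starting from $p = 0$.

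The strict jump ${\cal R}'(p_j-) < {\cal R}'(p_j+)$ would be established by comparing the one-sided limits of $\Delta(q(p))$ at $p = p_j$, using the explicit form of $\Delta$ together with the fact that $\mu(q)$ and $\nu^*(q)$ (and hence the weights inside the formula for $\Delta$) jump discontinuously at $p_j$: on the right, the new $\mu - q$ restarts at a shorter value because the $U$-shape now lives past the skipped minimum, and in the regime $\Delta < 0$ this strictly increases $\Delta$. The main obstacle I expect is the rigorous justification of right-continuity at each $p_j$ and of well-ordering of $\{p_j\}$ (no accumulation); both rest on Theorem \ref{T15} and the continuous dependence of solutions of (\ref{meq}) on initial data, used to verify that as $p$ crosses $p_j$ the old peak defining $\nu^*$ is cleanly handed off to a peak of the next oscillation of $\tilde f$, without intermediate touchings of $\{(\beta + kT, 0)\}$.
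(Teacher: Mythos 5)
Your overall route coincides with the paper's: the same split of the domain at $\tilde f(\beta_1)$, the same use of Theorem~\ref{T15} and the Corollary~\ref{Cor20} argument to locate the discontinuities and obtain ${\cal R}(p_j-)=0$, ${\cal R}(p_j)={\cal R}(0)$ with right continuity, and the same extension of ${\cal R}'(p)=\Delta(q(p))$ to $q\le\beta_1$ via the fundamental solution $v$ of (\ref{lde}) (the paper carries this computation out explicitly, using that $u(t,p)$ solves the genuine delay equation on $[q,\mu(q)]$ and that the terms containing $\partial_p\mu(q)$ and $\partial_p q$ cancel because $u'(\nu^*,p)=0$ and $u'(s,p)=0$ on $[q-h,q]$).

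There is, however, one step that fails as written: your justification of ${\cal R}'(p_j-)<{\cal R}'(p_j+)$. You attribute the jump to the quantity $\mu(q)-q$ ``restarting at a shorter value'' across $p_j$. In fact $\mu(q)$ is determined by the local geometry of the $U$-shaped arc near its minimum (it solves $u(t,p)=u(t-h,p)$ on the rising branch), so it is continuous --- indeed $C^1$ --- across $p_j$; consequently the bracket $v(\mu-q)+b\int_{\mu-q-h}^{\mu-q}v(s)\,ds$ in $\Delta(q)$ varies continuously through $p_j$ and contributes nothing to the jump. The only discontinuous ingredient is $\nu^*(q)$: at $p_j$ the qualified maximum is handed off to the next oscillation of $\tilde f$, so $\nu^*$ increases by $\nu^*(\beta)-\beta$ and the factor $e^{(a+b)(\nu^*-\mu)}$ acquires the multiplier $e^{(a+b)(\nu^*(\beta)-\beta)}\in(0,1)$; since ${\cal R}'(p_j-)=\Delta<0$, this yields ${\cal R}'(p_j+)={\cal R}'(p_j-)\,e^{(a+b)(\nu^*(\beta)-\beta)}>{\cal R}'(p_j-)$. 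Your mechanism, were it operative, would perturb the bracket factor, whose sign of variation is not controlled, so the stated inequality does not follow from your argument. Separately, the point you defer as ``the main obstacle'' --- that the touchings of $\{(\beta+kT,0)\}$ occur at successive values of $k$ with no intermediate ones --- is settled in the paper by a short monotonicity argument: if $\nu^*(\hat q)<\beta+T$ for some $\hat p>p_1$, the negativity of ${\cal R}'$ forces $\nu^*(q)<\beta+T$ for all $q\in(\hat q,\beta)$, a contradiction; an appeal to continuous dependence alone does not close this.
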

\begin{proof} 
By Theorem \ref{T15}, ${\cal R}$ is continuous at $p$ if
$u(t,p)$ is $U$-shaped and if the graph of $u(t,p)$
does not intersect the set $\{(\beta+kT,0): k \in \N,\, k\geq 1\}$
on the interval $(q,\nu^*(q))$.
Suppose that ${\cal R}$ is continuous at a point $p_0=p(q_0)$, with $q_0 < \beta_1.$  We claim that ${\cal R}'(p_0) = \Delta (q_0)$. 
Indeed, since $q_0 < \beta_1$, we have that $\lambda(q_0) = q_0+h < \mu(q_0)$ and therefore, for all $p$ close to $p_0$, it holds 
\begin{align*}
{\cal R}(p) &= u(\nu^*(q),p) = u(\mu(q),p)e^{(a+b)(\nu^*(q)-\mu(q))} +\int^{\nu^*(q)}_{\mu(q)}e^{(a+b)(\nu^*(q)-s)}f(s)ds,\\
\noalign{\medskip}
{\cal R}'(p) &= e^{(a+b)(\nu^*(q)-\mu(q))}\left( \partial_p(u(\mu(q),p)) - ((a+b)u(\mu(q),p)+f(\mu(q))\partial_p\mu(q)\right).
\end{align*}
Now, we have to calculate the partial derivative $\partial_p(u(\mu(q),p))$. A key observation here is that, since $u(t,p)$ is $U$-shaped, it satisfies the following 
delay differential equation on $[q,\mu(q)]$: 
$$
u'(t)= au(t)+bu(t-h)+f(t), \ t \in [q,\mu(q)], \quad u(s)=p, \ s \in [q - h,q]. 
$$
Thus, using the above mentioned fundamental solution $v$, from \cite[Section 1.6]{HD} we obtain that 
$$
u(\mu(q),q)= v(\mu(q)-q)p+ bp\int_{q-h}^qv(\mu(q)-s-h)ds+\int_q^{\mu(q)}v(\mu(q)-s)f(s)ds. 
$$
As a consequence, since $u(\mu(q),p)=u(\mu(q)-h,p)$, we find that
\begin{align*}
\partial_pu(\mu(q),q)&= u'(\mu(q),q) \partial_p\mu(q)+v(\mu(q)-q)+ b\int_{q-h}^qv(\mu(q)-s-h)ds+
\\
&\left(-v'(\mu(q)-q)p-v(\mu(q)-q)f(q) +bpv(\mu(q)-q-h)-bpv(\mu(q)-q)\right)\partial_p q  =
\\
\noalign{\medskip}
& u'(\mu(q),q) \partial_p\mu(q)+v(\mu(q)-q)+
 b\int_{q-h}^qv(\mu(q)-s-h)ds= 
\\
&\left[(a+b)u(\mu(q),q)+f(\mu(q))\right] \partial_p\mu(q)+v(\mu(q)-q)+
 b\int_{q-h}^qv(\mu(q)-s-h)ds. 
\end{align*}
In this way, 
\begin{equation} \label{RDE}
{\cal R}'(p) = e^{(a+b)(\nu^*(q)-\mu(q))}\left[ v(\mu(q)-q)+
 b\int_{q-h}^qv(\mu(q)-s-h)ds\right] = \Delta(q). 
\end{equation}
Next, integrating  equation (\ref{ch}) we find that  $\Delta (q)$
is a combination of some  elementary functions depending
on $a$, $b$, $h$, $\lambda (q)$, $\mu (q)$ and $\nu^*(q)$. 
The continuous dependence of $\lambda (q)$, $\mu (q)$ and $\nu^*(q)$ on $q$
belonging to some small neighbourhood ${\cal O}$ of $q_0$ implies the 
continuity of ${\cal R}'(p) = \Delta (q)$ in ${\cal O}$.

Observe also that the sign of ${\cal R}'(p)$ is completely defined by the factor given in brackets in  (\ref{RDE}). 
In view of  the $U$-shaped form of $u(t,p)$, the function $\mu(q)$ is $C^1$-smooth so that the aforementioned factor depends continuously on $p$. Differently,
$\nu^*(q)$ is discontinuous at the preimages of the discontinuity points $p_j=\tilde f(q_j)$ of ${\cal R}$.  Assuming that there exist ${\cal R}'(p_j+)$ and 
${\cal R}'(p_j-)$, we find immediately that 
$$
{\cal R}'(p_j+) = {\cal R}'(p_j-)  e^{(a+b)(\nu^*(\beta)-\beta)} < {\cal R}'(p_j-). 
$$

By Corollary \ref{Cor20}, either ${\cal R}$ is continuous on $K$ or there exists a leftmost discontinuity point $p_1$. 
In the first case,  ${\cal R}$ has a unique critical point $\tilde f(\tau)$ on $K$ and $\nu^*(q) < T+\beta$ for all $q$. In the second case, ${\cal R}$ is continuous and strictly decreasing on $[\tilde f(\tau),p_1)$, with  ${\cal R}(p_1-) = 0, \ {\cal R}(0) = {\cal R}(p_1)$.  

Next,  we claim that $\nu^*(q) > \beta+T$ for $p>p_1$. Indeed, if $\nu^*(\hat q) < \beta+T$ for some $\hat p = \tilde f(\hat q) >p_1$, then the negativity of 
${\cal R}'(p)$ yields  $\nu^*(q) < \beta+T$ for all $q \in (\hat q, \beta)$, a contradiction. Therefore, considering the $U$-shaped form of $u(t,p)$ and 
the inequality ${\cal R}'(p)<0$,  we conclude that the graph of $u(t,p)$ does not contain the point $(\beta+T,0)$ for $p>p_1$.  This allows us to repeat the argumentation 
of Corollary \ref{Cor20} for the case when $\nu^*(q) \in (2T, \beta+2T)$. In particular, we obtain that  ${\cal R}$ is continuous and strictly decreasing on  some maximal open right neighbourhood  ${\cal O}_1$ of $p_1$ and that, if  $p_2:= \sup {\cal O}_1$
is an interior point of $K$, then ${\cal R}(p_2-) = 0, \ {\cal R}(0) = {\cal R}(p_2)$. 

By applying repeatedly the above procedure, we construct the sequence $\{p_j\}$ with the properties mentioned in the statement of the theorem. 
\qed
\end{proof}

\begin{corollary}  \label{cor26}  Let $D_j$ be the intervals defined in the statement of Theorem~\ref{T25}.
Suppose that ${\cal R}(0) \in D_m$ for some $m\geq 1$. Then  equation (\ref{meq}) has $m$ sine-like periodic solutions $p_j(t)$
with  minimal periods $jT$ and such that $\zeta_j:=\max_\R p_j(t) < \max_\R p_k(t)$ for each pair of indexes $j <k$. 
\end{corollary}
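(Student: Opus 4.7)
The plan is to apply the intermediate value theorem on each subinterval $D_j$, $j=1,\dots,m$, to locate a fixed point $\zeta_j$ of the return map ${\cal R}$, and then invoke the correspondence between fixed points of ${\cal R}$ and periodic solutions of (\ref{meq}) mentioned after Lemma~\ref{L5} to extract, for each $\zeta_j$, a sine-like $jT$-periodic solution $p_j(t)$ with $\max_\R p_j(t)=\zeta_j$. Strict monotonicity of the sequence $\{\zeta_j\}$ will follow directly from the pairwise disjointness and the natural ordering of the intervals $D_j$.

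For each $j\in\{1,\dots,m\}$, consider the continuous function $g_j(p):={\cal R}(p)-p$ on $D_j=[p_{j-1},p_j)$, with the convention $p_0:=0$. By Theorem~\ref{T25}, the right-continuity property ${\cal R}(p_i)={\cal R}(0)$ at every discontinuity point, together with ${\cal R}(p_j-)=0$, yields $g_j(p_{j-1})={\cal R}(0)-p_{j-1}$ for $j\geq 2$ and $g_1(0)={\cal R}(0)>0$, while $\lim_{p\to p_j-}g_j(p)=-p_j<0$. Since by hypothesis ${\cal R}(0)\in D_m=[p_{m-1},p_m)$, we have $p_{j-1}\leq p_{m-1}\leq{\cal R}(0)$ for every $j\leq m$, so $g_j(p_{j-1})\geq 0$. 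The intermediate value theorem therefore provides $\zeta_j\in[p_{j-1},p_j)$ with ${\cal R}(\zeta_j)=\zeta_j$. The enclosures $\zeta_j\in[p_{j-1},p_j)$ and $\zeta_{j+1}\in[p_j,p_{j+1})$ immediately give $\zeta_j<p_j\leq\zeta_{j+1}$, hence the strict ordering $\zeta_1<\zeta_2<\dots<\zeta_m$.

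Next, let $q_j\in[0,\beta)$ be the unique preimage of $\zeta_j$ under $\tilde f$ on $[0,\beta]$. Because $\zeta_j\in D_j$, Theorem~\ref{T25} places $\nu^*(q_j)\in(jT,jT+\beta)$; combined with ${\cal R}(\zeta_j)=\tilde f(\nu^*(q_j)\bmod T)=\tilde f(q_j)$ and the injectivity of $\tilde f$ on $[0,\beta]$, this forces $\nu^*(q_j)=q_j+jT$. Appealing to the correspondence between fixed points of ${\cal R}$ and periodic solutions of (\ref{meq}) stated after Lemma~\ref{L5}, the value $\zeta_j$ determines a $jT$-periodic solution $p_j(t)$ whose consecutive qualified local maxima are all equal to $\zeta_j$ and spaced by exactly $jT$; the $U$-shaped hypothesis of Theorem~\ref{T25} then guarantees that $p_j(t)$ has a single peak per period $jT$ (so is sine-like) and that its global maximum over $\R$ equals $\zeta_j$. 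Minimality of the period is clear, for a smaller period $kT$ with $k<j$ would reduce the inter-peak distance to $kT$ and thereby place $\zeta_j$ in $D_k$, contradicting $D_j\cap D_k=\emptyset$. The main technical obstacle in this program is the last step: promoting the scalar fixed point $\zeta_j$ to an honest periodic function of (\ref{meq}), since ${\cal R}$ is built from initial data constant on $[q_j-h,q_j]$ whereas a true periodic solution is typically not constant on any window of length $h$. Making the correspondence rigorous should use the compact global attractor of Lemma~\ref{Lem7} and an $\omega$-limit argument applied to the orbit started from the constant initial datum $\zeta_j$, whose qualified-maxima sequence is identically $\zeta_j$ and which therefore accumulates onto a genuine $jT$-periodic orbit realizing $\zeta_j$ as its global maximum.
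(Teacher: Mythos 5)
Your argument is correct and is essentially the paper's: the paper's entire proof is the one-line observation that, by Theorem~\ref{T25}, ${\cal R}$ has a fixed point $\zeta_i$ in each $D_i$, $i=1,\dots,m$ (your IVT computation with ${\cal R}(p_{i-1})={\cal R}(0)\geq p_{i-1}$ and ${\cal R}(p_i-)=0$ just makes that explicit), after which the correspondence stated after Lemma~\ref{L5} is invoked. The only deviation is your closing paragraph: the $\omega$-limit detour is not needed, because the solution launched from the constant datum $\zeta_j$ is \emph{exactly} $jT$-periodic for $t\geq q_j$ --- the right-hand side of (\ref{meq}) sees the history only through its maximum, and condition ({\bf M}) guarantees $\max_{s\in[\nu^*-h,\nu^*]}u(s,\zeta_j)=\zeta_j=\max_{s\in[q_j-h,q_j]}u(s,\zeta_j)$, so uniqueness of solutions propagates $u(t+jT,\zeta_j)=u(t,\zeta_j)$ forward from $t=q_j$ step by step.
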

\begin{proof} If ${\cal R}(0) \in D_m$ then ${\cal R}$ has exactly
$m$ fixed points $\zeta_i \in D_i, \ i = 1,\dots,m$. 
\qed 
\end{proof}
\begin{remark}
\label{R27} Let $v$ denote the fundamental solution of  (\ref{lde}), and consider the function 
$$
V(t) = v(t)+b\int_{t-h}^{t}v(s)ds, \quad t \geq 0. 
$$
Note that $V(0)=1$. Suppose that $V(t) >0$ for $t \in [0,\alpha_*)$ and $V(t) <0$ for $t \in (\alpha_*, \beta_*)$. 
Assume that all conditions of Theorem \ref{T23} hold, and let each $u(t,p)$ be $U$-shaped.  Then 
the inequalities $\alpha_*< \mu (q)- q <  \beta_*$ clearly guarantee  that $\Delta (q) < 0$.  

As an application, consider the equation (\ref{ex2}) defined in  Example~\ref{example32}, for which $\alpha_* \approx1.2,$ $\beta_*\approx12.11$.  Since $\mu(q)-q > 1.5\pi> \alpha_*$
for $q < q_0$, and  $\mu (q)- q< 12.11$ if $\mu(q) \leq 1.5\pi,\ q \geq - 0.5\pi$, we can conclude that ${\cal R}'(p) <0$ for all $p\in (\tilde f(q_0),p_1)$  (in the Appendix, we will prove that the corresponding solutions $u(t,p)$ are $U$-shaped).

\end{remark}

\section{Two examples}\label{sec:examples}
In this section, we give two applications of our results.

\subsection{Equation with  multiple attracting solutions}\label{S31}
The equation 
\begin{equation}
u'(t)=-\max \limits_{s\in [t-3\pi/2,t]}u(s) +f(t),
\label{ex1}
\end{equation}
with $f(t) = - \sin t + \max_{t-3\pi/2\leq \tau\leq t} \cos \tau$ was studied in  \cite{PT}.
Function $u_1(t) = \cos t$ is an evident solution of (\ref{ex1}) and 
the existence of another $8\pi$-periodic solution $u_2$ was established in the cited work. However,   the  full description of the dynamics of  (\ref{ex1}) was not provided in \cite{PT}. This can be easily done by analysing the return map ${\cal R}$ for (\ref{ex1}), whose graph  is presented in Figure~\ref{Fig1}.
We see that, in fact, the minimal period
of $u_2$ is $4\pi$.
Moreover, $u_1$, $u_2$ and $u_3(t)=u_2(t+ 2\pi)$
exhaust the set of all periodic solutions to (\ref{ex1}),
and $u_2$, $u_3$ attract all solutions
to (\ref{ex1}) (clearly, excepting $u_1$).
We find that  ${\cal R}'(1) = (1-7\pi/4+\pi^2/32)\exp(-\pi/4) \approx- 1.91$, 
which coincides with the unique  non-zero characteristic multiplier
determined by the variational equation along $u_1(t) = \cos t$
(see \cite[Theorem 1.2]{PT} for more details).


\begin{figure}[htb]
{\includegraphics[width=6.5cm]{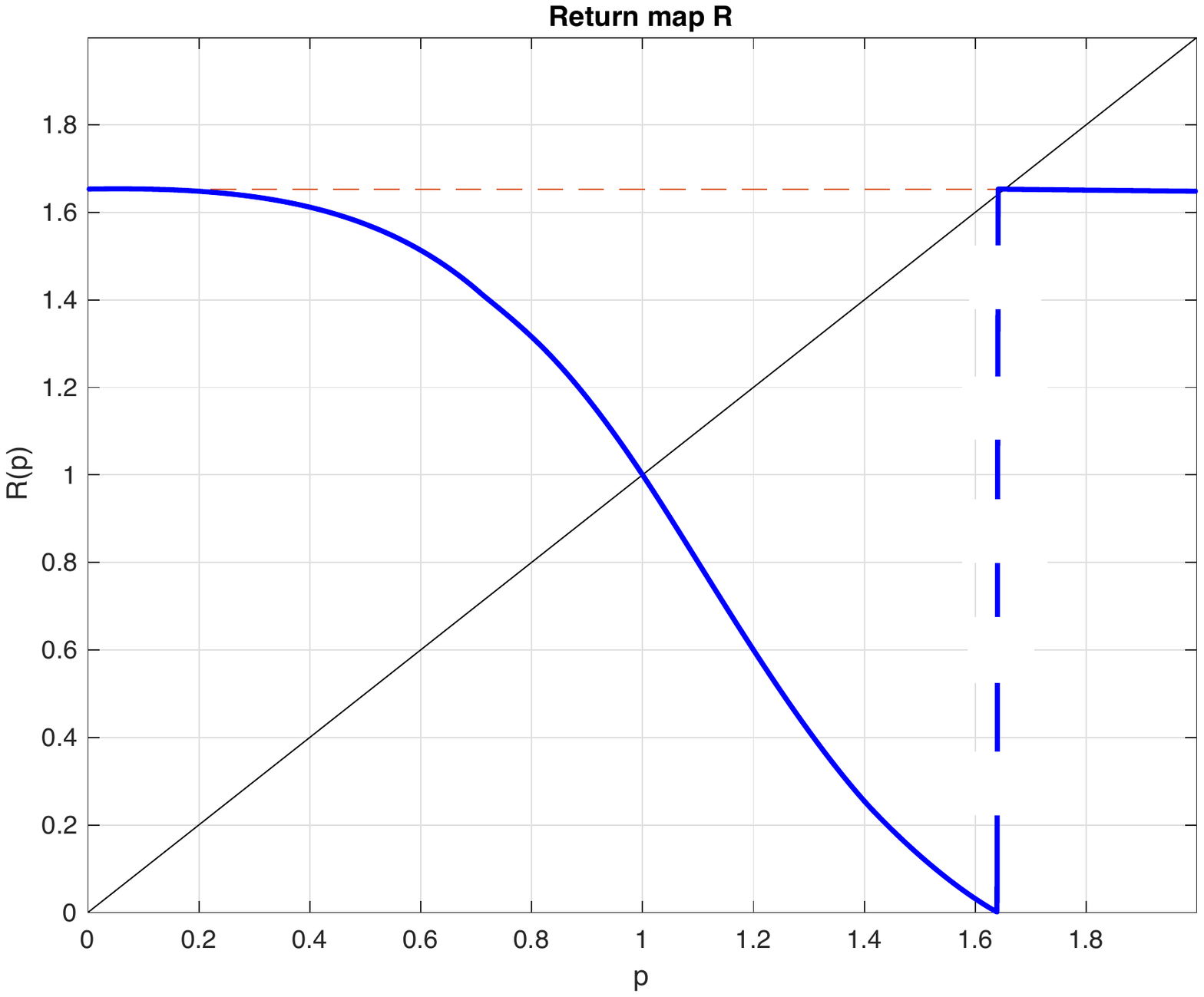}\hspace*{0.5cm}\includegraphics[width=6.5cm]{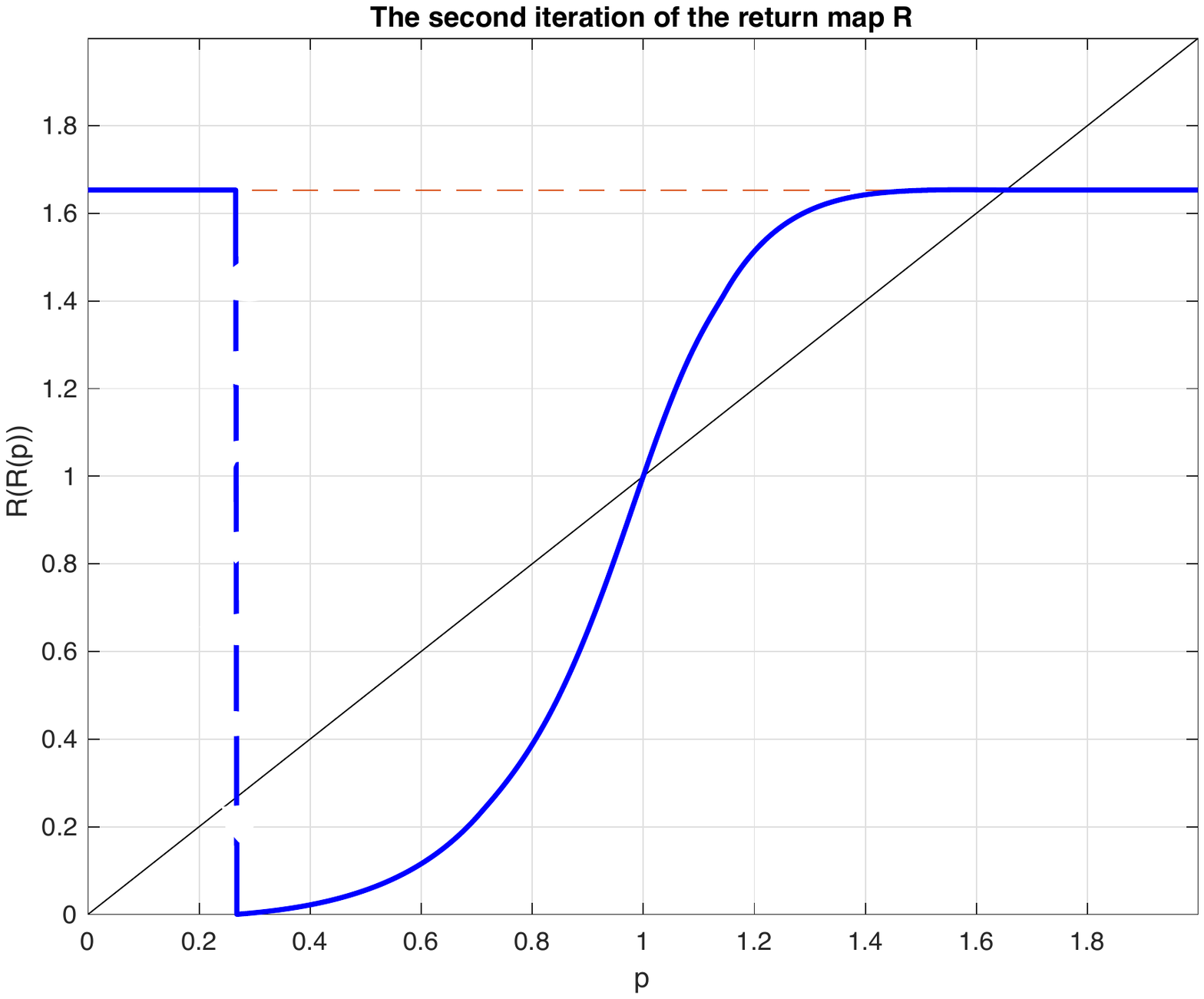}}
\caption{  Return map ${\cal R}$ for equation (\ref{ex1}) (on the left) and its second iteration ${\cal R}^2$ (on the right). The dashed horizontal line corresponds to ${\cal R}(0)$. Note that ${\cal R}^2$ and ${\cal R}$ share one point of discontinuity (not appreciable on the right frame).} 
\label{Fig1}
\end{figure}

\begin{figure}[htb]
\centering \includegraphics[width=8cm]{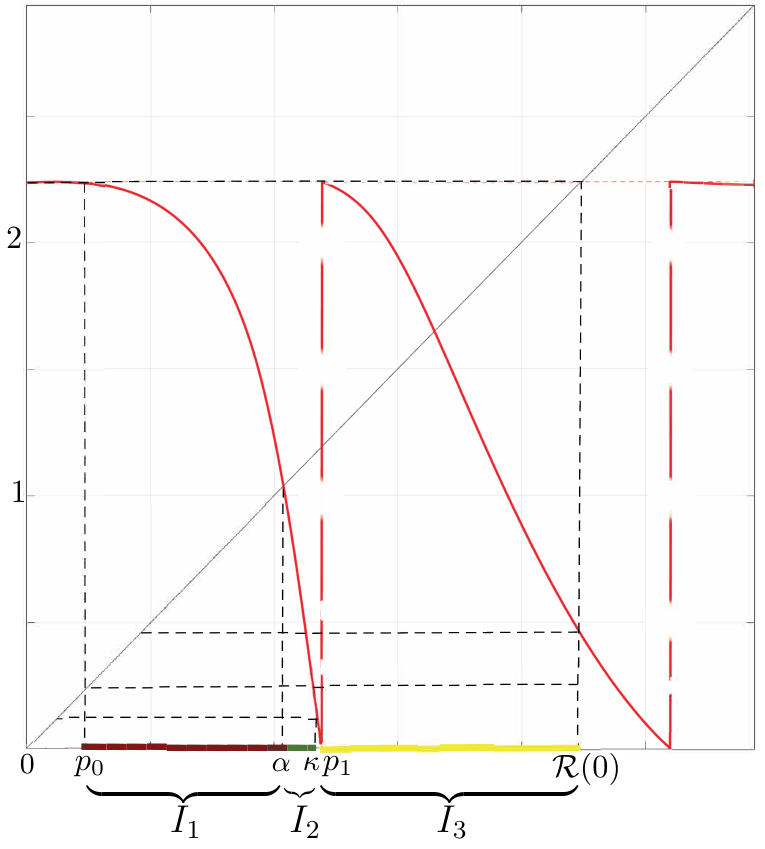}
\caption{Graph of the return map ${\cal R}$ for equation (\ref{ex2}) (discontinuous red solid curve). The upper dashed horizontal line corresponds to ${\cal R}(0)$. 
The closed subintervals $I_1$ (brown), $I_2$ (green), $I_3$ (yellow) of $K$ have pairwise disjoint interiors and satisfy the relations $I_2\cup I_3 \subset {\cal R}(I_1)$, $I_1 \subset {\cal R}(I_2) $ , $I_2\cup I_3 \subset {\cal R}(I_3)$.}
\label{Fig2}
\end{figure}

\subsection{Chaotic behavior in the Magomedov equation}
\label{sec:chaos}
The forcing term $f(t)$ in (\ref{ex1}) is 
close to the  function $g(t) = 1 - \sin t$. In fact, the replacement of  
$f(t)$ with $1 - \sin t$ in (\ref{ex1}) produces dynamically  insignificant changes in the return
map so that the modified system has  the same
simple dynamics. However, by adding the linear  term
$0.32 u(t)$ to (\ref{ex1}),  the behavior of the solutions changes dramatically. \footnote {The specific choice of the parameters $a=0.32$ and $b=-1$ 
is mostly motivated by some advantages in the graphical representation of the solutions and in establishing the continuity properties of $\cal R$  in the Appendix. 
Note also that, for some $a, b$, the map $\cal R$ can have an attracting cycle with a large basin of attraction. This possibility is excluded by the above choice of parameters (the rightmost continuous branch of the graph of ${\cal R}$  does not intersect the diagonal, compare with the left part of Figure~\ref{Fig1}).}
 Indeed, as we show below,  equation~(\ref{ex2}) introduced in Example~\ref{example32}
exhibits chaotic behaviour.

In the Appendix, we prove the following result (see also Figure~\ref{Fig2}, which represents the 
return map ${\cal R}$ for  (\ref{ex2})).


\begin{thm} \label{T24} The return map ${\cal R}:K\to K$ for  (\ref{ex2}) has exactly two points of discontinuity $p_1\approx1.11$ and $p_2\approx2.61$
on the interval $[0,p_2]\supset {\cal R}(K)$, where 
$${\cal R}(p_1)={\cal R}(p_2)= {\cal R}(0)\approx2.23,  \quad {\cal R}(p_1-)= {\cal R}(p_2-)=0.$$
Furthermore, ${\cal R}$ is differentiable on $[0,1.316]\setminus\{p_1\}$ and  has a unique critical point $p_c\approx0.108$ on this interval, where it reaches its absolute maximum. Finally, ${\cal R}'(p_1-)<{\cal R}'(p_1+)$, ${\cal R}(p) >p$ for all 
$p \in [0,0.9]$ and ${\cal R}({\cal R}(0)) < 0.9$. 
\end{thm}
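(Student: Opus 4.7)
The plan is to specialize Theorem~\ref{T25} to equation (\ref{ex2}) with $a=0.32$, $b=-1$, $h=3\pi/2$, $T=2\pi$, $\beta=\pi$, and $f(t)=1-\sin t$, and then perform quantitative estimates on compact sets to confine the discontinuities to exactly two points and to pin down the asserted numerical values. The ingredients are already in place: Example~\ref{example32} uses Corollary~\ref{cor17} to produce $\beta_1\approx 0.39289$, continuity of ${\cal R}$ on $[0,0.9]$, and the inequality ${\cal R}(p)>p$ there, while Example~\ref{example24} uses Theorem~\ref{T23} to obtain $C^1$-smoothness on $[0,0.9]$ with a unique critical point $p_c=\tilde f(q_0)\approx 0.108$ at which ${\cal R}$ attains its absolute maximum on that interval. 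This handles items (4), (5) and (7) of the statement in one shot, and gives continuity on an initial neighborhood of $0$ containing $p_c$.

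Next I would verify the two remaining hypotheses of Theorem~\ref{T25}: (i) the $U$-shape of $u(t,p(q))$ for all $q\in(\alpha,\beta]$ for some $\alpha<\beta_1$, which reduces to checking that the single interior critical point on $(q,q+h)$ identified in the discussion following ({\bf T}) is indeed the unique interior critical point on all of $(q,\nu^*(q))$, and that the decomposition $U(t,p)=p$, $U(t,p)=u(t-h,p)$, $U(t,p)=u(t,p)$ of the three subintervals is correct; and (ii) the negativity $\Delta(q)<0$ for $q\in(\alpha,\beta_1]$. For (ii) I would invoke Remark~\ref{R27}, numerically locating $\alpha_*\approx 1.2$ and $\beta_*\approx 12.11$ for the fundamental solution $v$ of (\ref{lde}) with our parameters, and then checking $\alpha_*<\mu(q)-q<\beta_*$ on the relevant range: the lower bound is free because $\mu(q)-q\geq h=1.5\pi>\alpha_*$, while the upper bound $\mu(q)-q<\beta_*$ follows from a uniform bound on $\mu(q)$ on $q\in(\alpha,\beta]$. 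Once these hypotheses are verified, Theorem~\ref{T25} automatically generates the increasing sequence $\{p_j\}$ of discontinuities with ${\cal R}(p_j-)=0$, ${\cal R}(p_j)={\cal R}(0)$, and the jump inequality ${\cal R}'(p_j-)<{\cal R}'(p_j+)$, giving items (2), (3) and (6) for every $p_j$ that falls in $K$.

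It remains to show that the sequence contains exactly the two elements $p_1\approx 1.11$ and $p_2\approx 2.61$. By Corollary~\ref{cor26}, this is equivalent to checking ${\cal R}(0)\in D_2$, i.e.\ $p_1<{\cal R}(0)<p_2$ with ${\cal R}({\cal R}(0))<p_1$. Combined with the monotonicity already established on each $D_j$, the bound ${\cal R}({\cal R}(0))<0.9<p_1$ gives ${\cal R}(K)\subset[0,p_2]$ and thereby cuts off the sequence after $p_2$; this also yields item (8). The main obstacle—deferred to the Appendix—is therefore the genuinely quantitative work: rigorous certification of the numerical values $p_1\approx 1.11$, $p_2\approx 2.61$, ${\cal R}(0)\approx 2.23$, and the strict inequality ${\cal R}({\cal R}(0))<0.9$, through interval-arithmetic evaluations of the solution on compact time windows using the closed-form integrals for the ODE segments and the fundamental solution $v$ appearing in $\Delta(q)$. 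This numerical backbone is precisely what confines the dynamics to two discontinuities and, in Section~\ref{sec:chaos}, supports the horseshoe-type containments $I_2\cup I_3\subset{\cal R}(I_1)$, $I_1\subset{\cal R}(I_2)$, $I_2\cup I_3\subset{\cal R}(I_3)$ illustrated in Figure~\ref{Fig2}.
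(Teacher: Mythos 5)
Your proposal correctly reuses Examples \ref{example32} and \ref{example24} for the behaviour of ${\cal R}$ on $[0,0.9]$ and correctly identifies Remark \ref{R27} as the tool for $\Delta(q)<0$; this matches the paper's treatment of the first discontinuity. However, there is a genuine gap in how you produce the \emph{second} discontinuity $p_2\approx 2.61$. Theorem \ref{T25} only applies on the $p$-range $[0,\tilde f(\alpha))$ where the solutions $u(t,p(q))$ are $U$-shaped for $q\in(\alpha,\beta]$, and in this example $U$-shapedness can only be certified for $q\geq 0.105$, i.e.\ for $p\leq\tilde f(0.105)\approx 1.316$ (this is exactly Corollary \ref{Cor31} in the Appendix, resting on the sign computation $u'(t)>0$ on $[q+h,q+2h]$ for $q\in[0.105,0.4]$). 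That window captures $p_1\approx 1.11$ but falls far short of $p_2\approx 2.61$; for $q\leq\beta_1$ the solution stays below $p$ on $(q,q+h]$ and may acquire an extra local maximum on $(T,q+h]$, so you cannot simply extend $\alpha$ leftward until $\tilde f(\alpha)>p_2$. The paper therefore switches methods for large $p$: it claims no differentiability beyond $1.316$, verifies condition ({\bf M}) directly on $[1.26,\,2/0.68]$ via Lemma \ref{L32} (a comparison argument showing the graph of $u(t,p)$ avoids the point $(2.5\pi,0)$), and then invokes Theorem \ref{T15} together with the continuation argument from the proof of Corollary \ref{Cor20} to get continuity of ${\cal R}$ on $[p_1,p_2)$ and the existence of $p_2$ with ${\cal R}(p_2-)=0$, ${\cal R}(p_2)={\cal R}(0)$. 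Your proof as written never establishes ({\bf M}) (or $U$-shapedness) on the upper part of $K$, so items (2), (3) and the exact count of discontinuities are not actually covered.

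A second, smaller issue: your appeal to Corollary \ref{cor26} to reduce ``exactly two discontinuities'' to ``${\cal R}(0)\in D_2$'' is a misreading. That corollary counts fixed points of ${\cal R}$ (hence periodic solutions) given where ${\cal R}(0)$ lands among the intervals $D_j$; it says nothing about how many discontinuity points $p_j$ lie in $K$ or in $[0,p_2]$. The number of discontinuities is governed by how many times $\nu^*(q)$ jumps past a point of the form $\beta+jT$ as $q$ decreases, and the paper pins this down by evaluating $\nu^*$ at explicit test values (checking $\nu^*\in(3.5\pi,4.5\pi)$ versus $(5.5\pi,6.5\pi)$ at suitable $q$), combined with the bound $p_2>2.53>{\cal R}(0)$ and ${\cal R}({\cal R}(0))\approx 0.45<0.9$ to confirm ${\cal R}(K)\subset[0,p_2]$. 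You would need to add both of these ingredients --- the ({\bf M})-based continuity argument on $[1.26,2/0.68]$ and the explicit localization of $\nu^*$ --- to close the proof.
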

This theorem implies the existence of a leftmost fixed point $\alpha \in (0.9,p_1)$ for ${\cal R}$. Let  $p_0 \in (0,p_1)$ be defined by ${\cal R}(p_0)={\cal R}(0)$ and let $\kappa \in (\alpha, p_1)$ be sufficiently close to 
$p_1$ to satisfy ${\cal R}(\kappa) < p_0$.   Consider the following closed subintervals of $K$ with  pairwise disjoint interiors
$$
I_1=[p_0,\alpha], \quad I_2 = [\alpha, \kappa], \quad I_3 = [p_1, {\cal R}(0)]. 
$$
These intervals are shown in Figure~\ref{Fig2}.  Clearly, the return map is continuous on each of these intervals and 
$$I_2\cup I_3 \subset {\cal R}(I_1), \quad I_1 \subset {\cal R}(I_2), \quad I_2\cup I_3 \subset {\cal R}(I_3).$$
Writing  the inclusion $I_1 \subset {\cal R}(I_2)$ in the form $I_2\to I_1$, and similarly the others, we obtain the following directed Markov graph associated with the collection 
$I_1, I_2,I_3$: 

\vspace{2mm}

\scalebox{0.7}
{
\tikzset{every picture/.style={line width=0.75pt}} 
\begin{tikzpicture}[x=0.75pt,y=0.75pt,yscale=-1,xscale=1]
\path (-100,300); 

\draw    (157,135) -- (278.13,88.71) ;
\draw [shift={(280,88)}, rotate = 519.0899999999999] [color={rgb, 255:red, 0; green, 0; blue, 0 }  ][line width=0.75]    (10.93,-3.29) .. controls (6.95,-1.4) and (3.31,-0.3) .. (0,0) .. controls (3.31,0.3) and (6.95,1.4) .. (10.93,3.29)   ;
\draw    (276,96) -- (165.87,138.28) ;
\draw [shift={(164,139)}, rotate = 339] [color={rgb, 255:red, 0; green, 0; blue, 0 }  ][line width=0.75]    (10.93,-3.29) .. controls (6.95,-1.4) and (3.31,-0.3) .. (0,0) .. controls (3.31,0.3) and (6.95,1.4) .. (10.93,3.29)   ;
\draw    (160,147) -- (261.14,187.26) ;
\draw [shift={(263,188)}, rotate = 201.71] [color={rgb, 255:red, 0; green, 0; blue, 0 }  ][line width=0.75]    (10.93,-3.29) .. controls (6.95,-1.4) and (3.31,-0.3) .. (0,0) .. controls (3.31,0.3) and (6.95,1.4) .. (10.93,3.29)   ;
\draw    (275,172) -- (289.58,103.96) ;
\draw [shift={(290,102)}, rotate = 462.09] [color={rgb, 255:red, 0; green, 0; blue, 0 }  ][line width=0.75]    (10.93,-3.29) .. controls (6.95,-1.4) and (3.31,-0.3) .. (0,0) .. controls (3.31,0.3) and (6.95,1.4) .. (10.93,3.29)   ;
\draw    (271,197) .. controls (251.1,284.56) and (388.61,226.59) .. (292.47,189.56) ;
\draw [shift={(291,189)}, rotate = 380.49] [color={rgb, 255:red, 0; green, 0; blue, 0 }  ][line width=0.75]    (10.93,-3.29) .. controls (6.95,-1.4) and (3.31,-0.3) .. (0,0) .. controls (3.31,0.3) and (6.95,1.4) .. (10.93,3.29)   ;

\draw (122,130) node [anchor=north west][inner sep=0.75pt]   [align=left] {$ $};
\draw (139,133) node [anchor=north west][inner sep=0.75pt]    {$I_{1}$};
\draw (290,75) node [anchor=north west][inner sep=0.75pt]    {$I_{2}$};
\draw (272,176) node [anchor=north west][inner sep=0.75pt]    {$I_{3}$};
\end{tikzpicture}
}

\vspace{-5mm}

The  adjacency matrix ${\cal A}=\{a_{ij}\}$ of the graph  is defined as follows: $a_{ij}=1$ if and only if there is an edge from vertex $I_i$ to vertex $I_j$; otherwise,  $a_{ij}=0$. Thus:
$$
{\cal A}=   \begin{bmatrix}
0 & 1 & 1\\
1  & 0 & 0 \\
0 & 1  & 1
\end{bmatrix}. $$
Consider the space $\Omega_{\cal A}^+$ of all one-sided paths on the above Markov graph (for example, $\omega=(I_3,I_3,I_2,I_1,I_2,I_1,I_3, \dots)$)
provided with the metrisable  topology of component-wise convergence. It is easy to realise that $\Omega_{\cal A}^+$ is a closed perfect subspace of the product space $\{I_1,I_2,I_3\}^\N$ so that it is a Cantor set. Let $\sigma: \Omega_{\cal A}^+\to \Omega_{\cal A}^+$ denote the one-sided shift defined by $\sigma(\{I_{n_k}\})=\{I_{n_{k+1}}\}$ 
(e.g. $\sigma(I_3,I_3,I_2,I_1,I_2,I_1,I_3, \dots)=(I_3,I_2,I_1,I_2,I_1,I_3, \dots)$). Since all elements of the matrix ${\cal A}^3$ are positive (hence,  the matrix $\cal A$ is transitive \cite[Definition 1.9.6]{KH}),  the dynamical system  $\sigma: \Omega_{\cal A}^+\to \Omega_{\cal A}^+$  is topologically mixing and its periodic points are dense in $\Omega_{\cal A}^+$, see  \cite[Proposition 1.9.9]{KH}. Then, an application of Theorem 15.1.5,  Corollaries 1.9.5, 15.1.6, 15.1.8, and  Proposition~3.2.5  in \cite{KH}
yields the following result (notice that the greatest eigenvalue of  ${\cal A}$ is $\lambda_{{\cal A}}^{max}=(\sqrt{5}+1)/2$).


\begin{thm} \label{T29} There is a closed subset $J \subset I_1\cup I_2 \cup I_3$ and a continuous surjection $h:J \to \Omega_{\cal A}^+$ such that ${\cal R}(J)\subset J$ and the following diagram is commutative 

\vspace{2mm}

\scalebox{0.75}
{
\tikzset{every picture/.style={line width=0.75pt}} 

\begin{tikzpicture}[x=0.75pt,y=0.75pt,yscale=-1,xscale=1]
 \path (-50,300); 

\draw    (149,149) -- (290,150.97) ;
\draw [shift={(292,151)}, rotate = 180.8] [color={rgb, 255:red, 0; green, 0; blue, 0 }  ][line width=0.75]    (10.93,-3.29) .. controls (6.95,-1.4) and (3.31,-0.3) .. (0,0) .. controls (3.31,0.3) and (6.95,1.4) .. (10.93,3.29)   ;
\draw    (147,230) -- (287,230) ;
\draw [shift={(289,230)}, rotate = 180] [color={rgb, 255:red, 0; green, 0; blue, 0 }  ][line width=0.75]    (10.93,-3.29) .. controls (6.95,-1.4) and (3.31,-0.3) .. (0,0) .. controls (3.31,0.3) and (6.95,1.4) .. (10.93,3.29)   ;
\draw    (133,163) -- (132.04,212) ;
\draw [shift={(132,214)}, rotate = 271.12] [color={rgb, 255:red, 0; green, 0; blue, 0 }  ][line width=0.75]    (10.93,-3.29) .. controls (6.95,-1.4) and (3.31,-0.3) .. (0,0) .. controls (3.31,0.3) and (6.95,1.4) .. (10.93,3.29)   ;
\draw    (308,166) -- (308,217) ;
\draw [shift={(308,219)}, rotate = 270] [color={rgb, 255:red, 0; green, 0; blue, 0 }  ][line width=0.75]    (10.93,-3.29) .. controls (6.95,-1.4) and (3.31,-0.3) .. (0,0) .. controls (3.31,0.3) and (6.95,1.4) .. (10.93,3.29)   ;

\draw (122,137) node [anchor=north west][inner sep=0.75pt]    {$J$};
\draw (122,134) node [anchor=north west][inner sep=0.75pt]    {$$};
\draw (303,139) node [anchor=north west][inner sep=0.75pt]    {$J$};
\draw (122,223) node [anchor=north west][inner sep=0.75pt]    {$\Omega_{\cal A}^+$};
\draw (303,223) node [anchor=north west][inner sep=0.75pt]    {$\Omega_{\cal A}^+$};
\draw (206,130) node [anchor=north west][inner sep=0.75pt]    {$\cal R$};
\draw (210,215) node [anchor=north west][inner sep=0.75pt]    {$\sigma$};
\draw (110,177) node [anchor=north west][inner sep=0.75pt]    {$h$};
\draw (322,179) node [anchor=north west][inner sep=0.75pt]    {$h$};
\end{tikzpicture}
}
\vspace{-10mm}

\noindent Moreover, to each periodic orbit $\omega \in \Omega_{\cal A}^+$ corresponds at least one periodic point of the same period in $h^{-1}(\omega)$
so that ${\cal R}: J \to J$ has an infinite set of periodic solutions. In fact, 
the number of different $n$-periodic orbits of $\cal R$ is bigger than or equal to  the trace $Tr {\cal A}^n$, and the topological entropy of ${\cal R}: J \to J$ is at least $\log  ((\sqrt{5}+1)/2)>0$. 
\end{thm}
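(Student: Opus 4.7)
The plan is to realize the one-sided subshift $(\Omega_{\cal A}^+,\sigma)$ as a topological factor of a subsystem of $({\cal R},K)$ via the classical symbolic-dynamics construction (compare \cite[\S 2.5]{KH}), using the continuity of $\cal R$ on each of the intervals $I_1,I_2,I_3$ together with the covering relations $I_2\cup I_3\subset{\cal R}(I_1)$, $I_1\subset{\cal R}(I_2)$, $I_2\cup I_3\subset{\cal R}(I_3)$ already verified just before the statement.

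First I would build $J$ through admissible itineraries. For each admissible word $\omega=(I_{n_0},I_{n_1},\dots)\in\Omega_{\cal A}^+$, set
$$
J_\omega:=\{x\in I_{n_0}:{\cal R}^k(x)\in I_{n_k}\text{ for all }k\geq 0\}.
$$
Starting from finite truncations, the covering inclusions $I_{n_{k+1}}\subset{\cal R}(I_{n_k})$ and the intermediate value theorem applied to ${\cal R}|_{I_{n_k}}$ (continuous by Theorem~\ref{T24}) produce a nested sequence of nonempty closed subintervals of $I_{n_0}$, so that $J_\omega$ is nonempty and compact. Setting $J=\bigcup_{\omega}J_\omega$, one has ${\cal R}(J)\subset J$ (since ${\cal R}(J_\omega)\subset J_{\sigma\omega}$), $J$ is closed (by compactness of $\Omega_{\cal A}^+$ and continuity of all iterates of $\cal R$ on each $I_j$), and the itinerary map $h(x):=(I_{n_k})_{k\geq0}$, where ${\cal R}^k(x)\in I_{n_k}$, is a continuous surjection from $J$ onto $\Omega_{\cal A}^+$ satisfying $h\circ{\cal R}=\sigma\circ h$. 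The three intervals are pairwise disjoint except at the single boundary point $\alpha\in I_1\cap I_2$, where any convention for the label keeps $h$ well-defined and continuous.

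Next I would treat the periodic orbits. For an $n$-periodic admissible word $\omega$ one has ${\cal R}^n(J_\omega)\subset J_\omega$, and the same IVT argument used above produces a closed subinterval $K\subset I_{n_0}$ with ${\cal R}^n(K)\supset I_{n_0}\supset K$; applying the IVT to ${\cal R}^n(x)-x$ on $K$ yields a fixed point of ${\cal R}^n$ in $J_\omega$, hence a periodic point of $\cal R$ whose itinerary is $\omega$. Since the entries of ${\cal A}^n$ count length-$n$ admissible paths in the Markov graph and $\mathrm{Tr}\,{\cal A}^n$ counts closed $n$-loops, this yields the claimed lower bound on the number of $n$-periodic points. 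Finally, the topological entropy of the one-sided SFT $\sigma$ with transitive $\cal A$ is $\log\lambda_{\cal A}^{\max}=\log((\sqrt5+1)/2)$; since topological entropy does not decrease when passing from a factor to an extension, $h_{top}({\cal R}|_J)\geq\log((\sqrt5+1)/2)$.

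The main obstacle I expect is the very first step: because ${\cal R}|_{I_j}$ is not monotone (it has a critical point $p_c$ in the interior of $I_1$), the preimages ${\cal R}^{-1}(I_{n_{k+1}})\cap I_{n_k}$ need not be connected, and at every stage of the nested-intersection construction one must select a connected component of the preimage whose ${\cal R}$-image still covers the target interval. This is resolved by applying the IVT branch by branch, relying on the explicit qualitative information (monotonicity pieces, location of $p_c$, and the values of $\cal R$ at the endpoints of the $I_j$) furnished by Theorem~\ref{T24}.
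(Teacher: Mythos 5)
Your proposal is correct and takes essentially the same route as the paper: the paper's ``proof'' consists of invoking the standard Markov--graph/horseshoe machinery (Theorem 15.1.5, Corollaries 1.9.5, 15.1.6, 15.1.8 and Proposition 3.2.5 of \cite{KH}), whose proof is precisely the itinerary/nested--closed--interval construction, the IVT argument for periodic points, and the factor--entropy inequality that you write out. One remark: the obstacle you anticipate is actually vacuous, since $p_c\approx 0.108$ lies to the left of $p_0$ (which sits on the decreasing branch, being determined by ${\cal R}(p_0)={\cal R}(0)$ with $p_0>p_c$), so ${\cal R}$ is strictly monotone on each of $I_1,I_2,I_3$ and the standard covering lemma applies directly; the only genuine (and equally standard) subtlety is the itinerary ambiguity at the shared endpoint $\alpha\in I_1\cap I_2$, which the results cited from \cite{KH} take care of.
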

In this way,  equation (\ref{ex2}) has an infinite set of periodic solutions. In particular,  Figure~\ref{Fig2} shows that it has a 
$2\pi$-periodic solution $u_1$ with
$\alpha = \max_{t \in \R}u_1(t) \approx1.037,$
and a  $4\pi$-periodic solution $u_2$ with
$\gamma = \max_{t \in \R}u_2(t) \approx1.65$. The graph
of the second iteration ${\cal R}^2$ restricted
to the interval $[\alpha, \gamma]$ suggests that  ${\cal R}$ has an infinite set 
of unstable periodic solutions. In Figure~\ref{Fig3}, we represent two particular solutions of equation (\ref{ex2}): the  curve $(t,u(t))$ of the  solution $u=u(t,1)$, $t<350$, and the projection of the solution $u=u(t,0)$ on the plane $(u(t), u(t-h))$.

\begin{figure}[htb] 
\centering \includegraphics[width=6.9cm]{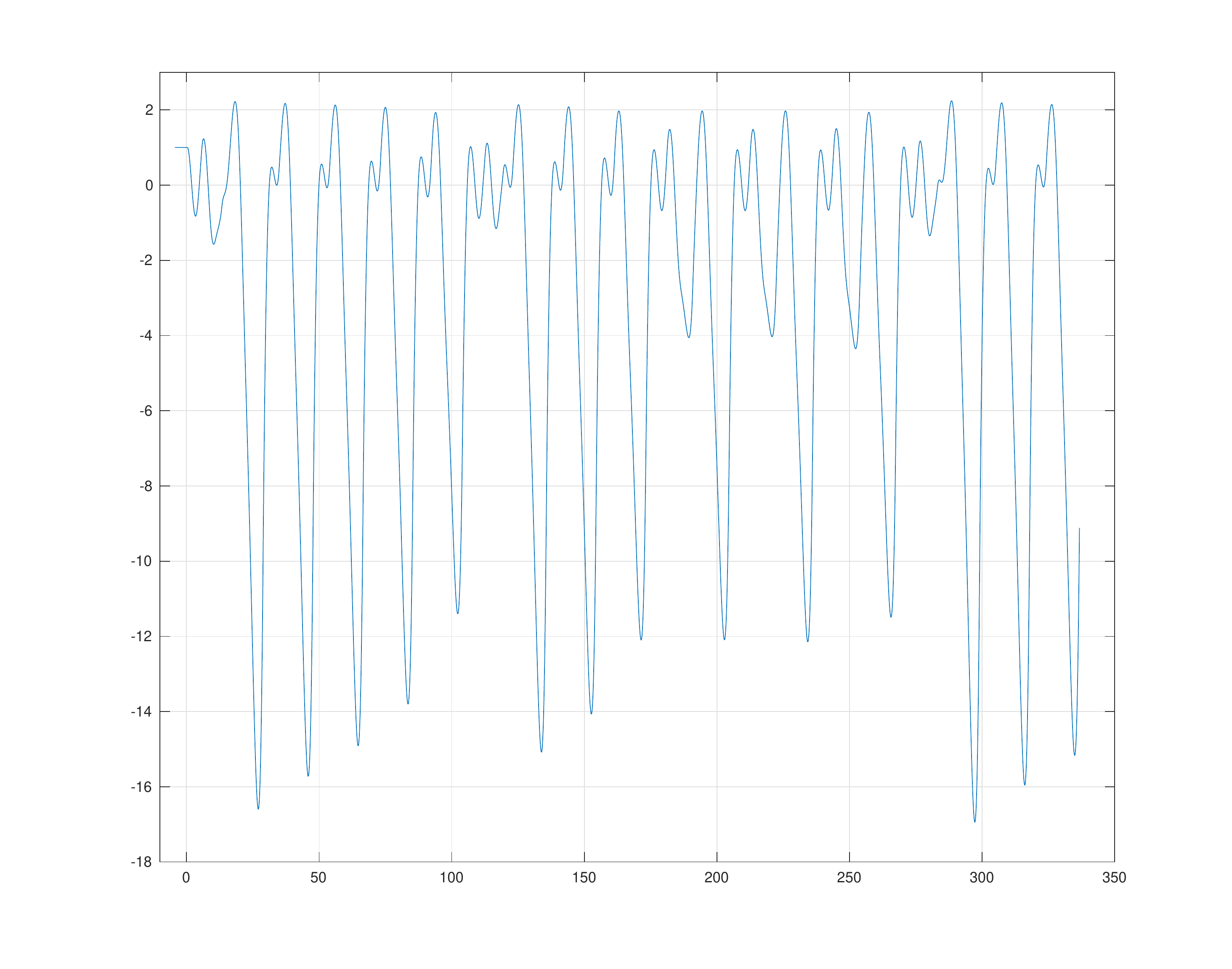} \includegraphics[width=6.4cm]{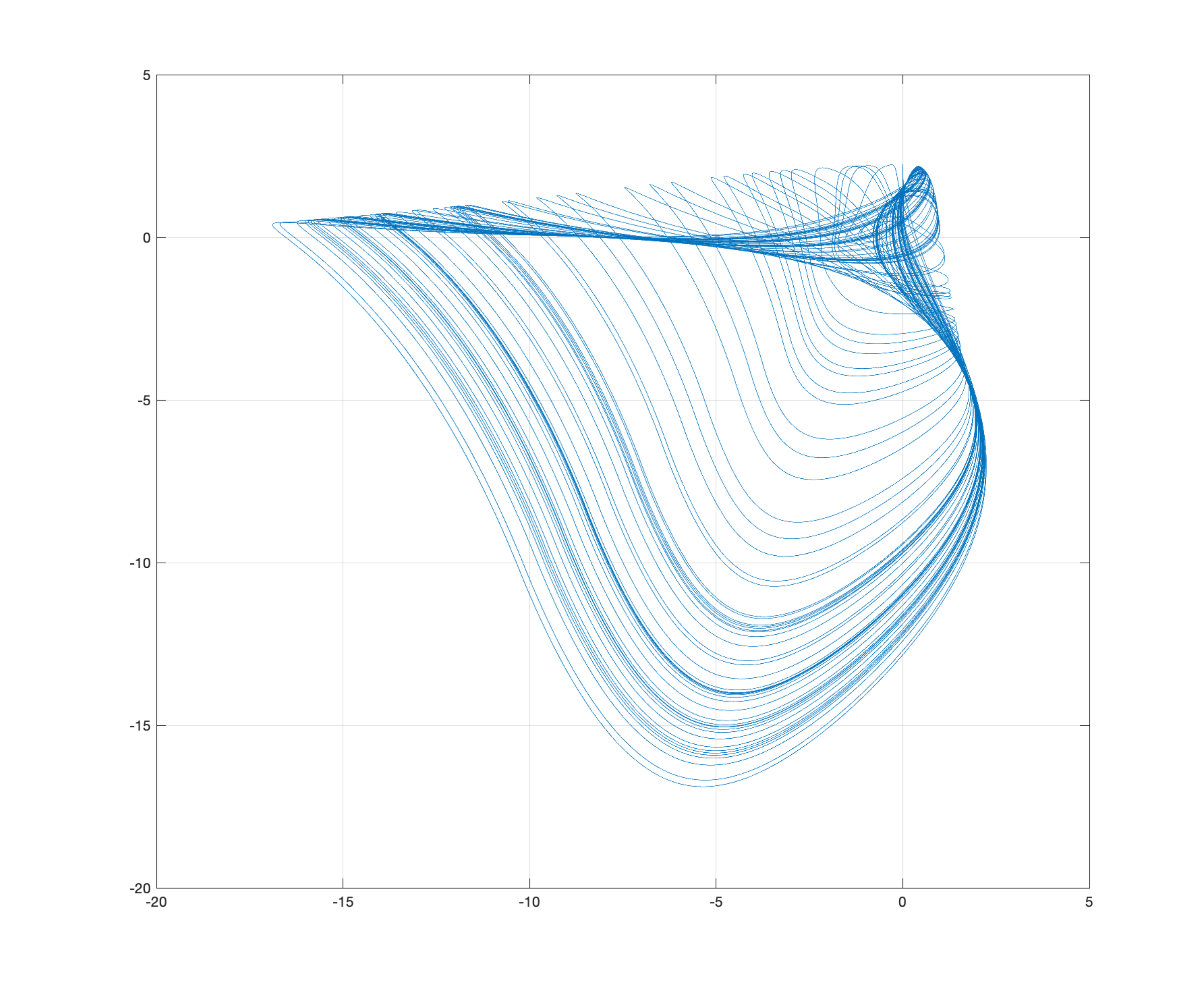}
\caption{On the left, we represent the solution $u=u(t,1)$ of equation (\ref{ex2}); on the right, the projection of  the solution $u=u(t,0)$ of (\ref{ex2}) on the plane $(u(t,0), u(t-h,0))$.} 
\label{Fig3}
\end{figure}

\section*{Appendix}
Here, we present a proof of  Theorem \ref{T24} based on the analysis of the explicit formulae for  the solutions 
of the initial value problem 
\begin{equation}
u'(t)= au(t) - u(t-h)+1 - \sin t, \ u(s)\equiv p= (1-\sin q)/d, \ s \in [q-h,q],
\label{x2}
\end{equation}
where $a = 0.32$, $h= 3\pi/2$, $d =0.68$ and $q \in [-0.5\pi, 0.4]$ (note that, by Example~\ref{example24}, $\beta_1 \approx 0.39289<0.4$). 
For our purposes, it suffices to integrate (\ref{x2}) on two steps: $[q,q+h]$ and $[q+h,q+2h]$. First, we observe that 
the unique periodic solution $p(t)$  of the ordinary differential equation $u'(t)= au(t) +k \sin (t+\varphi)$ has the form
$$
u(t)=-\frac{k}{\sqrt{a^2+1}}\sin(t+\varphi +\theta_0),\ \mbox{where} \ \theta_0: = \arcsin\frac{1}{\sqrt{a^2+1}}.
$$
Thus, integrating (\ref{x2}) on $[q,q+h]$, we easily find that 
\begin{equation}\label{u1}
u(t)= C_0\sin(t+\theta_0)+ C_1+ C_2e^{a(t-q)}, \ \mbox{where} \ C_0=\frac{1}{\sqrt{a^2+1}}, \ C_1= \frac{p-1}{a}, 
\end{equation}
$$C_2=p-C_1-C_0\sin(q+\theta_0).  
$$
Hence, solving (\ref{x2}) on $[q+h,q+2h]$ amounts to the integration of the linear inhomogeneous ordinary differential equation 
\begin{equation}
u'(t)= au(t) - (C_0\sin(t-h+\theta_0)+ C_1+ C_2e^{a(t-h-q)})+1 - \sin t, \ 
\label{x3}
\end{equation}
subject to the initial condition 
\begin{equation}\label{x4}
u(q+h)= -C_0\cos(q+\theta_0)+ C_1+ C_2e^{ah}=:C_3. 
\end{equation}
The solution of (\ref{x3}), (\ref{x4}) is given by  
\begin{equation} \label{sec}
u(t)= C^*_0\cos(t+2\theta_0)+C_0\sin(t+\theta_0)+ C^*_1- C_2(t-h-q)e^{a(t-h-q)}+ C^*_2e^{a(t-h-q)}, 
\end{equation}
where
$$
\ C^*_0=\frac{1}{a^2+1}, \ C^*_1= \frac{C_1-1}{a}, \ C_2^*= C_3- C^*_0\sin(q+2\theta_0)+C_0\cos(q+\theta_0)- C^*_1.$$

This implies that the first derivative $u'(t)$ is an analytic function of the variables $q \in [-0.5\pi, 0.4]$ and $t \in [q+h,q+2h]$:
$$
u'(t)= -C^*_0\sin(t+2\theta_0)+C_0\cos(t+\theta_0)+\left[-C_2a(t-h-q) + (C^*_2a- C_2)\right]e^{a(t-h-q)}.$$ 
Note also that
$$
u(q+2h)-u(q+h)= -C^*_0\cos(q+2\theta_0)-C_0\sin(q+\theta_0)+ C^*_1- C_2he^{ah}+ C^*_2e^{ah}-C_3.
$$ 
\begin{lemma} For all  $q \in [0.105, 0.4]$, $t \in [q+h,q+2h]$, it holds that $u'(t)>0$. Furthermore, $u(q+2h)-u(q+h)>0$
for all  $q \in [-0.12, 0.4]$. 
\end{lemma}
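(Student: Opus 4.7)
The plan is to verify both claims by quantitative estimation of the explicit analytic expressions derived immediately before the lemma, using the specific numerical values $a = 0.32$ and $h = 3\pi/2$. Both statements reduce to checking positivity of an explicit smooth function: the first is a function of $(q,t)$ on the compact rectangle $[0.105, 0.4] \times [q+h, q+2h]$, the second a function of $q$ alone on $[-0.12, 0.4]$. Since all coefficients $C_i, C_i^*$ have been written in closed form, the natural strategy is a mesh argument combined with a-priori gradient bounds.

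For the first assertion, I would introduce the shift $\tau := t - q - h \in [0, h]$ so that the formula for $u'(t)$ becomes $F(q,\tau) := S(q,\tau) + E(q,\tau)$, where $S(q,\tau) = -C_0^* \sin(q+h+\tau+2\theta_0) + C_0 \cos(q+h+\tau+\theta_0)$ is the sinusoidal part and $E(q,\tau) = [(C_2^* a - C_2) - a C_2 \tau]\, e^{a\tau}$ is the polynomial-times-exponential part. The numerical constants $C_0, C_0^*, \theta_0$ are absolute, while $C_2$ and $C_2^*$ are explicit smooth functions of $q$ alone with uniformly bounded derivatives on $[0.105, 0.4]$. I would then compute an explicit Lipschitz bound for $F$ on the rectangle $R = [0.105, 0.4] \times [0, h]$, and certify $F > 0$ by evaluating $F$ on a mesh whose spacing is smaller than (the minimum of $F$ on that mesh) divided by (the Lipschitz bound).

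For the second claim, substituting the formulas for $C_0, C_0^*, C_1, C_1^*, C_2, C_2^*, C_3$ into $u(q+2h) - u(q+h)$ yields an explicit analytic function $\Delta(q)$ built from $\sin(q + k\theta_0)$ and $\cos(q + k\theta_0)$ for $k = 0,1,2$, the quantity $p(q) = (1-\sin q)/d$ entering through $C_1, C_2$, and constants scaled by $e^{ah}$. Bounding $|\Delta'(q)|$ uniformly on $[-0.12, 0.4]$ and verifying $\Delta(q_j) > 0$ on a sufficiently fine partition $\{q_j\}$ of this interval yields the result by the same mean value argument as above.

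The main obstacle in both parts is \emph{tightness} rather than conceptual difficulty: the endpoints $0.105$ and $-0.12$ appear to lie near the actual zero sets of $u'$ and of $\Delta$ respectively, so the mesh must be refined near these endpoints and the numerical bounds kept sharp. However, since every function involved is analytic with explicitly computable derivatives, no hidden obstruction arises; the verification is simply laborious, in line with the paper's own remark that the appendix requires ``elementary but laborious evaluations of some auxiliary smooth functions on compact sets.''
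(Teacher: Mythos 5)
Your proposal is correct and follows essentially the same route as the paper: the appendix likewise substitutes $s=t-q-h$, reduces both claims to the positivity of explicit elementary functions ($\Psi(s,q)$ on $[0,1.5\pi]\times[0.105,0.4]$ and $u(q+2h)-u(q+h)$ on $[-0.12,0.4]$), and certifies this by computing their minima ($\approx 0.0086$ and $\approx 0.02057$). Your mesh-plus-Lipschitz scheme is simply a more explicit description of how such a verification is made rigorous, which the paper leaves implicit.
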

\begin{proof} It is convenient to introduce the new variable $s=t-q-h\in [0,h]=[0,1.5\pi]$. Then we have to evaluate the elementary function 
$$
\Psi(s,q)= C_0\sin(s+q+\theta_0)+ C^*_0\cos(s+q+2\theta_0)+\left[-C_2as + (C^*_2a- C_2)\right]e^{as}
$$
on the rectangle $\Pi= [0,1.5\pi]\times[0.105,0.4]$. Since $\min\{\Psi(s,q), \ (s,q) \in \Pi\} = 0.0086\dots$, the first assertion of the lemma is proved.
Similarly, the second conclusion follows from the computation of $\min\{u(q+2h)-u(q+h), \ q \in [-0.12, 0.4]\} \approx0.02057.$ 
 \qed
\end{proof}
Since the inequality $u'(q+h)>0$ guarantees that the only critical point of $u(t)$ on the interval $[q,q+h]$ is a minimum point and 
$[0,1.316] \subseteq [0,\tilde f(0.105)]$, we obtain the following result: 
 \begin{corollary}\label{Cor31}
 For all $p \in [0,1.316]$, the solution $u(t,p)$ is $U$-shaped on the interval $(q, \nu^*(q))$ (see Figure \ref{F42}).  Moreover, $\mu(q)-q< 3\pi$ so that ${\cal R}'(p+)<0$ for all 
 $p \in (\tilde f(q_0),1.316]$, where $q_0\approx1.1845$ is computed in Example \ref{example24}. 
 \end{corollary}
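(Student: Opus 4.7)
The statement has three parts: (i) $u(t,p)$ is $U$-shaped on $(q,\nu^*(q))$ for every $p\in[0,1.316]$; (ii) $\mu(q)-q<3\pi$; and (iii) ${\cal R}'(p+)<0$ on $(\tilde f(q_0),1.316]$. Writing $p=\tilde f(q)$, the range $p\in[0,1.316]$ corresponds to $q\in[0.105,\beta]$. The plan is to dichotomize at the threshold $\beta_1\approx 0.39289$ from Example~\ref{example24}: for $q\in(\beta_1,\beta)$ condition (\ref{fs}) holds and Lemma~\ref{LL22} delivers the $U$-shape immediately, whereas for $q\in[0.105,\beta_1]$ the preceding appendix lemma has to be exploited.

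In the remaining case $q\in[0.105,\beta_1]$, (\ref{fs}) fails, so the analysis preceding Lemma~\ref{LL22} forces $u(t)<p$ throughout $(q,q+h]$; hence $U(t)\equiv p$ there and $u$ satisfies the ODE $u'=au+bp+f$ on $[q,q+h]$ with explicit form (\ref{u1}). Since $u'(q)=0$ and $u''(q)=f'(q)<0$, the solution decreases immediately after $q$; the at-most-one-critical-point observation on $(q,q+h)\cap(q,T]=(q,q+h)$ recalled in the main text, together with the strict positivity $u'(t)>0$ on $[q+h,q+2h]$ supplied by the preceding appendix lemma, forces $u$ to possess a unique interior minimum $t_m\in(q,q+h)$ and to be strictly increasing on $[t_m,q+2h]$. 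By Lemma~\ref{L1} and the boundedness of Lemma~\ref{Lem7}, $\nu^*(q)$ exists; because $u$ is strictly monotone past $t_m$ (first in the DDE regime, then in the ODE regime $u'=(a+b)u+f$, where $u$ stays below $\tilde f$ until $\nu^*$), no other critical point appears on $(q,\nu^*(q))$, yielding the $U$-shape. Furthermore, the auxiliary function $g(t):=u(t)-u(t-h)$ is strictly increasing on $[q+h,t_m+h]$ (difference of a strictly increasing and a strictly decreasing function), with $g(q+h)=u(q+h)-p<0$ and $g(t_m+h)=u(t_m+h)-u(t_m)>0$, so the transition $\mu(q)$ lies in $(q+h,t_m+h)\subset(q+h,q+2h)$, proving (ii).

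For (iii), Theorem~\ref{T23} handles $p\in(\tilde f(q_0),\tilde f(\beta_1))$ directly, and Theorem~\ref{T25} dispatches $p\in[\tilde f(\beta_1),1.316]$ once $\Delta(q)<0$ is verified on the relevant $q$-range. Remark~\ref{R27} supplies this, since the bound $3\pi/2<\mu(q)-q<3\pi$ established above places $\mu(q)-q$ well inside the interval $(\alpha_*,\beta_*)\approx(1.2,12.11)$ on which the auxiliary function $V$ is negative. The main obstacle in the plan is the pre-$q+h$ analysis for $q\in[0.105,\beta_1]$: converting the preceding lemma's purely numerical derivative-sign bound into the qualitative structural statement of a single interior minimum requires a careful combination with the one-critical-point observation prepared in the paragraph preceding Lemma~\ref{LL22}.
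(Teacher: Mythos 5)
Your proposal is correct and follows essentially the same route as the paper: the appendix lemma's bound $u'(t)>0$ on $[q+h,q+2h]$ combined with the at-most-one-critical-point analysis preceding Lemma~\ref{LL22} gives the $U$-shape (with Lemma~\ref{LL22} covering $q\in(\beta_1,\beta)$), the sign change of $u(t)-u(t-h)$ on $(q+h,q+2h)$ gives $\mu(q)-q<3\pi$, and Remark~\ref{R27} together with Theorems~\ref{T23} and \ref{T25} gives ${\cal R}'(p+)=\Delta(q)<0$. Your monotonicity argument for $g(t)=u(t)-u(t-h)$ is a slightly more detailed variant of the paper's direct use of the inequality $u(q+2h)-u(q+h)>0$, but the content is the same.
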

 
  \begin{figure}[htb]
\centering {\includegraphics[width=6.9cm]{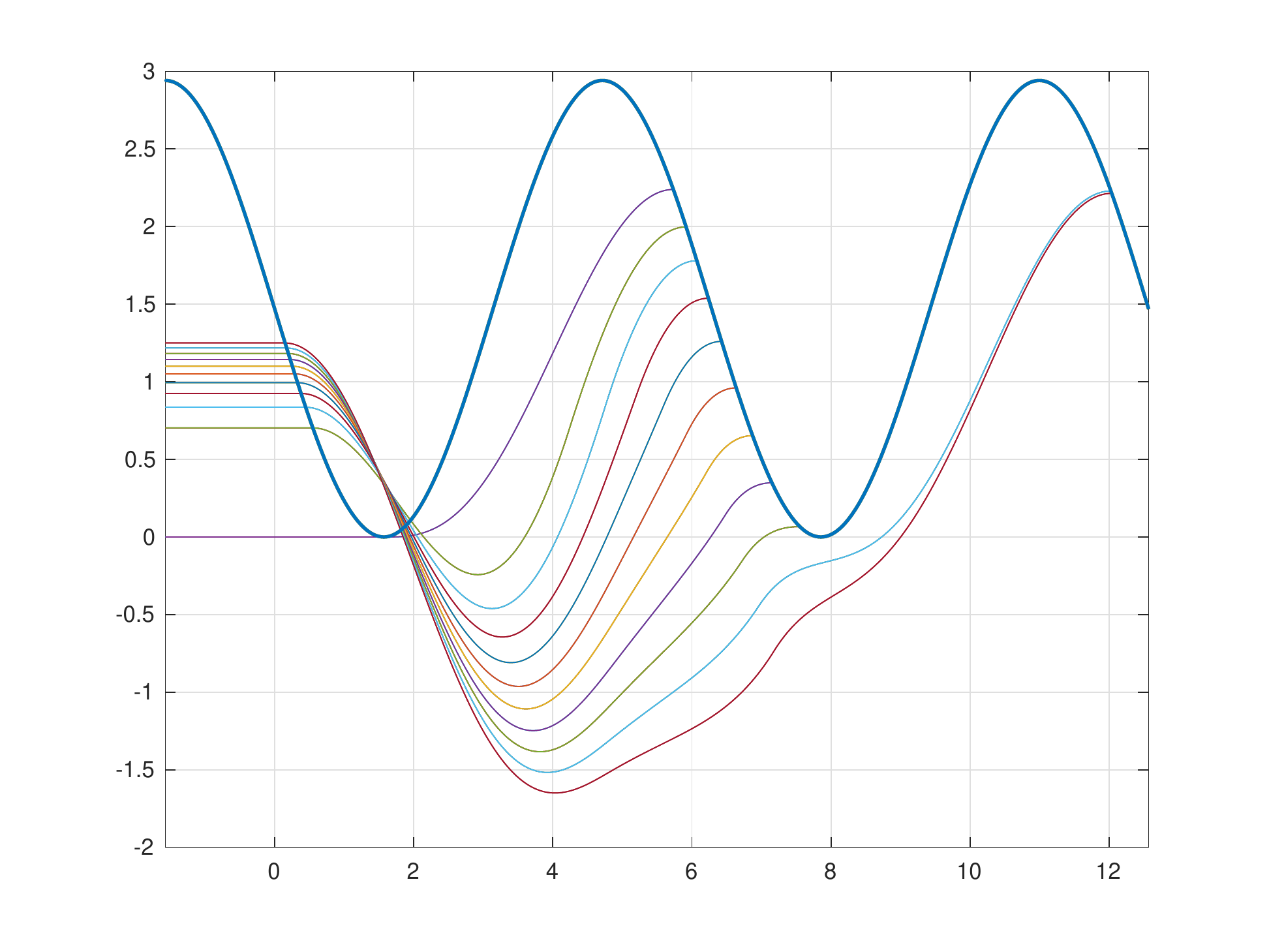}}{\includegraphics[width=6.9cm]{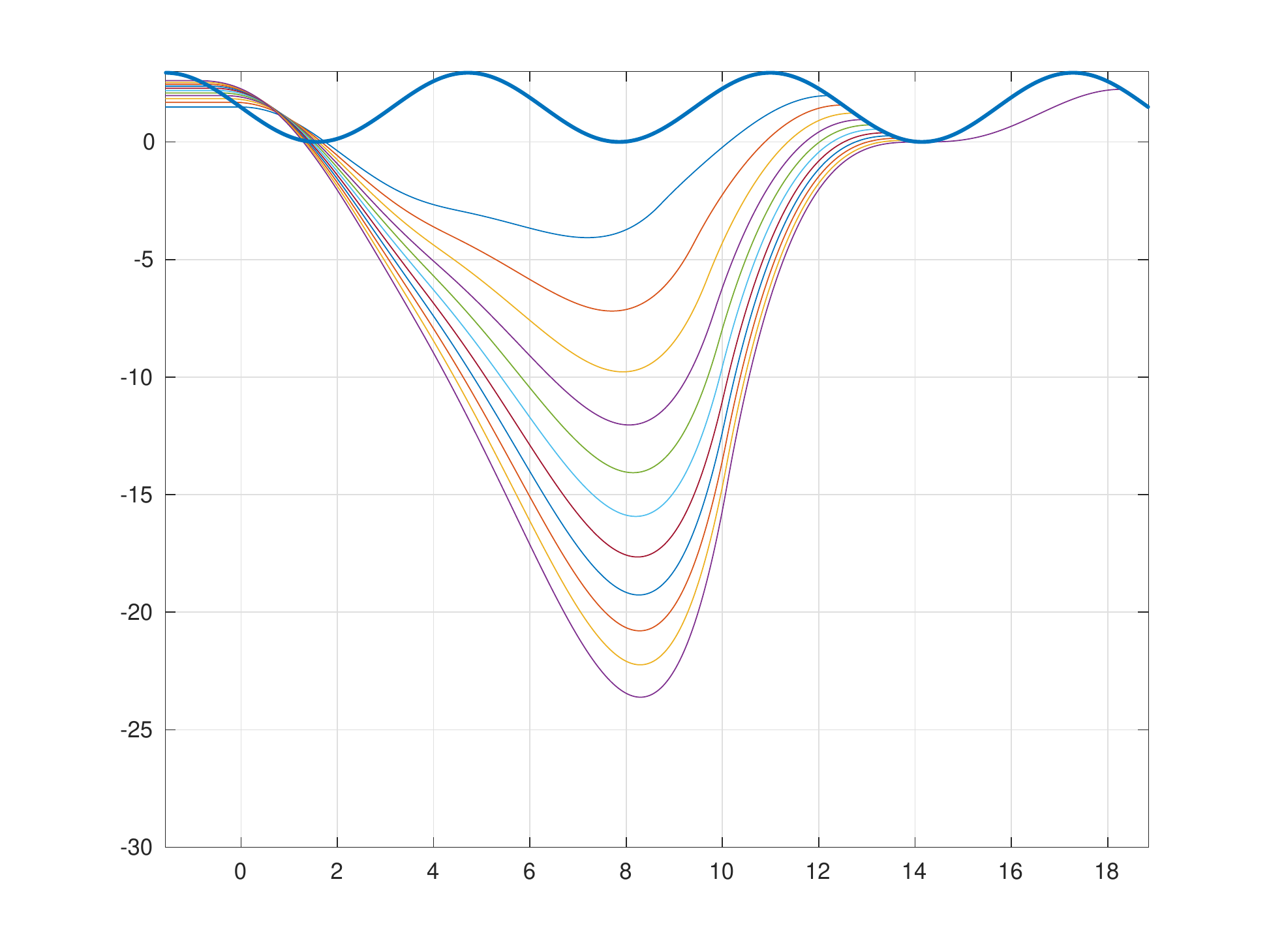}}
\caption{\hspace{0cm}  Graphs of solutions $u=u(t,p_j)$ on  $t \in [q_j,\nu^*(q_j)]$ together with the sine-like function $\tilde f$. On the left:   $p_j=\tilde f(q_j)=0.125\sqrt[4]{1000j}, \ j =0, \dots,10$.  On the right:  $p_j=\tilde f(q_j) =1+0.125\sqrt{15i}, \ i =1, \dots 11.$ } 
 \label{F42}
\end{figure} 
\begin{lemma}  \label{L32} The graph of 
 $u(t,p)$ does not contain the point $(2.5\pi,0)$ and   condition {\rm({\bf M})} is satisfied whenever $p \in {\cal I}= [1.26, 2/0.68]$. 
\end{lemma}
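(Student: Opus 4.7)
The proof follows the template of the preceding lemma in the Appendix: integrate (\ref{x2}) step by step on $[q, q+h]$ and $[q+h, q+2h]$ using the explicit formulas (\ref{u1}) and (\ref{sec}), then verify the desired strict inequalities on compact rectangles in $(s, q)$ by elementary numerical enclosure (a fine grid evaluation combined with a uniform derivative or Lipschitz bound).

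First I would translate $p \in {\cal I} = [1.26,\, 2/0.68]$ into the $q$-variable: from $p = (1-\sin q)/0.68$ one gets $q \in Q := [-\pi/2, \tilde q]$, where $\tilde q \in (0, \pi/2)$ is the unique solution of $(1-\sin \tilde q)/0.68 = 1.26$ (so $\tilde q \approx 0.144$). Since $\tilde q < \pi = 2.5\pi - h$, the instant $t = 2.5\pi$ always lies in the second step-interval $[q+h,\, q+2h]$ for every $q \in Q$, so (\ref{sec}) represents $u(2.5\pi, p)$ as an explicit real-analytic function $\Phi(q)$ on $Q$. The first claim then reduces to $\Phi(q) \neq 0$ on $Q$, which I would establish exactly as in the previous lemma by computing $\min_{q \in Q} |\Phi(q)|$ from a finite-grid evaluation together with a uniform bound on $|\Phi'(q)|$.

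For condition~({\bf M}), fix $p \in {\cal I}$ and the corresponding $q \in Q$. Inspecting (\ref{u1}) and (\ref{sec}) locates the unique minimum of $u(\cdot, p)$ on $[q, q+h]$ and the unique local maximum on $[q+h, q+2h]$; Claim~1 guarantees that $u(2.5\pi, p)$ is bounded away from zero, which together with Lemma~\ref{Lr} forces $\nu^*(q)$ to lie in one fixed branch inside $[q+h, q+2h]$ and to depend analytically on $q \in Q$. Condition~({\bf M}) then amounts to the strict inequality $u(\nu^*(q), p) - u(s, p) > 0$ for all $s \in [\nu^*(q) - h, \nu^*(q))$. I would decompose this window into its intersections with $[q-h, q]$ (on which $u \equiv p$), with $[q, q+h]$, and with $[q+h, \nu^*(q))$, substitute for $u$ on each piece its explicit formula from (\ref{u1}) or (\ref{sec}), and verify the inequality on each resulting compact rectangle in $(s, q)$ by the same enclosure procedure.

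The main obstacle is the second claim: one must keep careful track of which step-formula applies on each sub-piece of the window, and control the strict inequality near those $q \in Q$ where $u(\nu^*, p)$ is small. The choice of the sub-interval ${\cal I}$ is exactly what isolates a range of $p$ on which $\nu^*(q)$ varies continuously in $q$ — the cusp catastrophe at $t = 2.5\pi$ being precluded by Claim~1.
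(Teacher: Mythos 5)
There is a genuine gap in your treatment of the second step interval. You assert that, because $2.5\pi\in[q+h,q+2h]$, formula (\ref{sec}) represents $u(2.5\pi,p)$ exactly as a real-analytic function of $q$, and you propose to verify both claims by enclosure of that exact expression. But (\ref{sec}) solves the auxiliary problem (\ref{x3})--(\ref{x4}), in which the functional $\max_{s\in[t-h,t]}u(s)$ has been replaced by the pure delay term $u(t-h)$ with $u$ on $[q,q+h]$ given by (\ref{u1}). On the first step this replacement is legitimate because there $U(t,p)=p$; on $[q+h,q+2h]$ it is not: the first-step solution decreases to a unique minimum and then increases, so once $t-h$ passes that minimum the maximum over the window $[t-h,t]$ is no longer attained at the left endpoint, the governing equation switches branches (eventually to (\ref{ode})), and (\ref{sec}) ceases to coincide with $u(t,p)$. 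Computing $\min_q|\Phi(q)|$ for the function defined by (\ref{sec}) therefore proves nothing about $u(2.5\pi,p)$. The same defect undermines your plan for {\rm({\bf M})}, which moreover presumes that $\nu^*(q)$ lies in one branch inside $[q+h,q+2h]$ and depends analytically on $q$ --- in fact $\nu^*(q)$ may lie beyond $q+2h$, and the paper never needs to locate it.

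The paper's proof instead uses a one-sided comparison that exploits the sign $b=-1<0$: since $U(t,p)\ge u(t-h,p)$, the true solution obeys the differential inequality $u'(t,p)\le 0.32\,u(t,p)-u(t-h,p)+1-\sin t$ on $[q+h,q+3\pi]$, whence $u(t,p)\le u(t)$ with $u(t)$ given by (\ref{sec}). It then checks that this majorant is negative, namely $\Phi(s,q)<0$ on the compact set $\Pi_2$, and that the exact first-step solution is negative on $[\pi,q+h]$, concluding $u(t,p)<0$ on all of $[\pi,2.5\pi]$ for $q\in[-0.5\pi,0.15]$. This upper bound --- not an exact value --- already yields both assertions: the graph misses $(2.5\pi,0)$ because $u(2.5\pi,p)<0$, and {\rm({\bf M})} follows because ${\cal R}(p)=u(\nu^*,p)>0$ while $u$ is negative on the whole length-$h$ interval $[\pi,2.5\pi]$. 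Your enclosure machinery is appropriate, but it must be applied to the majorant together with the comparison argument, not to a formula that has not been shown to represent the solution.
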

\begin{proof}  First, note that $u(t,p)=u(t), \ t \in [q,q+1.5\pi],$ for all $p \in {\cal I}$.  In particular, $u(t)$ has a unique critical point (global minimum 
point) on the interval $(q,q+1.5\pi]$  so that  $u(t) <0$ on $[\pi, q+1.5\pi]$ if $u(\pi) <0$ and $u(q+1.5\pi) =C_3 <0$. It is easy to check that these inequalities 
hold for all $q \in [-0.5\pi, 0.15]$. 

Next, for all $t\in [q+1.5\pi, q +3\pi]$, we find that 
$$
u'(t,p)= 0.32u(t,p)-U(t,p) +1-\sin t \leq 0.32u(t,p)-u(t-h,p) +1-\sin t. 
$$
Thus a standard comparison argument shows that $u(t,p) \leq u(t)$,   $t\in [q+1.5\pi, q +3\pi]$, where $u(t)$ is given by (\ref{sec}). 
Now, setting  $s=t-q-h\in [0,h]=[0,1.5\pi]$, we present $u(t)$ as
$$
\Phi(s,q)=  C^*_0\sin(s+q+2\theta_0)-C_0\cos(s+q+\theta_0)+C^*_1+\left[-C_2s + C^*_2\right]e^{as}. 
$$
Then the inequality $u(t)<0,\ t \in [q+1.5\pi, 2.5\pi],$ holds for all $q \in [-0.5\pi, 0.15]$ if 
$
\Phi(s,q) <0  
$
on the set $\Pi_2=\{(s,q): s+q\leq \pi, q \in [-0.5\pi, 0.15], s \geq 0\}$. Now,  we find that 
$$\max\{\Phi(s,q) , \ (s,q) \in \Pi_2\}= -0.0615\dots<0. $$
Thus $u(t,p) <0$ for all $t\in[\pi, 2.5\pi]$ whenever $q \in [-0.5\pi, 0.15]$. This proves  the first assertion of the lemma. 
Finally, since ${\cal R}(p)>0$,  condition ({\bf M}) is satisfied for each $p \in [\tilde f(0.15), \tilde f(-\pi/2)] = [1.25\dots, 2/0.68]$. \qed
\end{proof}

Now we are in a position to prove Theorem \ref{T24}.
\begin{proof}[of Theorem \ref{T24}] Since the computation of ${\cal R}(\bar p)$ for each given $\bar p = \tilde f(\bar q) \in K$ amounts to the explicit integration of  some first order inhomogeneous  linear differential equations with constant coefficients on a finite interval $[\bar q, \nu^*(\bar q)]$, and founding zeros of simple elementary functions on the same interval,  we will assume  that the value of ${\cal R}(\bar p)$ can be found with the required accuracy. For example,  the value of ${\cal R}(0)\approx2.2$ can be found by solving the equation $u(t)=\tilde f(t)$ on the interval $[1.5\pi, 2.5\pi]$, where $u(t)$ is given by (\ref{u1}). In a similar way, we can compute the value of  ${\cal R}({\cal R}(0)) \approx0.45.$

Next, Corollary \ref{Cor31} and Remark \ref{R27} allow to apply Theorem \ref{T25} on the $q$-interval  $(\alpha, \beta]=(0.105, 0.5\pi]$. In order to prove that the associated   $p$-interval  $[\tilde f(\beta), \tilde f(\alpha))=[0, 1.3\dots)$ contains  one point $p_1$ of discontinuity, it suffices to  take $q$ such that $\tilde f(q)=1.25$ and to check that  
$\nu^*(q) \in (3.5\pi,4.5\pi)$ (this $q$ corresponds to $q_{10}$ in   the left frame of  Figure~\ref{F42}). Invoking also Example \ref{example32},  we establish all stated properties of ${\cal R}$ on the interval $[0,1.316]$. Concerning the computation of the approximate value of $p_1$, note that  $p_1 \in (a_1,a_2) \subset [0,1.316]$ if ${\cal R}(a_1)< {\cal R}(a_2)$ (particularly, we obtain immediately that $p_1 \in (0.9,1.25)$ while a more accurate similar estimate implies that $p_1\approx1.1$). 

Finally,  Lemma \ref{L32} shows that condition ({\bf M}) is satisfied for all $p \in {\cal I}= [1.26, 2/0.68]$. Then Theorem \ref{T15} and  the proof of Corollary \ref{Cor20} imply that the restriction  ${\cal R}: [p_1,p_2)\to K$ has continuous graph 
until the  first eventual intersection of its closure with the real axis at some point $p_2$, where
${\cal R}(p_2 -) = 0, \ {\cal R}(p_2) = {\cal R}(0) >0$. In order to establish the existence of such $p_2$ and find its approximate value, 
it is enough to take $p^*_i=\tilde f(q_i^*) =1+0.125\sqrt{15k} \in \{2.53\dots, 2.60 \dots\},$  with $k=10,11$, and to note that $\nu^*(q_{11}^*)\in (5.5\pi,6.5\pi)$ while $\nu^*(q_{10}^*) \in (3.5\pi,4.5\pi)$, see  the right frame of Figure~\ref{F42}. In particular, this shows that $p_2>2.53 > {\cal R}(0)$. \qed

\end{proof}

\section*{Acknowledgments} We dedicate this work to the memory of  our colleague Anatoly Samoilenko (1938-2020),  
one of the most influential Soviet and Ukrainian experts in the field of ordinary differential equations (cf. \cite[Sections 2.43: V.I. Arnold and 2.49: A.M. Samoilenko]{MMM}) and beloved professor  and doctoral adviser of the first and third authors. In fact, our initial interest in model (\ref{meq}) was motivated by an approach to this equation based on Samoilenko's numerical-analytic method \cite{RST,SB}.

We express our appreciation to  Rafael Ortega for suggesting the present simple proof of Lemma \ref{L2}. We also thank   {\mL}ubom\'ir Snoha and Hans-Otto Walther for valuable discussions and suggestions. We are  indebted to Alexander Rezounenko for providing  the monograph \cite{Mago}, and to Hugo Huijer for his kind permission to reproduce his 2020 Happiness review,  whose original can be found in  \cite{th}. 

S. Trofimchuk  was   partially  supported by FONDECYT (Chile),   project 1190712, and   E. Liz by  the research
grant MTM2017--85054--C2--1--P (AEI/FEDER, UE).

\end{document}